\documentclass[12pt]{amsart}
\usepackage{url}
\usepackage{graphicx,amsmath,amssymb,url,pgf, tikz}
\usepackage[all]{xy}
\usepackage{enumitem}
\DeclareGraphicsRule{.tif}{png}{.png}{`convert #1 `dirname #1`/`basename #1 .tif`.png}

\usepackage[margin=3.5cm]{geometry}

\usepackage{amsthm}
\usepackage{todonotes}

\numberwithin{equation}{section}
\newtheorem{prop}{Proposition}[section]
\newtheorem{thm}[prop]{Theorem}
\newtheorem{cor}[prop]{Corollary}
\newtheorem{lem}[prop]{Lemma}

\newtheorem{open}{Open Problem}

 \theoremstyle{definition}
 \newtheorem{defn}[prop]{Definition} 

 \theoremstyle{definition}
 \newtheorem{remark}[prop]{Remark} 

 \theoremstyle{definition}
 \newtheorem{algorithm}[prop]{Algorithm} 

 \theoremstyle{definition}
 \newtheorem{example}[prop]{Example} 

 \newcommand{\lessthan}{\prec}
 \newcommand{\MacdonaldMap}{M}
 \newcommand{\TransitionMap}{T_{\pi}}
 \newcommand{\BoundedPairTransMap}{BT_{\pi}}
 \newcommand{\RT}{RT_{\pi}}
 
\newcommand{\Bpoly}{\mathfrak{F}} 
\newcommand{\SpecializedBpoly}{f} 

\newcommand{\codes}{\mathcal{C}}
\newcommand{\comaj}{\operatorname{comaj}}
\newcommand{\given}{\ : \ }

\newcommand{\pic}{\begin{tikzpicture}}
\newcommand{\epic}{\end{tikzpicture}}

\newcommand{\Z}{\mathbb{Z}}

\DeclareMathOperator{\id}{id}
\DeclareMathOperator{\Inv}{Inv}
\DeclareMathOperator{\BoundedPairs}{BoundedPairs}
\DeclareMathOperator{\rpp}{rpp}

\newcommand{\Tset}{\mathcal{U}}
\newcommand{\Xset}{\mathcal{X}}
\newcommand{\Yset}{\mathcal{Y}}

\newcommand{\bfa}{\mathbf{a}}
\newcommand{\bfb}{\mathbf{b}}

\newcommand{\deleted}{\mathsf{deleted}}
\newcommand{\bumped}{\mathsf{bumped}}
\newcommand{\outcome}{\mathsf{outcome}}

\newcommand{\dvalue}{k}
\newcommand{\rp}{\mathcal{RP}}
\newcommand{\cd}{cD}

\newcommand{\defect}{\mathrm{Defect}}

\renewcommand{\d}{\epsilon}

\usepackage{mathrsfs} 

\newcommand{\Push}{\mathscr{P}}
\newcommand{\Bump}{\mathscr{B}}
\newcommand{\Delete}{\mathscr{D}}
\newcommand{\Insert}{\mathscr{I}}


\newcommand{\q}{\mathfrak{q}}

%
%
\setlength{\unitlength}{0.06em}
\newlength{\cellsize} \setlength{\cellsize}{18\unitlength}
\newsavebox{\cell}
\sbox{\cell}{\begin{picture}(18,18)
\put(0,0){\line(1,0){18}}
\put(0,0){\line(0,1){18}}
\put(18,0){\line(0,1){18}}
\put(0,18){\line(1,0){18}}
\end{picture}}
\newcommand\cellify[1]{\def\thearg{#1}\def\nothing{}%
\ifx\thearg\nothing
\vrule width0pt height\cellsize depth0pt\else
\hbox to 0pt{\usebox{\cell} \hss}\fi%
\vbox to \cellsize{
\vss
\hbox to \cellsize{\hss$#1$\hss}
\vss}}
\newcommand\tableau[1]{\vtop{\let\\\cr
\baselineskip -16000pt \lineskiplimit 16000pt \lineskip 0pt
\ialign{&\cellify{##}\cr#1\crcr}}}
%

\usepackage{bbm}

\DeclareSymbolFont{bbsymbol}{U}{bbold}{m}{n}
\DeclareMathSymbol{\Fact}{\mathbin}{bbsymbol}{"21}

\newcommand{\elbows}{\mathbin{\text{\rotatebox[origin=c]{45}{$\asymp$}}}}

\begin{document}


\title{A bijective proof of Macdonald's reduced word formula}
\date{\today}

\author[Billey]{Sara C.~Billey}
\address{Sara Billey, Department of Mathematics,  University of Washington, Box 354350, Seattle, WA 98195, USA}
\thanks
{Billey was partially supported by grant DMS-1101017 from
the NSF} \email{billey@math.washington.edu}

\author[Holroyd]{Alexander E.~Holroyd}
\address{Alexander E.~Holroyd, Microsoft Research, 1 Microsoft Way, Redmond, WA 98052, USA}
\email{holroyd@microsoft.com}

\author[Young]{Benjamin Young}
\address{Benjamin Young, Department of Mathematics, 1222 University of Oregon, Eugene, OR 97403, USA}
\email{bjy@uoregon.edu}

\begin{abstract}
  We give a bijective proof of Macdonald's reduced word
  identity using pipe dreams and Little's bumping
  algorithm. This proof extends to a principal
  specialization due to Fomin and Stanley.  Such a proof
  has been sought for over 20 years.  Our bijective tools
  also allow us to solve a problem posed by Fomin and
  Kirillov from 1997 using work of Wachs, Lenart, Serrano
  and Stump.  These results extend earlier work by the
  third author on a Markov process for reduced words of
  the longest permutation.
\end{abstract}

\maketitle

\section{Introduction}\label{s:intro}

Macdonald gave a remarkable formula connecting a weighted
sum of reduced words for a permutation $\pi$ with the
number of terms in a Schubert polynomial
$\mathfrak{S}_\pi(x_1,\ldots, x_n)$.  For a permutation
$\pi\in S_n$, let $\ell(\pi)$ be its inversion number and
let $R(\pi)$ denote the set of its reduced words. (See
Section 2 for definitions.)

\begin{thm}[Macdonald {\cite[(6.11)]{M2}}]  \label{t:macdonald}
  Given a permutation $\pi \in S_n$ with $\ell(\pi)=p$, one has
\begin{equation}\label{e:macdonald.formula}
\sum_{(a_1,a_2,\ldots, a_p) \in R(\pi)} a_1\cdot a_2 \cdots a_p \ = p! \
\mathfrak{S}_\pi(1,\ldots, 1).
\end{equation}
\end{thm}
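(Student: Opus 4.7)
The plan is to prove Macdonald's identity bijectively by giving the two sides of \eqref{e:macdonald.formula} independent combinatorial interpretations and then constructing an explicit bijection between them via Little's bumping algorithm.

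First I interpret the right-hand side. Using the classical fact that $\mathfrak{S}_\pi(1,\ldots,1)$ equals the number of reduced pipe dreams for $\pi$, the quantity $p!\cdot\mathfrak{S}_\pi(1,\ldots,1)$ counts pairs $(D,\tau)$ consisting of a reduced pipe dream $D$ for $\pi$ together with a bijective labeling $\tau:\mathrm{crosses}(D)\to[p]$, i.e., a total order on the $p$ crosses of $D$. Next I reinterpret the left-hand side. For each reduced word $\mathbf{a}=(a_1,\ldots,a_p)\in R(\pi)$, the product $a_1 a_2\cdots a_p$ counts the sequences $\mathbf{b}=(b_1,\ldots,b_p)$ with $1\le b_i\le a_i$. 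Thus the left-hand side is the cardinality of the set of \emph{bounded pairs} $(\mathbf{a},\mathbf{b})$ for $\pi$, which I would denote $\BoundedPairs(\pi)$.

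The bijection, call it $\MacdonaldMap$, is constructed recursively by peeling crosses from an ordered pipe dream one at a time. Given $(D,\tau)$, locate the cross $c$ with label $\tau(c)=p$ and apply Little's bumping procedure to slide it out of the diagram; this produces a smaller ordered pipe dream $(D',\tau')$ of size $p-1$, together with two recorded statistics: the column on which the exiting pipe emerges, which I set to be $a_p$, and the row in which Little's algorithm terminates, which I set to be $b_p$. Iterating yields the desired bounded pair $(\mathbf{a},\mathbf{b})$. For the inverse I reverse the procedure: given $(\mathbf{a},\mathbf{b})$, read entries from right to left, inserting a new cross guided by $a_i$ and $b_i$ and applying the reverse Little bump to restore a valid pipe dream at each step. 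Crucially, the inequality $b_i\le a_i$ should be exactly the condition needed to guarantee that a legal insertion site exists. The principal specialization identity of Fomin--Stanley should then follow by refining the construction so as to track which rows the crosses occupy, producing a $q$-weighted version of the same bijection.

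The main obstacle is showing that the forward and reverse maps are well-defined and mutually inverse. Little's algorithm can alter the underlying permutation during intermediate stages, and although the net effect on the cross data is controlled, verifying this requires careful bookkeeping of intermediate states. I expect the key technical lemma to be an invariance statement: that a single Little bump, combined with the recorded statistic $b_i$, defines an involution on a suitable enlarged set of (ordered pipe dream, partial bounded pair) configurations. Once this is in place, the recursive construction should assemble into a bijection $\MacdonaldMap:\{(D,\tau)\}\to\BoundedPairs(\pi)$ and Theorem~\ref{t:macdonald} follows by comparing cardinalities.
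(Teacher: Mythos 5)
Your high-level setup agrees with the paper: both sides of \eqref{e:macdonald.formula} are given combinatorial interpretations (the left as bounded pairs $(\mathbf{a},\mathbf{b})$, the right as reduced pipe dreams times a $p!$-sized auxiliary set), and the bijection is to be built from Little's bumping. But the specific recursion you propose diverges from the paper's and, as stated, has real gaps.

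The central difficulty is with your ``peel the cross labeled $p$'' step. In the paper the crossing that gets pushed is always the one carrying the lex-largest inversion $(r,s)$ of $\pi$, a choice that depends only on $\pi$ and makes the result land in a controlled set (a pipe dream for either $\nu=\pi t_{rs}$, when the crossing is deleted, or some $\nu t_{qr}$ with $q<r$, when the push merely bumps). Crucially, in the bumped case the number of crossings does \emph{not} decrease; the recursion is on the permutation in inversion order (Remark~\ref{rem:lex.largest.inversion.order}), not on $|D|$. Your scheme removes whichever crossing $\tau$ happens to label $p$, so the intermediate permutation is arbitrary, and there is no guarantee that a single Little bump discards a crossing at all rather than simply rearranging them. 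Furthermore, during a bump crossings slide to new positions, so it is not even well-defined how $\tau$ should ``restrict'' to give $\tau'$.

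There is also a mismatch in what you record. You set $a_p$ to ``the column on which the exiting pipe emerges'' and $b_p$ to ``the row in which Little's algorithm terminates,'' but in a bounded pair $(\mathbf{a},\mathbf{b})$ the entries $a_i$ are letters of a reduced word (rows of crossings in a wiring diagram), not column indices in a pipe dream, and the constraint $1\le b_i\le a_i$ must emerge from the construction. The paper obtains this by running the bounded bumping algorithm on the pipe dream encoded as the biword $(\mathbf{r}_D,\mathbf{j}_D)$, so that the column word $\mathbf{j}_D$ is \emph{already} a bounded word for $\mathbf{r}_D$ and the algorithm preserves that property (Lemma~\ref{l:stack.push}). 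Your scheme gives no analogous reason why the recorded quantities should form a bounded pair, and the ``invariance statement'' you anticipate as the key lemma is not precise enough to evaluate. The actual key lemmas in the paper are the two parallel bijective Transition Equations --- Theorem~\ref{t:transitionA} for pipe dreams (Algorithm~\ref{algorithm:TransitionMap}) and Theorem~\ref{t:transitionBP} for bounded pairs (Algorithm~\ref{algorithm:bt}) --- whose outputs are stitched together by the sub-staircase word $\mathbf{c}$, which records the column index of the final push at each recursive step. The $\mathbf{c}$ encoding, rather than a bijective labeling $\tau$, is what makes the two recursions mesh; replacing it by a free labeling is not a cosmetic change but removes the mechanism that lets you invert.
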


For example, the permutation $[3,2,1] \in S_3$ has 2
reduced words, $R([3,2,1])=\{(1,2,1), (2,1,2)\}$.  The
inversion number is $\ell([3,2,1])=3$, and the Schubert
polynomial $\mathfrak{S}_{\pi}(x_1,x_2,x_3)$ is the single
term $x_1^2x_2$. We observe that Macdonald's formula holds:
$1\cdot 2\cdot 1 + 2 \cdot 1 \cdot 2 =3!\cdot 1$.

In this paper, we give a bijective proof of
Theorem~\ref{t:macdonald}. Such a proof has been sought for
over 20 years.  It has been listed as an open problem in
both \cite{Fomin-Kirillov} and \cite{Stanley.perms}. Fomin
and Sagan have stated that they have a bijective proof, but
that it is unpublished due to its complicated nature; see
\cite{Fomin-Kirillov}.  Moreover, we give several
generalizations as discussed below.  Our proof builds on
the work of the third author on a Markov process on reduced
words for the longest permutation \cite{BY2014}.

The Schubert polynomial $\mathfrak{S}_\pi$ can be expressed
as a sum over \emph{reduced pipe dreams} (or \emph{RC
graphs}) corresponding to $\pi$, and its evaluation at
$(1,\ldots,1)$ is simply the number of such pipe dreams.
(See Section 2 for definitions, and
\cite{LS1,BJS,Fomin-Stanley,FK,billey-bergeron} for history
and proofs.) Thus, the right side of
\eqref{e:macdonald.formula} is the number of pairs
$(\mathbf{c},D)$, where $\mathbf{c}=(c_1,\ldots, c_p)$ is a
word with $1 \leq c_i \leq i$ for each $i$, and $D$ is a
pipe dream for $\pi$.  A word $c$ with this property is
sometimes called a \emph{sub-staircase word}.  The left
side is the number of pairs $(\mathbf{a},\mathbf{b})$ where
$\mathbf{a}\in R(\pi)$ and $\mathbf{b}$ is a word
satisfying $1\leq b_i\leq a_i$ for each $i=1,\ldots,p$. Our
bijection is between pairs $(\mathbf{a},\mathbf{b})$ and
$(\mathbf{c},D)$ that satisfy these conditions.  The
bijection and its inverse are presented in the form of
explicit algorithms.  Moreover, both maps are uniform over
the permutation $\pi$ in the sense that they have natural
descriptions that explicitly involve only
$(\mathbf{a},\mathbf{b})$ (respectively, $(\mathbf{c},D)$),
and not $\pi$ (although of course $\pi$ can be recovered
from $\mathbf{a}$ or
$D$).
Indeed, if we
interpret permutations $\pi\in S_n$ as permutations of $\Z$
that fix all but finitely many elements, then our maps do
not even explicitly involve $n$.

The outline of the bijection is quite simple given some well-known
properties of Schubert polynomials, together with the \emph{bumping
algorithm} for reduced words.  The bumping algorithm is an important
tool for studying reduced words, originally introduced and developed by
Little \cite{little2003combinatorial} and further studied by Garsia in
\cite{saga}. These properties and objects will be defined in
Section~\ref{s:background}.

In the first step, we give a modification of Little's
bumping algorithm that also acts on pipe dreams, and use it
to give a bijective interpretation to the
Lascoux-Sch\"utzenberger transition equation for Schubert
polynomials.  Essentially the same construction has been
given by Buch \cite[p.11]{Knutson.2012}. The key idea is to
iteratively apply the corresponding transition map to $D$
until we reach the empty pipe dream, while recording a
sequence of instructions that encode which
inversions/insertions are needed in order to reverse the
process. We call the resulting sequence a transition chain,
denoted $Y(D)$.

\begin{figure}
\begin{center}
  \includegraphics[width=\textwidth]{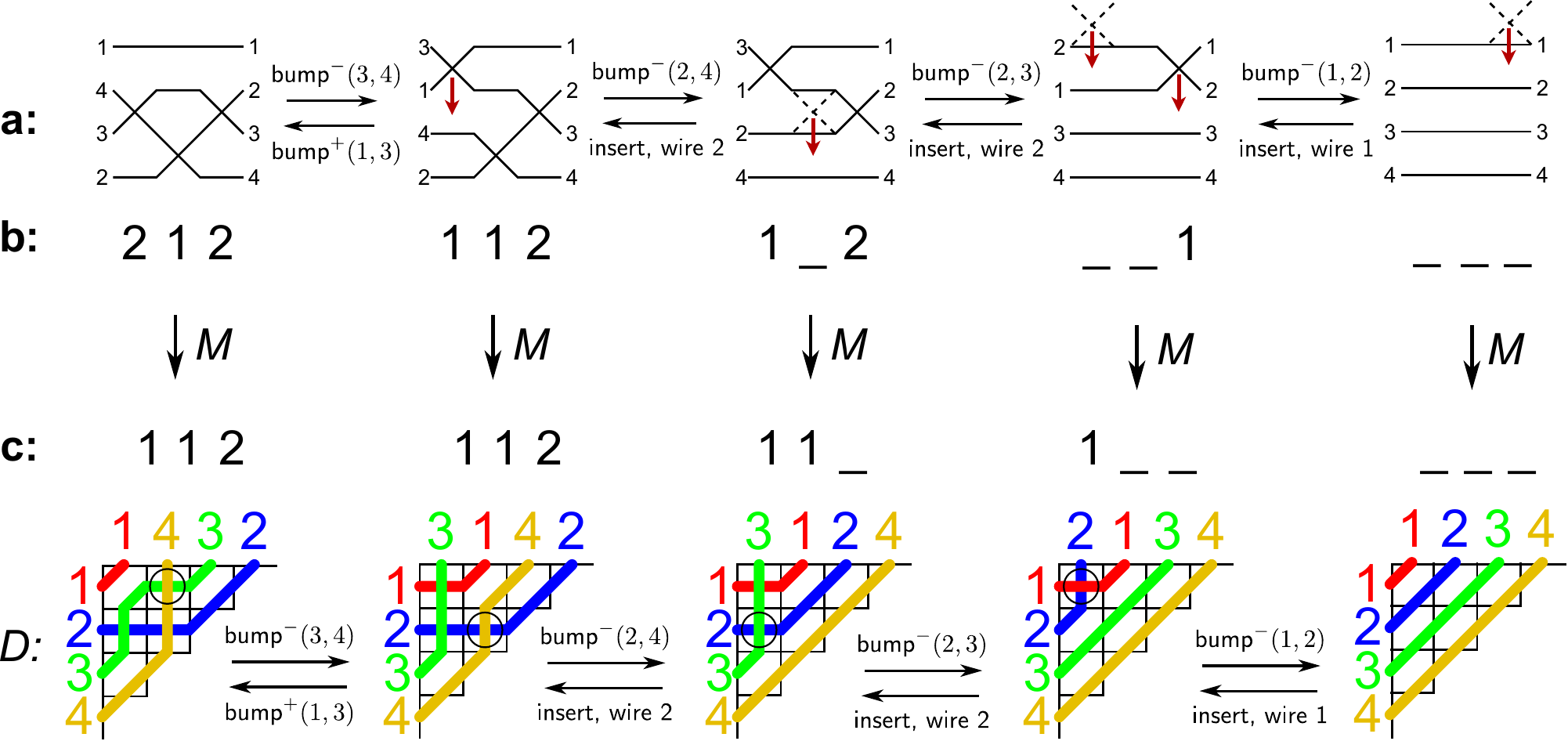}
\caption{An example of the bijection $\MacdonaldMap$ for $\pi=[1,4,3,2]$
  where the pair $(\mathbf{a}, \mathbf{b})$ is mapped to $(\mathbf{c},
  D)$ with $\mathbf{a} =(2,3,2)$, $\mathbf{b}=(2, 1, 2)$,
  $\mathbf{c}=(1,1,2)$, and $D$ is the pipe dream in the top left
  corner of the picture.  Its transition chain is $Y(D)=((1,3), (2,2),(2,2),(1,1))$.
  Each vertical pair in the picture is also demonstrating the
  bijection for a different permutation; note the permutations on the
  wires agree on the vertical pairs.}
\label{fig:bijection}
\end{center}
\end{figure}

Next we apply
the
bumping algorithm on
reduced words and their wiring diagrams, using the reverse
of the transition chain $Y(D)$ to provide instructions. The
word $\mathbf{c}$ tells us where to insert new crossings;
when adding the $i$th new crossing it should become the
$c_i$th entry in the word. The height of the added crossing
is determined by placing its feet on the wire specified by
the corresponding step of the transition chain. Each new
crossing is immediately pushed higher in value, initiating
a Little bump. The result is a reduced wiring diagram for
$\pi$ corresponding to a reduced word
$\mathbf{a}=(a_1,a_2,\ldots, a_p)$.  If we keep track of
how many times each column is pushed in the bumping
processes, we obtain a word $\mathbf{b}=(b_1,\ldots,b_p)$
of the same length such that $a_i \geq b_i$ for all $i$, as
required.  See Figure~\ref{fig:bijection} for an
illustration of the algorithm.  It turns out that each step
is reversible.

Our bijective proof extends to a $\q$-analog of
\eqref{e:macdonald.formula} that was conjectured by Macdonald and
subsequently proved by Fomin and Stanley. To state this formula, let
$\q$ be a formal variable. Define the $\q$-analog of a positive integer
$k$ to be $[k]=[k]_{\q}:=1+\q+\q^2+\cdots + \q^{k-1}$. The $\q$-analog of the
factorial $k!$ is defined to be $[k]\Fact=[k]_\q\Fact := [k][k-1]\cdots
[1]$. (We use the blackboard bold symbol $\Fact$ to distinguish it
from the ordinary factorial, and the symbol $\q$ for the formal variable
to avoid notation conflicts.) For $\mathbf{a}=(a_1,a_2,\ldots, a_p)
\in R(\pi)$, define the co-major index to be the sum of the ascent
locations:
$$\comaj(\mathbf{a}) := \sum_{\substack{1 \leq i < p: \\ a_i < a_{i+1}}} i.$$

\begin{samepage}
\begin{thm}[Fomin and Stanley {\cite{Fomin-Stanley}}] \label{t:macdonald.q.analog}
  Given a permutation $\pi \in S_n$ with $\ell(\pi)=p$, one has
  \begin{equation}\label{e:fomin.stanley.formula}
  \sum_{\mathbf{a}=(a_1,a_2,\ldots, a_p) \in R(\pi)}  [a_1]\cdot [a_2] \cdots [a_p]\ \q^{\comaj(\mathbf{a})} \
  = {[p]\Fact} \,   \mathfrak{S}_\pi(1,\q,\q^2,\ldots, \q^{n-1}).
\end{equation}
\end{thm}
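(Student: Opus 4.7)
The plan is to use the same bijection $\MacdonaldMap$ that proves Theorem~\ref{t:macdonald}, and to show that it preserves an appropriate $\q$-weight. First, I would expand each side of \eqref{e:fomin.stanley.formula} as a weighted enumeration of the same pairs that appear in the $\q=1$ bijection. Writing $[k]_\q = \sum_{j=0}^{k-1} \q^j$, the left-hand side becomes
\[
\sum_{(\mathbf{a},\mathbf{b})} \q^{\,\comaj(\mathbf{a}) \,+\, (b_1 - 1) \,+\, \cdots \,+\, (b_p - 1)},
\]
summed over pairs with $\mathbf{a}\in R(\pi)$ and $1\le b_i \le a_i$. Since $\mathfrak{S}_\pi(1,\q,\ldots,\q^{n-1}) = \sum_{D} \q^{\sum_{(i,j)\in D}(i-1)}$ over pipe dreams $D$ for $\pi$ (with $i$ the row index of the crossing), and $[p]\Fact = \sum_{\mathbf{c}} \q^{\sum_i(c_i - 1)}$ over sub-staircase $\mathbf{c}$, the right-hand side becomes
\[
\sum_{(\mathbf{c},D)} \q^{\,(c_1-1)\,+\,\cdots\,+\,(c_p-1)\,+\,\sum_{(i,j)\in D}(i-1)}.
\]
Thus it suffices to show that $\MacdonaldMap$ sends a pair $(\mathbf{a},\mathbf{b})$ to a pair $(\mathbf{c},D)$ satisfying
\[
\comaj(\mathbf{a}) + \sum_i b_i \ = \ \sum_i c_i + \sum_{(i,j)\in D}(i-1).
\]

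I would prove this weight identity by induction on $p=\ell(\pi)$, following the iterative structure of the bijection. The inverse map $\MacdonaldMap^{-1}$ builds $(\mathbf{c},D)$ from $(\mathbf{a},\mathbf{b})$ one letter at a time: each step removes the last entry from $\mathbf{b}$, performs one inverse Little bump on the wiring diagram of $\mathbf{a}$ (deleting a single crossing and yielding a shorter reduced word $\mathbf{a}'$), and correspondingly performs one transition step on the pipe dream that records the value $c_p$ and produces a pipe dream $D'$ for a shorter permutation. The inductive step is then to show that the change in the left side of the weight identity under one inverse bump equals the change in the right side under one transition step. This reduces the theorem to a purely local comparison.

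The main obstacle is the local analysis of the Little bump. A single bump is a cascade of elementary moves that can displace many crossings, and $\comaj$ is a global descent-type statistic, so a bump can simultaneously create and destroy ascents in $\mathbf{a}$. The key lemma will quantify the net change in $\comaj(\mathbf{a})$ under one bump in terms of the column entry $b_p$ and the height of the crossing where the bump terminates, and will match this with the change $c_p - 1 + (\text{row}-1)$ contributed by the new crossing placed in $D$ during the inverse transition step. The combinatorial control needed for this local calculation is closely related to Lenart's analysis of Schubert polynomials and to the Markov chain description in \cite{BY2014}, which should supply the identities used in the bookkeeping. Once this local weight-preservation is established, the theorem follows immediately by combining it with the $\q=1$ bijection from Theorem~\ref{t:macdonald}.
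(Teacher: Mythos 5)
Your overall strategy---expand both sides as weighted sums over the same pairs as in the $\q=1$ case, and track the weight through the recursive structure of $\MacdonaldMap$---is the same as the paper's. But the precise statement you reduce to, namely that $\MacdonaldMap$ sends $(\bfa,\bfb)$ to $(\mathbf{c},D)$ with
\[
\comaj(\bfa)+\textstyle\sum_i (b_i-1)\;=\;\sum_i (c_i-1)+\sum_{(i,j)\in D}(i-1),
\]
is false for the map $\MacdonaldMap$ as constructed. Take $\pi=[3,2,1]$ and $(\bfa,\bfb)=((2,1,2),(1,1,1))$. Running Algorithm~\ref{algorithm:mac} gives $\MacdonaldMap(\bfa,\bfb)=((1,1,3),D)$ where $D$ is the unique pipe dream for $[3,2,1]$, with $\mathbf{i}_D=(1,1,2)$. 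The left side of your identity is $\comaj((2,1,2))+0=2$, while the right side is $(0+0+2)+(0+0+1)=3$. (The same failure occurs with the paper's weight convention $\sum(a_i-b_i)$ in place of $\sum(b_i-1)$; no term-by-term weight preservation holds for $\MacdonaldMap$ in either normalization.) Consequently the ``purely local comparison'' you reduce to---matching the comaj change of a single bump against $c_p-1+(\mathrm{row}-1)$---cannot be closed: the comaj change of the bump terminating in column $\dvalue$ is \emph{not} $\dvalue-1+(r-1)$.

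What is actually true, and what the paper proves, is weaker and is exactly the missing idea. For a fixed reduced word $\mathbf{e}$ of $\nu$ and wire $j=\pi(r)$, the ``augmented comaj difference word'' $v^j_\dvalue(\mathbf{e})=\comaj(y^j_\dvalue(\mathbf{e}))-\comaj(\mathbf{e})+h^j_\dvalue(\mathbf{e})-1$, recording the total weight change when the inserted crossing becomes column $\dvalue$, is a \emph{permutation} of the consecutive interval $[r-1,\,r+p-1]$ (Lemma~\ref{l:aug.comaj.diff}, a reduced-word analogue of Gupta's insertion lemma proved by induction on initial substrings). Summing over $\dvalue\in[1,p]$ therefore produces the factor $\q^{r-1}[p]_\q$ regardless of the order in which these exponents appear, which yields a $\q$-transition recurrence for the bounded-pair generating function (Theorem~\ref{t:transitionBP.qanalog}) matching the recurrence satisfied by $[p]\Fact\,\mathfrak{S}_\pi(1,\q,\ldots,\q^{n-1})$; the theorem then follows by induction on inversion order, at the level of generating functions rather than term by term. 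Two further remarks: first, the relevant parameter controlling the weight change is the deletion column $\dvalue=c_p$, not the entry $b_p$ as you suggest; second, the paper expands $[a_i]=\sum_{b_i}\q^{a_i-b_i}$ rather than $\sum_{b_i}\q^{b_i-1}$, because the differences $a_i-b_i$ are invariant under every push of the bounded bumping algorithm and equal $\mathbf{i}_D-\mathbf{1}$ when the pair encodes a pipe dream, so all of the difficulty is isolated in the $\comaj$ statistic. With your convention even the non-comaj part of the weight moves under each push, which would make the bookkeeping strictly harder.
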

\end{samepage}

Continuing with the example $\pi=[3,2,1]$, we observe that
the $\q$-analog formula indeed holds: $[1]\cdot [2]\cdot
[1]\q + [2] \cdot [1] \cdot [2]\q^2 = (1+\q)\q + (1+ \q)^2\q^2 =
(1+\q+\q^2)(1+\q)\q = [3]{\Fact} \cdot \mathfrak{S}_{[3,2,1]}
(1,\q,\q^2) $.

In 1997, Fomin and Kirillov published a further extension
to Theorem~\ref{t:macdonald.q.analog}.  They interpreted
the right side of the formula in terms of reverse plane
partitions, and asked for a bijective proof.  See
Theorem~\ref{t:fomin.kirillov.2}.  Using our methods
together with results of Lenart \cite{Lenart.2004}, and
Serrano and Stump
\cite{Serrano.Stump.FPSAC,Serrano.Stump.2012}, we
provide a bijective proof.

We want to comment briefly on how the bijections in this paper were
found. We were fully aware of Little's bumping algorithm so we hoped
it would play a role.  Many details of the exact formulation we
describe here were found through extensive experimentation by hand and
by computer. Experimentally, we found the transition chains to be the
key link between a bounded pair and its image under $M$.  As the proof
was written up, we chose to suppress the  dependence on the
transition chains in favor of clearer descriptions of the maps.

The outline of the paper is as follows.  In
Section~\ref{s:background}, we give the notation and background
information on reduced words, Schubert polynomials, Little bumps,
etc. The key tool for our bijection comes from the Transition Equation
for Schubert polynomials.  We give a bijective proof of this equation
in Section~\ref{s:trans}.  In Section~\ref{s:trans.bounded.pairs}, we
extend the Transition Equation to bounded pairs, by which we mean
pairs $(\bfa,\bfb)$ satisfying $1\leq b_{i}\leq a_{i}$, as discussed
above. In Section~\ref{s:bij.proof}, we spell out the main bijection
proving Theorem~\ref{t:macdonald}. The principal specialization of
Macdonald's formula given in Theorem~\ref{t:macdonald.q.analog} is
described in Section~\ref{s:specialization}, along with some
interesting properties of the co-major index on reduced words.  In
Section~\ref{s:fk}, we discuss the Fomin-Kirillov theorems and how our
bijection is related to them.  Finally, in Section~\ref{s:future} we
discuss some intriguing open problems and other formulas related to
Macdonald's formula.

\section{Background}\label{s:background}

\subsection{Permutations}\label{ss:permutations}
We recall some basic notation and definitions relating to
permutations which are standard in the theory of Schubert
polynomials.  We refer the reader
to~\cite{LS1,M2,manivel-book} for more information.

Let $S_n$ be the symmetric group of all permutations
$\pi=[\pi(1),\ldots,\pi(n)]$ of $\{1,\ldots,n\}$.  An \emph{inversion}
of $\pi\in S_n$ is an ordered pair $(i,j)$, such that $ i < j $ and
$\pi(i) > \pi(j)$. The \emph{length} $\ell(\pi)$ is the number of
inversions of $\pi$.  We write $t_{ij}$ for the transposition which
swaps $i$ and $j$, and we write $s_i = t_{i, i+1}$ $(1 \leq i \leq
n-1)$.  The $s_i$ are called \emph{simple transpositions}; they
generate $S_n$ as a Coxeter group.  Composition of permutations is
defined via $\pi\tau(i):=\pi(\tau(i))$.

An alternate notation for a permutation $\pi \in S_n$ is its
\emph{Lehmer code}, or simply its \emph{code}, which is the $n$-tuple
\[
    (L(\pi)_1, L(\pi)_2, \ldots, L(\pi)_n)
\]
where $L(\pi)_i$ denotes the number of inversions $(i,j)$ with first coordinate $i$.  Note, $0 \leq L(\pi)_i \leq n-i$ for
all $1 \leq i \leq n$.  The permutation $\pi$ is said to be
\emph{dominant} if its code is a weakly decreasing sequence.

\subsection{Reduced words}\label{ss:reduced words}

A \emph{word} is a $k$-tuple of integers.  The \emph{ascent set} of a
word $\mathbf{a}=(a_1, \ldots, a_{k})$ is $\{i: a_{i}<a_{i+1} \}
\subseteq \{1,\ldots, k-1 \}$.  The \emph{descent set} of $\mathbf{a}$
is the complement.

Let $\pi \in S_n$ be a permutation.  A \emph{word for $\pi$} is a word
$\mathbf{a}= (a_1, \ldots, a_{k})$ such that $1 \leq a_i < n$ and
\[
    s_{a_1} s_{a_2} \ldots s_{a_{k}} = \pi.
\]
If $k = \ell(\pi)$, then we say that $\mathbf{a}$ is a
\emph{reduced word} for $\pi$.  The reduced words are
precisely the minimum-length ways of representing $\pi$ in
terms of the simple transpositions.  For instance, the
permutation $[3,2,1] \in S_3$ has two reduced words:
(1,2,1) and (2,1,2).  The empty word $()$ is the unique
reduced word for the identity permutation $[1,2,\ldots, n]
\in S_{n}$.

Write $R(\pi)$ for the set of all reduced words of the
permutation $\pi$.  The set $R(\pi)$ has been extensively
studied,
in part
due to interest in Bott-Samelson varieties
and Schubert calculus. Its size has an interpretation in
terms of counting standard tableaux and the Stanley
symmetric functions~\cite{LS1,little2003combinatorial,S3}.

Define the \emph{wiring diagram} for a word
$\mathbf{a}=(a_1,\ldots,a_k)$ as follows. First, for $0
\leq t \leq k$, define the permutation $\pi_t \in S_n$
\emph{at time} $t$ by
\[
    \pi_t =  s_{a_1}s_{a_1}\cdots s_{a_t}.
\]
So $\pi_0$ is the identity, while $\pi_k=\pi$.  The
$i$-\emph{wire} of $\mathbf{a}$ is defined to be the
piecewise linear path joining the points
$(\pi^{-1}_t(i),t)$ for $0 \leq t \leq k$.  We will
consistently use ``matrix coordinates'' to describe wiring
diagrams, so that $(i,j)$ refers to row $i$ (numbered from
the top of the diagram) and column $j$ (numbered from the
left).  The \emph{wiring diagram} is the union of these $n$
wires.  See Figure~\ref{fig:wiring.diag} for an example.

\begin{figure}
\centering
\raisebox{2mm}{\includegraphics[width=1.7in]{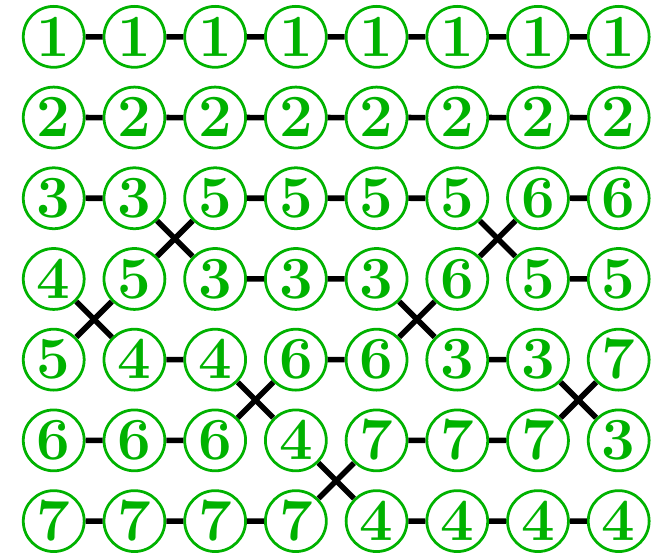}}
\hspace{.2in}
\includegraphics[width=1.7in]{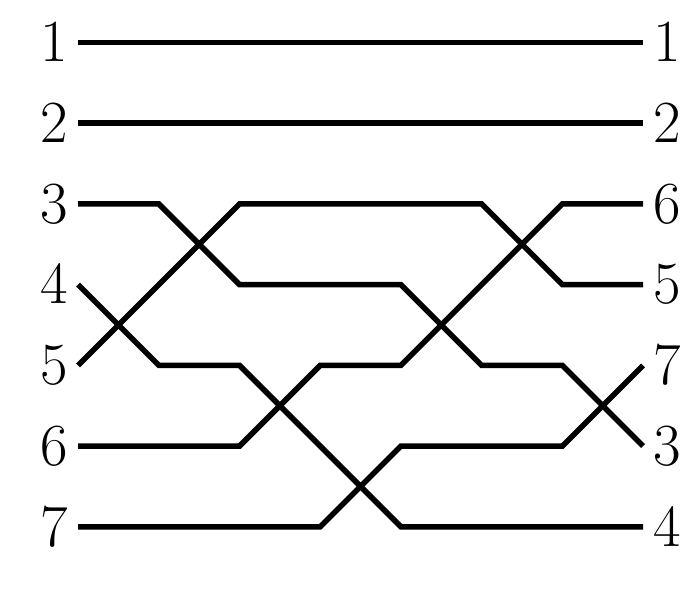}
\hspace{.2in}
\includegraphics[width=1.7in]{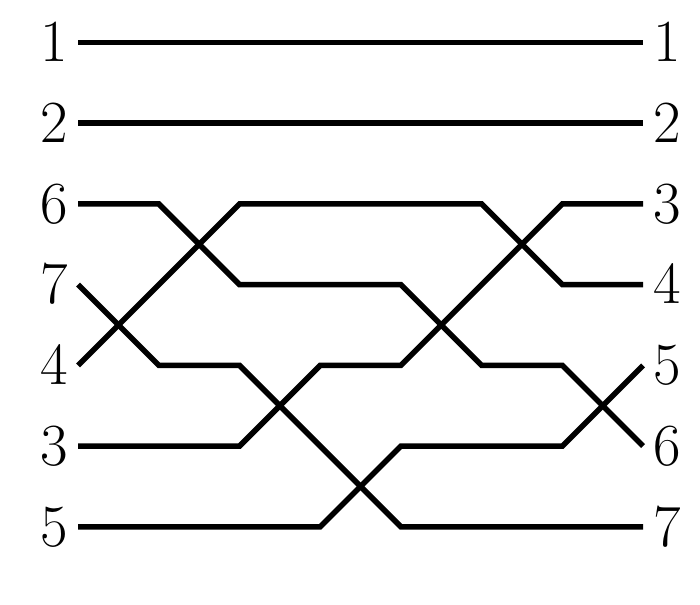}

\caption{The wiring diagram for the reduced word $(4, 3,5, 6, 4, 3,5)
  \in R([1,2,6,5,7,3,4])$ annotated in three different ways: with the intermediate permutations $\pi_t$,
   the left-labeling, and the right-labeling.
   The crossings in columns 2 and 6 are both at row 3.}
\label{fig:wiring.diag}
\end{figure}

For all $t \geq 1$, observe that between columns $t-1$ and
$t$ in the wiring diagram for $\bfa$, precisely two wires
$i$ and $j$ intersect. This intersection is called a
\emph{crossing}.  One can identify a crossing by its
\emph{column} $t$.  We call $a_t$ the \emph{row} of the
crossing at column $t$.  When the word $\mathbf{a}$ is
reduced, the minimality of the length of $\mathbf{a}$
ensures that any two wires cross at most once.  In this
case, we can also identify a crossing by the unordered pair
of wire labels that are involved, i.e.\ the pair
$\{\pi_t(a_t),\pi_t(a_{t+1})\}$.

Note that the terms row and column have slightly different
meaning when we refer to a crossing versus a wire.  The
upper left corner of a wiring diagram is at $(1,0)$.  When
we say a crossing in row $i$ column $j$ it means the
intersection of the crossing is at
$(i+\tfrac12,j-\tfrac12)$.  When we say wire $r$ is in row
$i$ at column $j$, we mean that $\pi_j(i)=r$, so that the
$r$-wire passes through the point $(i,j)$.

Observe that for $i<j$, wires $\pi(i)$ and $\pi(j)$ cross
in the wiring diagram for $\bfa \in R(\pi)$ if and only if
$\pi(i)>\pi(j)$.  This occurs if and only if $(i,j)$ is an
inversion of $\pi$, which in turn is equivalent to the wire
labels $(\pi(j),\pi(i))$ being an inversion of $\pi^{-1}$.
Many of the arguments below depend on the positions of the
inversions for $\pi$ not for $\pi^{-1}$. Reversing any word
for $\pi$ gives a word for $\pi^{-1}$. Thus, if we label
the wires $1,2,3,\ldots$ in increasing order down the right
side of a wiring diagram instead of the left, then the
corresponding wires travel right to left, and appear in the
order $\pi^{-1}$ down the left side. Thus, the $i$-wire and
the $j$-wire cross in the \emph{right-labeled wiring
diagram} for $\bfa \in R(w)$ if and only if $(i,j)$ is an
inversion of $\pi$.

The wiring diagrams shown on the
first
row of
Figure~\ref{fig:bijection} are all right-labeled wiring diagrams.  For
example, the word $(1,3,2)$ corresponding to the second wiring diagram
from the left is a reduced word for the permutation
$[2,4,1,3]=[3,1,4,2]^{-1}$.

\subsection{Bounded bumping algorithm}\label{ss:little algorithm}

Little's bumping algorithm~\cite{little2003combinatorial},
also known as a ``Little bump'',
is a map on reduced words.  It was introduced to study the
decomposition of Stanley symmetric functions into Schur
functions in a bijective way.  Later, the Little algorithm
was found to be related to the Robinson-Schensted-Knuth
map~\cite{little2} and the Edelman-Greene
map~\cite{hamaker-young}; it has been extended to signed
permutations~\cite{billey-hamaker-roberts-young},  affine
permutations \cite{lam-shimozono}, and the subset of involutions in $S_{n}$ \cite{Hamaker-Marberg-Pawlowski.2016}.  The key building block
of our bijective proofs is an enhancement of Little's
algorithm which we call the bounded bumping algorithm. We
describe it below, after setting up notation.


\begin{defn}
    Let $\mathbf{a} = (a_1, \ldots, a_k)$ be a word.  Define the
\emph{decrement-push}, \emph{increment-push},
\emph{deletion} and \emph{insertion} of $\mathbf{a}$ at
column $t$, respectively, to be \begin{align*}
        \Push^-_t \bfa &= (a_1, \ldots, a_{t-1}, a_t-1, a_{t+1}, \ldots, a_k); \\
        \Push^+_t \bfa &= (a_1, \ldots, a_{t-1}, a_t+1, a_{t+1}, \ldots, a_k); \\
        \Delete_t \bfa &= (a_1, \ldots, a_{t-1},  a_{t+1}, \ldots,
        a_k); \\
        \Insert_t^{x} \bfa &= (a_1, \ldots, a_{t-1},x,  a_{t}, \ldots, a_k).
    \end{align*}
\end{defn}
	
In~\cite{BY2014}, the notation $\Push^{\uparrow}$ was used to
represent $\Push^-$, and $\Push^{\downarrow}$ was used to represent
$\Push^+$ based on the direction of a crossing in the wiring diagram.

\begin{defn}
    Let $\mathbf{a}$ be a word.  If $\Delete_t \bfa$ is reduced, then we say that $\mathbf{a}$ is \emph{nearly reduced at $t$}.
\end{defn}

The term ``nearly reduced'' was coined by Lam~\cite[Chapter
  3]{LLMSSZ}, who uses ``$t$-marked nearly reduced''.  Words that are
nearly reduced at $t$ may or may not also be reduced;
however, every reduced word $\bfa$ is nearly reduced at
some index $t$.  For instance, a reduced word $\bfa$ of
length $k$ is nearly reduced at $1$ and at $k$.

In order to define
the bounded bumping algorithm,
we need the following lemma, which
to our knowledge first appeared in~\cite[Lemma
4]{little2003combinatorial}, and was later generalized to arbitrary
Coxeter systems by Lam and Shimozono using the strong exchange
property.  The statement can also be checked for permutations by considering the wiring
diagram.
\begin{lem}\cite[Lemma 21]{lam-shimozono}
\label{lem:nearly_reduced} If $\bfa$ is not reduced, but is
nearly reduced at $t$, then $\bfa$ is nearly reduced at
exactly one other column $t' \neq t$.   In the wiring
diagram of $\bfa$, the two wires crossing in column $t$
cross in exactly one other column $t'$.
\end{lem}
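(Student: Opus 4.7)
The plan is to argue purely in terms of the wiring diagram, using the standard characterization that a word is reduced if and only if no two wires cross more than once, and that the number of crossings in the diagram equals the length of the word.

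First, I would fix notation: let $p,q$ be the two wires meeting at the crossing in column $t$ of the wiring diagram of $\bfa$, and for any (unordered) pair $\{i,j\}$ of wires let $N_{ij}$ denote the number of columns in which wires $i$ and $j$ cross. Deleting the $t$-th letter of $\bfa$ corresponds to excising the crossing in column $t$ from the diagram and reconnecting the two wire segments; this operation decreases $N_{pq}$ by exactly $1$ while leaving all other $N_{ij}$ unchanged.

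Next I would combine the two hypotheses. Since $\Delete_t\bfa$ is reduced, in the new diagram every pair of wires crosses at most once, i.e. $N_{ij}\le 1$ for $\{i,j\}\ne\{p,q\}$ and $N_{pq}-1\le 1$. Since $\bfa$ itself is not reduced, some pair of wires must cross at least twice in the original diagram; by the previous inequalities the only candidate is $\{p,q\}$, and one deduces $N_{pq}=2$ while $N_{ij}\le 1$ for every other pair. This pins down the structure: wires $p$ and $q$ cross in exactly two columns, namely $t$ and a unique other column, which I will call $t'$. All other pairs cross at most once.

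From here the lemma falls out. Deleting the crossing in column $t'$ reduces $N_{pq}$ to $1$ and leaves all other counts unchanged, so the resulting diagram has every pair crossing at most once; hence $\bfa$ is nearly reduced at $t'$. Conversely, if $t''\notin\{t,t'\}$, then the crossing in column $t''$ involves some pair $\{p',q'\}\ne\{p,q\}$, and deleting it leaves $N_{pq}=2$ intact, so $\Delete_{t''}\bfa$ is still not reduced. This shows $t'$ is the unique other column at which $\bfa$ is nearly reduced, as required.

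The argument is genuinely short once one accepts the reduced/crossings equivalence, and I do not foresee a real obstacle; the only step that demands care is the bookkeeping in the second paragraph, where one must verify that $N_{pq}-1\le 1$ together with the existence of a pair with $N_{ij}\ge 2$ forces $\{p,q\}$ to be that pair and forces $N_{pq}=2$ exactly. Everything else is an immediate consequence of this counting.
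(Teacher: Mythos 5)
Your reduction to crossing counts is the right instinct, but the central bookkeeping claim --- that excising the crossing in column $t$ decreases $N_{pq}$ by one ``while leaving all other $N_{ij}$ unchanged'' --- is false, and the error propagates through every later step. Deleting the letter $a_t$ does not merely erase one intersection from a fixed picture: downstream of column $t$ the wire labels $p$ and $q$ are interchanged (the time-$u$ permutation of $\Delete_t\bfa$ differs from that of $\bfa$ by a factor of $t_{pq}$), so for $r\notin\{p,q\}$ the counts get mixed: the new $N_{pr}$ equals the number of old $p$--$r$ crossings before column $t$ plus the number of old $q$--$r$ crossings after it. Only $N_{pq}$ itself and the pairs disjoint from $\{p,q\}$ behave as you assert. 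Concretely, take $\bfa=(1,2,2,1)$ and $t=2$: then $\Delete_2\bfa=(1,2,1)$ is reduced, $\bfa$ is not, and the column-$2$ crossing is between wires $p=1$ and $q=3$. Wires $1$ and $3$ cross in columns $2$ and $3$, but wires $1$ and $2$ \emph{also} cross twice (columns $1$ and $4$), so your conclusion that $N_{ij}\le 1$ for every pair other than $\{p,q\}$ fails. Your final paragraph then breaks in both directions: deleting column $3$ \emph{does} yield the reduced word $(1,2,1)$ even though on your accounting $N_{12}$ would still be $2$; and deleting column $1$ does not ``leave $N_{pq}=2$ intact'' --- in the diagram of $(2,2,1)$ wires $1$ and $3$ never cross, and the word is non-reduced because wires $2$ and $3$ now cross twice.

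The lemma is of course true; note that the paper does not prove it but imports it from Little and from Lam--Shimozono, so there is no internal proof to compare against. A correct argument must handle the relabeling. One clean route is the strong exchange condition: with $\sigma$ the permutation of $\Delete_t\bfa$ (so $\ell(\sigma)=k-1$) one has $\pi=\sigma t'$ for a reflection $t'$, and $\ell(\pi)\le k-2<\ell(\sigma)$ by parity, which forces $\ell(\pi)=k-2$ and produces a unique second deletable position. A purely diagrammatic proof is also possible (Little's Lemma 4), but it must track the two specific strands passing through column $t$ rather than only the unordered crossing multiplicities of all pairs.
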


\begin{defn}
In the situation of Lemma~\ref{lem:nearly_reduced}, we say
that $t'$ \emph{forms a defect with} $t$ in $\bfa$, and
write $\defect_t(\bfa) = t'$.
\end{defn}

\begin{figure}
\centering
\includegraphics[width=1.65in]{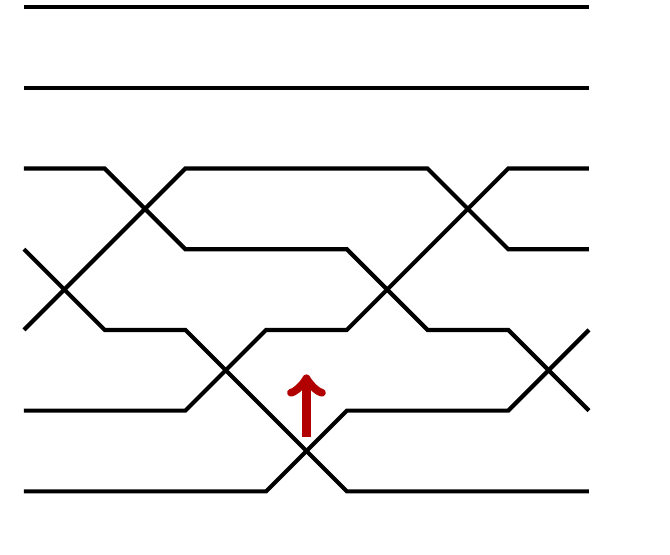}
\includegraphics[width=1.65in]{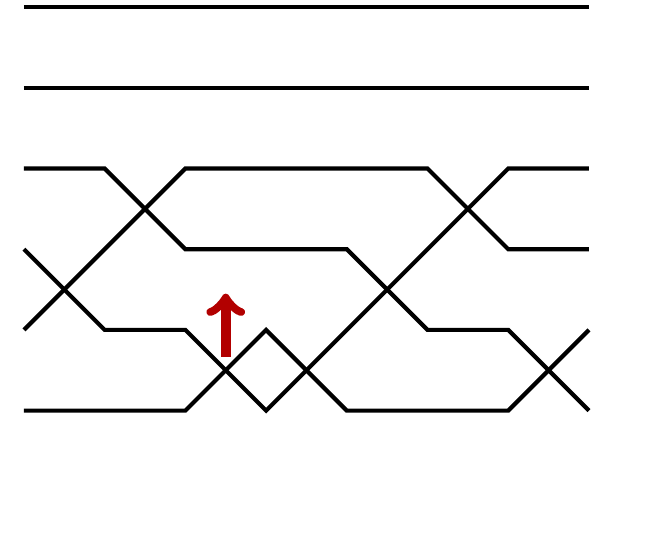}
\includegraphics[width=1.65in]{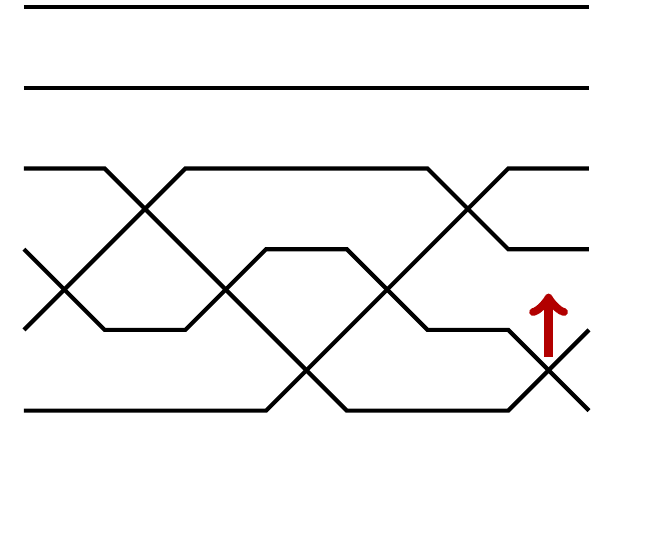} \\
\vspace{.2in}
\includegraphics[width=1.65in]{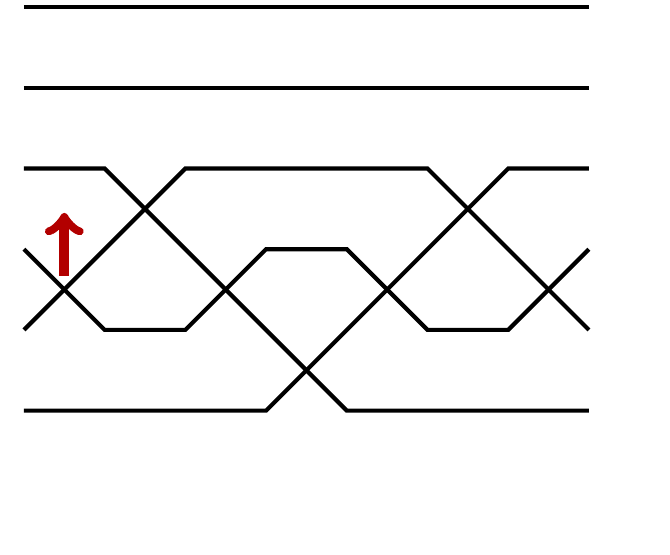}
\includegraphics[width=1.65in]{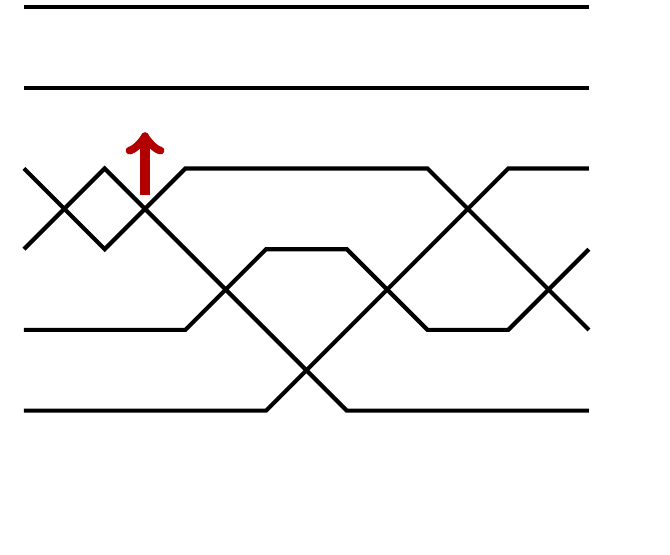}
\includegraphics[width=1.65in]{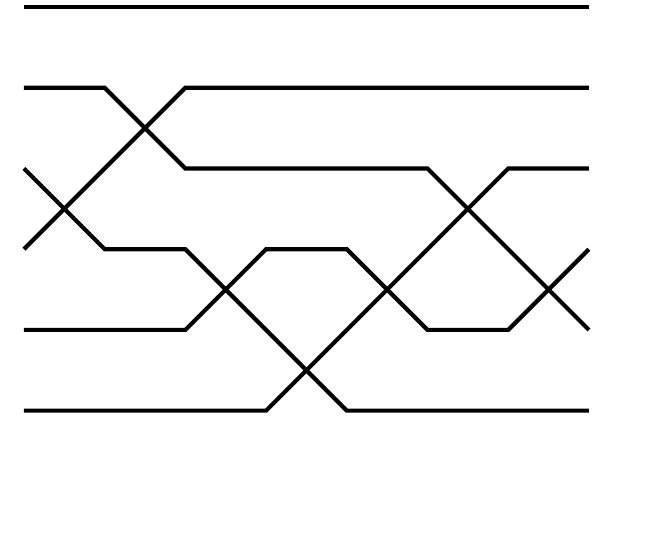}
\caption{An example of the sequence of wiring diagrams for the words
  $\bfa'$ which appear when running the bounded bumping algorithm on
  input
  $\bfa =(4, 3, 5, 6, 4, 3, 5), \ \bfb = (2,2,2,2,2,2,2),\ t_0=4,
  \text{ and } \d=-.$ The arrows indicate which crossing will move in
  the next step.  After the first step, row $7$ contains a wire with
  no swaps, which is therefore not shown.}
\label{fig:little.bump}
\end{figure}

A crucial point is that the definitions of ``reduced'',
``nearly reduced'', and the $\defect$ map make sense even
if we are given only the word $\bfa$, but not the
corresponding permutation $\pi\in S_n$, nor even its size
$n$. Indeed, we can take $n$ to be any integer greater than
the largest element of $\bfa$; it is easily seen that the
three notions coincide for all such $n$.  An alternative,
equivalent viewpoint is to interpret all our permutations
as permutations of $\Z^+:=\{1,2,\ldots\}$ that fix all but
finitely many elements; we can abbreviate such a
permutation $\pi=[\pi(1),\pi(2),\ldots]$ to
$\pi=[\pi(1),\ldots,\pi(n)]$ where $n$ is any integer such
that all elements greater than $n$ are fixed.  Let
$S_{\infty}$ be the set of all such permutations on
$\Z^{+}$.

Our central tool is a modification of the bumping algorithm introduced
by Little in~\cite{little2003combinatorial}.  We call our modified
version the \emph{bounded} bumping algorithm. This algorithm will be
used twice in the proof of Theorem~\ref{t:macdonald}, in two different
contexts.

\begin{defn}
A word $\bfb$ is a \emph{bounded word} for another word
$\bfa$ if the words have the same length and $1\leq b_i\leq
a_i$ for all $i$. A \emph{bounded pair} (for a permutation
$\pi$) is an ordered pair $(\bfa,\bfb)$ such that $\bfa$ is
a reduced word (for $\pi$) and $\bfb$ is a bounded word for
$\bfa$.  Let $\BoundedPairs(\pi)$ be the set of all bounded
pairs for $\pi$.
\end{defn}

\noindent For example, for the simple transposition $s_k$,
the set is
$$\BoundedPairs(s_k) = \bigl\{\bigl((k),(i)\bigr): 1\leq i \leq k\bigr\}.$$

\begin{algorithm}[Bounded Bumping Algorithm]
  \label{algorithm:little bump}\

\noindent \textbf{Input}: $(\bfa, \bfb, t_0, \d)$, where
$\bfa$ is a word that is nearly reduced at $t_0$, and
$\bfb$ is a bounded word for $\bfa$, and $\d \in \{-,
+\}=\{-1,+1 \}$ is a direction.

\noindent \textbf{Output}: $\Bump^{
\d}_{t_0}(\bfa, \bfb) =
(\bfa', \bfb', i, j, \outcome)$, where $\bfa'$ is a reduced
word, $\bfb'$ is a bounded word for $\bfa'$, $i$ is the row
and $j$ is the column of the last crossing pushed in the
algorithm, and $\outcome$ is a binary indicator explained
below.

\begin{enumerate}
    \item Initialize $\bfa'\leftarrow \bfa,\, \bfb'
        \leftarrow \bfb, \, t \leftarrow t_0$.
\item Push in direction
$\d$ at column $t$, i.e.\ set $\bfa' \leftarrow
    \Push^{\d}_{t}\bfa'$ and $\bfb' \leftarrow
    \Push^{\d}_{t}\bfb'$.
\item If $b'_t = 0$, return $(\Delete_t \bfa', \Delete_t
    \mathbf{b}', \bfa'_{t}, t, \deleted)$ and
    \textbf{stop}.
\item If $\bfa'$ is reduced, return $(\bfa', \bfb',
    \bfa'_{t}, t, \bumped)$ and \textbf{stop}.
\item Set $t \leftarrow \defect_t(\bfa')$ and
    \textbf{return to step 2.}
\end{enumerate}
\end{algorithm}

The principal difference between the above algorithm and Little's map
$\theta_r$ in~\cite{little2003combinatorial} is the presence of the
bounded word $\bfb$, which indicates the number of times each column
is allowed to be decremented before being deleted.  The stopping rule
in step 3 is not present in Little's algorithm.  As discussed above,
one consequence is that when $\d=-$, our map can never output a word
containing a $0$: if a push results in a $0$ then it is immediately
deleted and the algorithm stops.  In contrast, in Little's original
algorithm, the entire word is instead shifted by $+1$ in this
situation, changing the permutation (and also stopping, since the word
is reduced).  Indeed, Little's bumping algorithm in the $+1$ direction
on a reduced word $\bfa$ maps to $\bfa'$ if and only if \[
\Bump^{+}_j(\mathbf{a}, \mathbf{b}) = (\mathbf{a}',\mathbf{b}',
i,j,
\bumped) \] regardless of the choice of bounded word $\bfb$ for $\bfa$.

Since this algorithm is the main tool used in the paper, we will give
several examples. In Figure~\ref{fig:little.bump}, we show the
sequence of wiring diagrams for the words $\bfa'$ in the algorithm when it is run on the input
$$\bfa =(4, 3, 5, 6, 4, 3, 5), \ \bfb = (2,2,2,2,2,2,2),\ t_0=4, \text{ and } \d=-.$$
The result is
$$\Bump^{-}_{4}(\bfa, \bfb) =
\bigl((3, 2, 4, 5, 4, 3, 4), (1,1,1,1,2,2,1),2,2, \bumped\bigl).$$
Note that Little's bumping algorithm maps $(4, 3, 5, 6, 4, 3, 5)$ to
$(3, 2, 4, 5, 4, 3, 4)$ using the exact same sequence of pushes as in
Figure~\ref{fig:little.bump} as expected since $\outcome=\bumped$.
On the other hand, with input
$\widetilde{\mathbf{b}} =
(2,2,2,2,2,2,1)$
the bounded bumping algorithm stops after the third push
in the sequence because $\widetilde{b}_7=1$, so
$$\Bump^{-}_{4}(\bfa, \widetilde{\mathbf{b}}) =
\bigl((4, 3, 4, 5, 4, 3), (2,2,1,1,2,2), 4, 7, \deleted\bigr).$$
Another good example for the reader to consider is when the input word $\bfa$  is a consecutive sequence such as
$$\Bump^{-}_{1}\bigl((6,5,4,3), (3,3,3,3)\bigr)=\bigl((5,4,3,2),
(2,2,2,2),2,4,\bumped\bigr).$$

We now make some remarks about this algorithm. The initial
input word $\bfa$ may or may not be reduced, but, if we
reach step $5$ then $\bfa'$ is always not reduced but
nearly reduced at $t$, so the $\defect$ map makes sense.

Suppose that the input word $\bfa$ is a word for a permutation $\pi\in
S_n$. Pushes may in general result in words with elements
outside the interval $[1,n-1]$. Specifically, in the case
$\d=+$, step 2 may result in a word $\bfa'$ with an element
$a_t'=n$. As mentioned above, this can be interpreted as a
word for a permutation in $S_{n+1}$.  In fact, in this case
the algorithm will immediately stop at step 4, since this
new word is necessarily reduced.  On the other hand, in the
case $\d=-$, if step 2 ever results in a word with $a_t'=0$,
we must have $b_t'=0$ as well, so the algorithm will
immediately stop at step 3, and the $0$ will be deleted.
Note that it is also possible for a non-zero element of
$\bfa$ to be deleted at step 3, since $b_{i}'<a_{i}'$ is possible.
Thus, the bounded bumping algorithm clearly
terminates in a finite number of steps.

The proposition below collects several technical facts about the
bounded bumping algorithm that are analogous to facts proved by Little
about his algorithm \cite{little2003combinatorial}. These statements
may be checked by essentially the same arguments as in
\cite{little2003combinatorial} -- the inclusion of $\bfb$ has scant
effect here.

\begin{prop}\label{t:little}
Let $\bfa$ be a word that is nearly reduced at $t$, let $\bfb$ be a
bounded word for $\bfa$, and let $\d \in \{ +, - \}$.  Assume
$\Bump^{\d}_{t}(\bfa, \bfb) = (\bfa',\bfb',i,j,\outcome)$.
\begin{enumerate}
\item Suppose $\bfa$ is reduced. Then, Algorithm~\ref{algorithm:little
bump} is reversible in the sense that we can recover the inputs by
negating the direction $\d$. More specifically, if $\outcome=\deleted$,
then $\d=-1$ and $\Bump^{-\d}_{j}(\Insert_{j}^{i} \bfa',
\Insert_{j}^{0} \bfb') = (\bfa, \bfb, \bfa_{t}, t, \bumped)$; if
$\outcome=\bumped$, then $\Bump^{-\d}_j(\bfa', \bfb') = (\bfa, \bfb,
\bfa_{t}, t, \bumped)$.

\item \label{item:2} If $\bfa \in R(\pi)$, then
          $\Delete_t \bfa \in R(\pi t_{k,l})$, where $(k<l)$ is
    the inversion of $\pi$ whose wires cross in column
    $t$ of the right-labeled wiring diagram for $\bfa$.
    If $\outcome = \bumped$, then  $\bfa' \in R(\pi t_{k,l} t_{x,y})$ where
$\{x<y\}$
is the crossing in column $j$ of the word
    $\bfa'$ for $\pi t_{k,l} t_{x,y}$. Furthermore, if
    $\d=+$, then $l=x$. If $\d=-$, then $k=y$.

\item \label{item:3} 
Suppose $\Delete_j \mathbf{a} \in R(\nu)$. 
 After every iteration of step
    2 in the bounded bumping algorithm computing
    $\Bump^{\d}_{t}(\bfa, \bfb)$, the pair $(\Delete_t
    \bfa', \Delete_t \mathbf{b}')$ is a bounded pair for
    $\nu$.   In particular, if $\outcome = \deleted$, then
    $\bfa' \in R(\nu)$.

\item If $\outcome=\bumped$, then the input and output words $\bfa$
and $\bfa'$ have the same ascent set.  If $\outcome=\deleted$, then
the ascent set of $\Insert_{j}^{i}(\bfa')$ is the same as the ascent
set of $\bfa$.
\end{enumerate}
\end{prop}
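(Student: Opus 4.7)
My plan is to prove the four items together by running Little's analysis from~\cite{little2003combinatorial} while carrying along the bounded word $\bfb'$. Since step 2 applies $\Push^{\d}$ synchronously to $\bfa'$ and $\bfb'$, and $\bfb$ only influences execution through the extra stop rule in step 3, the wiring-diagram analysis that underlies Little's arguments is unchanged; the additional work is to check that the bounded-pair property is preserved and that reversibility accommodates the $\deleted$ case.

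The structural backbone I would establish first is the following invariant: at every iteration of the algorithm, the two wires crossing at the current column $t$ of $\bfa'$ form the same pair $\{\alpha,\beta\}$, and---so long as $\bfa'$ is not reduced---they also cross in the unique defect column $t' = \defect_t(\bfa')$. This is proved by induction from Lemma~\ref{lem:nearly_reduced}: pushing in direction $\d$ at column $t$ moves the crossing there by one row but does not change which wires meet, and the other crossing of $\{\alpha,\beta\}$ at $t'$ is untouched, so the resulting word is again nearly reduced at both $t$ and the new value of $t'$. A direct corollary is $\Delete_t\bfa' = \Delete_{t'}\bfa'$ as words, so the permutation $\nu$ represented by this common deletion is constant throughout the run. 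This invariant powers items (1)--(3).

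Item (1), reversibility, follows because the column sequence $t_0,t_1,\ldots$ is generated by an involutive $\defect$ rule, so $\Bump^{-\d}$ applied to the output revisits these columns in reverse order and undoes each $\Push^{\d}$. The bounded word tracks along, since each push modifies $b'_t$ alongside $a'_t$. The only subtlety is the $\deleted$ case: termination occurred because $b'_t$ reached $0$, and reinserting the saved row $i$ and a $0$ at column $j$ in $\bfa'$ and $\bfb'$ respectively reinstates the configuration just after the final push, after which the reverse algorithm unwinds as above. For item (2), the pair $\{\alpha,\beta\}$ is the inversion $(k,l)$ (with $k<l$), so $\Delete_t\bfa$ represents $\pi t_{k,l}$; in the bumped case the new crossing $\{x,y\}$ at the final column $j$ of $\bfa'$ involves exactly one wire from $\{k,l\}$ together with one fresh wire, and a wiring-diagram inspection yields the directional equalities: $\d=+$ drives the traveling crossing to a lower row, so the lower wire $l$ pairs with the fresh wire, forcing $l=x$; $\d=-$ gives the symmetric statement $k=y$. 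Item (3) then follows from the invariant together with the check that each push preserves $1\le b'_i\le a'_i$ at every $i\ne t$ and preserves it at $i=t$ unless $b'_t$ would become $0$, in which case the stop rule triggers and the deletion produces a bounded pair for $\nu$.

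For item (4), ascent preservation is a direct translation of Little's observation that across a full bumping run the row changes induced by consecutive pushes cancel when viewed in the wiring diagram; the bounded stopping rule and the $\bfb$-tracking are irrelevant to this argument. In the $\deleted$ case, inserting the saved row $i$ at column $j$ restores the word that existed immediately before step 3 fired, so the same cancellation argument applies. The main obstacle is item (2)'s directional identity $l=x$ or $k=y$: unlike the other parts it requires tracking the orientation of the traveling crossing carefully throughout the run, and I would handle it by a picture-based case analysis built on the structural invariant.
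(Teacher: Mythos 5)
The paper itself does not prove Proposition~\ref{t:little}; it only remarks that the four items ``may be checked by essentially the same arguments as in \cite{little2003combinatorial}'' because the addition of $\bfb$ has little effect. So your instinct to re-run Little's wiring-diagram analysis while carrying $\bfb$ along is exactly the intended strategy, and your handling of the bounded-word bookkeeping (synchronous pushes preserve $a'_i-b'_i$, the stop rule catches $b'_t=0$) and of reversibility in the $\deleted$ case is correct in spirit.

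However, your stated structural invariant is false, and this is a genuine gap. Pushing the crossing at column $t$ \emph{does} change which wires meet there: exactly one of the two wires is retained and the other is replaced by the wire in the adjacent row, so the pair $\{\alpha,\beta\}$ is not constant from one push to the next. Consequently the ``direct corollary'' that $\Delete_t\bfa' = \Delete_{t'}\bfa'$ \emph{as words} is also false. For a concrete counterexample take $\bfa' = (2,1,2,1)$, which is non-reduced with wires $2$ and $3$ crossing twice, at columns $t=1$ and $t'=4$; then $\Delete_1\bfa' = (1,2,1)$ and $\Delete_4\bfa' = (2,1,2)$, which are different words. What \emph{is} true, and is what you actually need, is that these two deletions represent the same permutation: if wires $\{\alpha,\beta\}$ cross at both $t$ and $t'$ and nowhere else twice, then $\Delete_t\bfa'$ and $\Delete_{t'}\bfa'$ have identical sets of wire-pair crossings and hence lie in $R(\nu)$ for the same $\nu$. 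The correct invariant is therefore ``$\Delete_t\bfa'$ (for the current active column $t$) always represents $\nu$,'' not equality of words nor constancy of the crossing pair. Your downstream use of the invariant in items (1)--(3) only ever needs the permutation-level version, so replacing the invariant with its correct form repairs the argument; but as written, the induction step (``does not change which wires meet'') is wrong, and the corollary drawn from it is wrong. For item~(2), the directional identities $l=x$ (for $\d=+$) and $k=y$ (for $\d=-$) do require tracking how the retained wire alternates across pushes, which is precisely the detail that your incorrect invariant would have allowed you to skip; with the corrected invariant you still need the per-push wire-replacement analysis you gesture at, so that part of your sketch should be regarded as unproved rather than deferred to ``picture-based case analysis.''
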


Note that in items (3) and (4) above, the word $\bfa$ is not
necessarily reduced.


\subsection{Pipe Dreams and Schubert Polynomials}\label{ss:pipe dreams and schubert polynomials}

Schubert polynomials $\mathfrak{S}_\pi$ for $\pi \in S_n$
are a generalization of Schur polynomials invented by
Lascoux and Sch\"utzenberger in the early 1980s \cite{LS1}.
They have been widely used and studied over the past 30
years.  An excellent summary of the early work on these
polynomials appears in Macdonald's notes \cite{M2}; see
Manivel's  book  \cite{manivel-book} for a more recent
treatment.

A \emph{pipe dream} $D$ is a finite subset of $\mathbb{Z}_+
\times \mathbb{Z}_+$.  We will usually draw a pipe dream as
a modified wiring diagram as follows.   Place a $+$ symbol
at every point $(i,j) \in D$; place a pair of elbows
$\elbows$ at every other point $(i,j) \in (\mathbb{Z}_+
\times \mathbb{Z}_+) \setminus D$, where again we use
matrix-style coordinates. This creates wires connecting
points on the left side of the diagram to points on the
top.  If the wires are numbered $1, 2, 3, \ldots$ down the left side, then the corresponding wires
reading along the top of the diagram from left to right form a
permutation $\pi$ of the positive integers that fixes all
but finitely many values. We call $\pi^{-1}$  the
\emph{permutation of $D$} following the literature.

We call the elements of a pipe dream $D \subset \mathbb{Z}_+ \times
\mathbb{Z}_+ $ \emph{crossings} or \emph{occupied positions}, and the
elements $(i,j)$ of $ \mathbb{Z}_+ \times \mathbb{Z}_+ \setminus D$
\emph{unoccupied positions}.  Each crossing involves two wires, which
are said to \emph{enter the crossing horizontally} and
\emph{vertically}.

Following the terminology for reduced words, we say that $D$ is
\emph{reduced} if $\pi$ is the permutation of $D$ and $\ell(\pi) =
|D|$.  We write $\rp(\pi)$ for the set of all reduced pipe dreams for
$\pi$.  Two wires labeled $i<j$ cross somewhere in $D \in \rp(\pi)$ if
and only if $(i,j)$ is an inversion of $\pi$.  Observe that the
smaller labeled wire necessarily enters the crossing horizontally in a
reduced pipe dream.

As mentioned earlier, we can identify the permutation of a pipe dream with one in
$S_n$, where all elements greater than $n$ are fixed.  We only need to
draw a finite number of wires in a triangular array to represent a
pipe dream since for all large enough wires there are no crossings.
See Figure~\ref{fig:rcgraphs} for an example.

\begin{figure}
\begin{center}
\begin{tikzpicture}[scale=0.3]
\begin{scope}[xshift=40em, scale=0.9, thick]
\begin{scope}[transparency group, opacity=0.75]
\draw[-stealth,line width=5pt, orange!50!yellow] (11, 10)-- (-3, 10);
\draw[-stealth,line width=5pt, orange!50!yellow]  (9, 8) -- (-3, 8);
\draw[-stealth,line width=5pt, orange!50!yellow]  (7, 6) -- (-3, 6);
\draw[-stealth,line width=5pt, orange!50!yellow]  (5, 4) -- (-3, 4);
\draw[-stealth,line width=5pt, orange!50!yellow]  (3, 2) -- (-3, 2);
\draw[-stealth,line width=5pt, orange!50!yellow]  (1, 0) -- (-3, 0);
\end{scope}\draw (-0.5,8) arc (270:360:0.5)
(0,8.5) -- (0,11.5);
\draw (-0.5,0) arc (270:360:0.5)
(0,0.5) -- (0,3.5)
(0,3.5) arc (180:90:0.5)
(0.5,4) -- (1.5,4)
(1.5,4) arc (270:360:0.5)
(2,4.5) -- (2,11.5);
\draw (-0.5,10) -- (3.5,10)
(3.5, 10) arc (270:360:0.5)
(4,10.5) -- (4,11.5);
\draw (-0.5,6) arc (270:360:0.5)
(0,6.5) -- (0,7.5)
(0,7.5)  arc (180:90:0.5)
(0.5,8) -- (3.5,8)
(3.5,8) arc (270:360:0.5)
(4,8.5) -- (4,9.5)
(4,9.5) arc (180:90:0.5)
(4.5,10) -- (5.5,10)
(5.5,10) arc (270:360:0.5)
(6,10.5) -- (6,11.5);
\draw (-0.5,2) -- (1.5,2)
(1.5,2) arc (270:360:0.5)
(2,2.5) -- (2,3.5)
(2,3.5) arc (180:90:0.5)
(2.5,4) -- (3.5,4)
(3.5,4) arc (270:360:0.5)
(4,4.5) -- (4,5.5)
(4,5.5) arc (180:90:0.5)
(4.5,6) -- (5.5,6)
(5.5,6) arc (270:360:0.5)
(6,6.5) -- (6,7.5)
(6,7.5) arc (180:90:0.5)
(6.5,8) -- (7.5,8)
(7.5,8) arc (270:360:0.5)
(8,8.5) -- (8,11.5);
\draw (-0.5,4) arc (270:360:0.5)
(0,4.5) -- (0,5.5)
(0,5.5) arc (180:90:0.5)
(0.5,6) -- (3.5,6)
(3.5,6) arc (270:360:0.5)
(4,6.5) -- (4,7.5)
(4,7.5) arc (180:90:0.5)
(4.5,8) -- (5.5,8)
(5.5,8) arc (270:360:0.5)
(6,8.5) -- (6,9.5)
(6,9.5) arc (180:90:0.5)
(6.5,10) -- (9.5,10)
(9.5,10) arc (270:360:0.5)
(10,10.5) -- (10,11.5);
\draw (8.6,10.9) node {\textbf{1}};
\draw (2.6,10.9) node {\textbf{2}};
\draw (0.6,10.9) node {\textbf{3}};
\draw (2.6, 8.9) node {\textbf{4}};
\draw (2.6, 6.9) node {\textbf{5}};
\draw (0.6, 2.9) node {\textbf{6}};
\end{scope}

\begin{scope}[scale=0.9,thick]
\draw (-0.5,8) arc (270:360:0.5)
(0,8.5) -- (0,11.5);
\draw (-0.5,0) arc (270:360:0.5)
(0,0.5) -- (0,3.5)
(0,3.5) arc (180:90:0.5)
(0.5,4) -- (1.5,4)
(1.5,4) arc (270:360:0.5)
(2,4.5) -- (2,11.5);
\draw (-0.5,10) -- (3.5,10)
(3.5, 10) arc (270:360:0.5)
(4,10.5) -- (4,11.5);
\draw (-0.5,6) arc (270:360:0.5)
(0,6.5) -- (0,7.5)
(0,7.5)  arc (180:90:0.5)
(0.5,8) -- (3.5,8)
(3.5,8) arc (270:360:0.5)
(4,8.5) -- (4,9.5)
(4,9.5) arc (180:90:0.5)
(4.5,10) -- (5.5,10)
(5.5,10) arc (270:360:0.5)
(6,10.5) -- (6,11.5);
\draw (-0.5,2) -- (1.5,2)
(1.5,2) arc (270:360:0.5)
(2,2.5) -- (2,3.5)
(2,3.5) arc (180:90:0.5)
(2.5,4) -- (3.5,4)
(3.5,4) arc (270:360:0.5)
(4,4.5) -- (4,5.5)
(4,5.5) arc (180:90:0.5)
(4.5,6) -- (5.5,6)
(5.5,6) arc (270:360:0.5)
(6,6.5) -- (6,7.5)
(6,7.5) arc (180:90:0.5)
(6.5,8) -- (7.5,8)
(7.5,8) arc (270:360:0.5)
(8,8.5) -- (8,11.5);
\draw (-0.5,4) arc (270:360:0.5)
(0,4.5) -- (0,5.5)
(0,5.5) arc (180:90:0.5)
(0.5,6) -- (3.5,6)
(3.5,6) arc (270:360:0.5)
(4,6.5) -- (4,7.5)
(4,7.5) arc (180:90:0.5)
(4.5,8) -- (5.5,8)
(5.5,8) arc (270:360:0.5)
(6,8.5) -- (6,9.5)
(6,9.5) arc (180:90:0.5)
(6.5,10) -- (9.5,10)
(9.5,10) arc (270:360:0.5)
(10,10.5) -- (10,11.5);
\draw (-1,0) node {\small{6}};
\draw (-1,2) node {\small{5}};
\draw (-1,4) node {\small{4}};
\draw (-1,6) node {\small{3}};
\draw (-1,8) node {\small{2}};
\draw (-1,10) node {\small{1}};
\draw (0,12.3) node {\small{2}};
\draw (2,12.3) node {\small{6}};
\draw (4,12.3) node {\small{1}};
\draw (6,12.3) node {\small{3}};
\draw (8,12.3) node {\small{5}};
\draw (10,12.3) node {\small{4}};
\end{scope}

\begin{scope}[xshift=80em, yshift=2ex, scale=1.75, thick]

\node at (-0.3,5) {$2$}; \node at (-0.3,4) {$6$}; \node at
(-0.3,3) {$1$}; \node at (-0.3,2) {$3$}; \node at (-0.3,1)
{$5$}; \node at (-0.3,0) {$4$};

\node at (6.3,5) {$1$}; \node at (6.3,4) {$2$}; \node at
(6.3,3) {$3$}; \node at (6.3,2) {$4$}; \node at (6.3,1)
{$5$}; \node at (6.3,0) {$6$};

\coordinate (0/6) at (0, 0); \coordinate (0/5) at (0, 1);
\coordinate (0/4) at (0, 2); \coordinate (0/3) at (0, 3);
\coordinate (0/2) at (0, 4); \coordinate (0/1) at (0, 5);

\coordinate (1/5) at (1, 0); \coordinate (1/6) at (1, 1);
\coordinate (1/4) at (1, 2); \coordinate (1/3) at (1, 3);
\coordinate (1/2) at (1, 4); \coordinate (1/1) at (1, 5);

\coordinate (2/5) at (2, 0); \coordinate (2/6) at (2, 1);
\coordinate (2/4) at (2, 2); \coordinate (2/2) at (2, 3);
\coordinate (2/3) at (2, 4); \coordinate (2/1) at (2, 5);

\coordinate (3/5) at (3, 0); \coordinate (3/6) at (3, 1);
\coordinate (3/4) at (3, 2); \coordinate (3/2) at (3, 3);
\coordinate (3/1) at (3, 4); \coordinate (3/3) at (3, 5);

\coordinate (4/5) at (4, 0); \coordinate (4/6) at (4, 1);
\coordinate (4/2) at (4, 2); \coordinate (4/4) at (4, 3);
\coordinate (4/1) at (4, 4); \coordinate (4/3) at (4, 5);

\coordinate (5/5) at (5, 0); \coordinate (5/2) at (5, 1);
\coordinate (5/6) at (5, 2); \coordinate (5/4) at (5, 3);
\coordinate (5/1) at (5, 4); \coordinate (5/3) at (5, 5);

\coordinate (6/2) at (6, 0); \coordinate (6/5) at (6, 1);
\coordinate (6/6) at (6, 2); \coordinate (6/4) at (6, 3);
\coordinate (6/1) at (6, 4); \coordinate (6/3) at (6, 5);

\draw (0/1) -- (1/1) -- (2/1) -- (3/1) -- (4/1) -- (5/1) --
(6/1); \draw (0/2) -- (1/2) -- (2/2) -- (3/2) -- (4/2) --
(5/2) -- (6/2); \draw (0/3) -- (1/3) -- (2/3) -- (3/3) --
(4/3) -- (5/3) -- (6/3); \draw (0/4) -- (1/4) -- (2/4) --
(3/4) -- (4/4) -- (5/4) -- (6/4); \draw (0/5) -- (1/5) --
(2/5) -- (3/5) -- (4/5) -- (5/5) -- (6/5); \draw (0/6) --
(1/6) -- (2/6) -- (3/6) -- (4/6) -- (5/6) -- (6/6);

\end{scope}
\end{tikzpicture}
 \caption{Left: a reduced pipe dream $D$ for
   $\pi=[3,1,4,6,5,2]=[2,6,1,3,5,4]^{-1}$.
The weight is $x^D=x_1^3 x_2 x_3
   x_5$.
Middle: the reading
   order for the crossings, with numbers indicating
   position in the order.  The resulting sequences of row
   numbers and column numbers are $\mathbf{i}_D=(1,1,1,2,3,5)$ and
   $\mathbf{j}_D=(5,2,1,2,2,1)$ respectively.  Right: the right-labeled
   wiring diagram of the
   associated reduced word
   $\mathbf{r}_D =(5,2,1,3,4,5) \in R(\pi)$.
   }
 \label{fig:rcgraphs}
\end{center}
\end{figure}
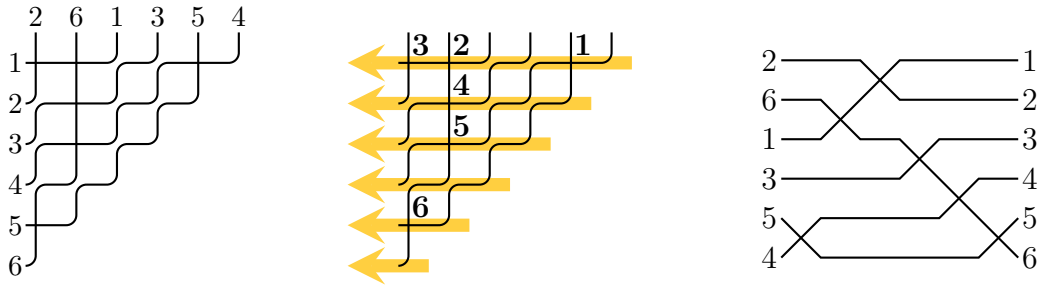

The \emph{weight} of a pipe dream $D$ is given by the product over row
numbers of the crossings
\[
 x^D := \prod_{(i,j) \in D} x_i
\]
where $x_1, x_2, \ldots$ are formal variables.  The Schubert
polynomial can be defined as a generating function for weighted
reduced pipe dreams as follows.

\begin{defn}\label{d:schubs}
The \emph{Schubert polynomial} of $\pi \in S_n$ is defined to be
\[
    \mathfrak{S}_{\pi}=\mathfrak{S}_{\pi}(x_1,x_2,\ldots,x_n) := \sum_{D \in \rp(\pi)} x^D.
\]
\end{defn}

For example, the second row of Figure~\ref{fig:bijection} shows pipe
dreams for 5 different permutations.  The pipe dream in the middle of
the figure is the unique reduced pipe dream for  $[2,3,1,4]=[3,1,2,4]^{-1}$  so
$\mathfrak{S}_{[2,3,1,4]}=x_1x_{2}$.  The pipe dream on the left for
$[1,4,3,2]$ is not the only one.  There are 5 pipe dreams for
$w=[1,4,3,2]$ in total and

\begin{equation}\label{ex:1}
\mathfrak{S}_{[1,4,3,2]} =  x_1^2 x_2 + x_1^2x_3 + x_1x_2^2 + x_1x_2x_3 + x_2^2x_3.
\end{equation}

There are many other equivalent definitions of Schubert polynomials~
\cite{billey-bergeron,BJS,FK,Fomin-Stanley,LS1,Weigandt-Yong.2015}.
Note that pipe dreams are also called \emph{pseudo-line} arrangements
and \emph{RC-graphs} in the literature.  See
\cite{knutson-miller-2005,Kogan.phd} for other geometric and algebraic
interpretations of individual pipe dreams.

The following theorem is an important tool for calculating Schubert
polynomials.  It is a recurrence based on the lexicographically
(lex)
largest inversion $(r,s)$ for $\pi$ assuming $\pi \neq
\id$, where as usual an inversion means $r<s$ and
$\pi(r)>\pi(s)$.  Note that $r$ is the position of the
largest descent in $\pi$, and $s$ is the largest value such
that $\pi(r) > \pi(s)$.  If $\bfa$ is a reduced word for
$\pi$, then there exists a unique column $t_0$ containing
the $\{r,s\}$-wire crossing in the right-labeled wiring
diagram for $\bfa$.  One can easily verify that $\ell(\pi
t_{rs})=\ell(\pi) -1$, and hence $\bfa$ is nearly reduced
in column $t_0$.  The original proof due to Lascoux and
Sch\"utzenberger \cite{LS2} uses Monk's formula for
computing products of Schubert classes in the cohomology
ring of the flag manifold.  See also \cite[4.16]{M2}. We
give a bijective proof using pipe dreams in the next
section.

\begin{samepage}
\begin{thm}[Transition Equation for Schubert polynomials;
\cite{LS2}] \label{t:transitionA}
For all permutations $\pi$ with $\pi \neq \id$, the
Schubert polynomial $\mathfrak{S}_{\pi}$ is determined by
the recurrence
\begin{equation}\label{e:transA}
\mathfrak{S}_{\pi}= x_{r}\mathfrak{S}_{\nu} + \sum_{\substack{
q<r:
\\
\ell(\pi)=\ell(\nu t_{qr})
}}
\mathfrak{S}_{\nu t_{qr}}
\end{equation}
where $(r,s)$ is the
lex
largest inversion in
$\pi$, and $\nu=\pi t_{rs}$.  The base case of the
recurrence is $\mathfrak{S}_{\id}=1$.
\end{thm}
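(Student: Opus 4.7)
My plan is to prove \eqref{e:transA} by constructing a weight-preserving bijection between $\rp(\pi)$ and the disjoint union of a marked copy of $\rp(\nu)$ (contributing $x_r\mathfrak{S}_\nu$) and the sets $\rp(\nu t_{qr})$ for each $q<r$ with $\ell(\nu t_{qr})=\ell(\nu)+1$. The pivot of the bijection is the crossing of wires $r$ and $s$ in a given pipe dream $D\in\rp(\pi)$: this crossing exists and is unique because $(r,s)$ is an inversion of $\pi$, and since $r<s$ the smaller label is horizontal at the crossing, forcing its row $i_0$ to satisfy $i_0\leq r$. Let $\bfa$ be the reading-order reduced word of $D$ and $t_0$ the column of this crossing in the wiring diagram of $\bfa$; then $a_{t_0}=i_0$, and $\bfa$ is nearly reduced at $t_0$ since removing the crossing yields a reduced word for $\nu=\pi t_{rs}$.

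The map proceeds by cases on whether $i_0=r$. In Case~A ($i_0=r$), simply remove the $+$ at $(r,j_0)$ to obtain $D'\in\rp(\nu)$ with $x^D=x_r\cdot x^{D'}$; the inverse reinserts a $+$ at the canonical column of row $r$ of $D'$ singled out by the maximality of $s$. In Case~B ($i_0<r$), apply a pipe-dream-level version of Little's bump. At the word level this corresponds to $\Bump^-_{t_0}(\bfa,\bfb)$ for a sufficiently large bounded word $\bfb$ (chosen to force $\outcome=\bumped$), and Proposition~\ref{t:little}(2) with $(k,l)=(r,s)$ and the $\d=-$ identity $k=y$ forces the output to lie in $R(\nu t_{qr})$ for some $q=x<r$, giving a pipe dream in $\rp(\nu t_{qr})$. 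The inverse in this case is $\Bump^+$ applied to the output word, as guaranteed by Proposition~\ref{t:little}(1).

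The main obstacles will be two. First, weight preservation in Case~B: word-level pushes visibly alter individual entries of $\bfa$, so I will need to translate the full bumping chain into a sequence of local geometric moves on $D$ and verify that the net effect is a relocation of the $(r,s)$-crossing within the same row $i_0$ (rather than a change of rows), so that $x^D$ is preserved. Second, ensuring the chain always terminates with $\outcome=\bumped$ (never $\deleted$), and produces $q<r$ satisfying $\ell(\nu t_{qr})=\ell(\nu)+1$: both will follow from the maximality of $(r,s)$, since $r$ being the largest descent of $\pi$ and $s$ being maximal among inversion-partners of $r$ confines the bumping chain and precludes it from either escaping to row $0$ or landing on a forbidden inversion. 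Combined with the straightforward Case~A, these verifications, together with Proposition~\ref{t:little}(1), establish the bijection and hence \eqref{e:transA}.
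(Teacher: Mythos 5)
Your case division is the wrong dichotomy, and this is a fatal gap rather than a fixable detail. You split $\rp(\pi)$ according to whether the $\{r,s\}$-crossing sits in row $r$ (Case~A, mapped to $\rp(\nu)$ by direct removal) or in a row $i_0<r$ (Case~B, mapped to $\bigsqcup_q\rp(\nu t_{qr})$ by a bump forced to end in $\bumped$). The correct dichotomy is instead the \emph{outcome} of the bounded bump run with $\bfb=\mathbf{j}_D$: the cascade starting at the $\{r,s\}$-crossing can begin in a row $i_0<r$, migrate through defects into row $r$, and only then push a crossing out through column $1$; such a $D$ must be sent to $\rp(\nu)$ even though its $\{r,s\}$-crossing is not in row $r$. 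Concretely, take $\pi=[1,4,2,3]$, so $(r,s)=(2,4)$, $\nu=[1,3,2,4]$, and $\mathfrak{S}_\pi=x_2(x_1+x_2)+x_1^2$. The pipe dream $D=\{(1,3),(2,1)\}$ has weight $x_1x_2$ and its $\{2,4\}$-crossing is at $(1,3)$, in row $1<r$, so it falls into your Case~B; but the only summand of \eqref{e:transA} containing the monomial $x_1x_2$ is $x_2\mathfrak{S}_{[1,3,2,4]}$, since $\mathfrak{S}_{\nu t_{12}}=\mathfrak{S}_{[3,1,2,4]}=x_1^2$. Indeed your Case~A here consists of the single pipe dream $\{(2,1),(2,2)\}$ and cannot surject onto the two elements of $\rp([1,3,2,4])$, while your Case~B sends two pipe dreams, $\{(1,2),(1,3)\}$ and $\{(1,3),(2,1)\}$, toward the one-element set $\rp([3,1,2,4])$, so it cannot be injective or weight-preserving.

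Two further points. First, forcing $\outcome=\bumped$ by enlarging $\bfb$ does not salvage Case~B: on the example above the bump ends by pushing the crossing at $(2,1)$ to column $0$, so the output word $(2,1)\in R([3,1,2,4])$ no longer encodes a pipe dream and has the wrong weight. The whole point of taking $\bfb=\mathbf{j}_D$ is that hitting $b'_t=0$ is exactly the geometric event ``pushed out through column $1$,'' which Lemma~\ref{l:deleted.step} shows can happen only at position $(r,1)$ and which is precisely what produces the $x_r\mathfrak{S}_\nu$ term. Second, your picture of the bump as ``relocating the $(r,s)$-crossing within row $i_0$'' is inaccurate: the cascade is a sequence of stack pushes moving many crossings across many rows (see Figure~\ref{fig:pipe dream bump}); weight is preserved only because each individual push fixes $r_k-j_k$ and hence the row of whichever crossing it moves. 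Your observation that a row-$r$ crossing of wires $r$ and $s$ forces columns $1,\ldots,y-1$ of row $r$ to be occupied is correct, and it is in fact the key step in the proof of Lemma~\ref{l:deleted.step}; but it must be used to characterize where a deletion can terminate, not to define the partition of $\rp(\pi)$.
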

\end{samepage}

Continuing the example above, the lex largest inversion for
$w=[1,4,3,2]$ is $(3,4)$ so
$$
\mathfrak{S}_{[1,4,3,2]} =  x_3 \mathfrak{S}_{[1,4,2,3]} + \mathfrak{S}_{[2,4,1,3]}.
$$
The lex largest  inversion for $[2,4,1,3]$ is $(2,4)$ so
$$
\mathfrak{S}_{[2,4,1,3]} =  x_2 \mathfrak{S}_{[2,3,1,4]} + \mathfrak{S}_{[3,2,1,4]}.
$$ If we continue to use the Transition Equation, we find
$\mathfrak{S}_{[3,2,1,4]} = x^2_1 x_2$, $\mathfrak{S}_{[2,3,1,4]}= x_1
x_2$ and $\mathfrak{S}_{[1,4,2,3]}= x_2 \mathfrak{S}_{[1,3,2,4]}+
\mathfrak{S}_{[3,1,2,4]}$ $= x_2( x_1 + x_2) +x_1^2$.  Therefore, we
can rederive \eqref{ex:1} via the Transition Equation as well.

\begin{defn}\label{defn:lex.largest.inversion.order}
We define the \emph{inversion order} $\lessthan$ on
permutations as follows. Given $\pi \in S_{\infty}$, let
$\Inv(\pi)$ be the ordered list of inversions in reverse
lex order.  Note, $\Inv(\pi)$ begins with the lex largest
inversion of $u$.  Then, for $\tau,\pi \in S_{\infty}$, we
say $\tau \lessthan \pi $ provided $\Inv(\tau ) < \Inv(\pi)
$ in lex order as lists.  For example,
$\Inv([1432])=((3,4),(2,4),(2,3))$ and
$\Inv([2413])=((2,4),(2,3),(1,3))$, so $[2413]\prec[1432]$.
\end{defn}

\begin{remark}\label{rem:lex.largest.inversion.order}
  All of the permutations on the right hand side of \eqref{e:transA}
are strictly smaller than $\pi$ in inversion order by construction.
Furthermore, the permutations on the right hand side of
\eqref{e:transA} are in $S_{n}$ provided $\pi \in S_{n}$. Hence there
are only a finite number of terms in the expansion of a Schubert
polynomial.  We will apply induction over this finite set in the
bijective proofs that follow.
\end{remark}

\section{Bijective proof of the Transition Equation}\label{s:trans}

In this section, we give a bijective proof of the Transition Equation
for Schubert polynomials, Theorem~\ref{t:transitionA}.  The Transition
Algorithm described here is a key tool for proving
Theorem~\ref{t:macdonald}.  We begin by describing how the bounded
bumping algorithm acts on reduced pipe dreams.

A pipe dream for a permutation $\pi$ may be interpreted as a bounded
pair of a special type for the same $\pi$.  To make this more precise,
order the crossings in $D$ in the order given by reading rows from top
to bottom, and from right to left within each row.  We call this the
\emph{reading order} on $D$.   We construct three words from the ordered list
of crossings: the row numbers of the crossings $\mathbf{i}_{D} =
(i_{1}, i_{2},\ldots, i_{p})$, the column numbers $\mathbf{j}_{D} =
(j_{1}, j_{2},\ldots, j_{p})$ and the diagonal numbers $\mathbf{r}_{D}
= (i_{1}+j_{1}-1,i_{2}+j_{2}-1,\ldots, i_{p}+j_{p}-1) =
\mathbf{j}_D+\mathbf{i}_D -\mathbf{1}$.  Any two of
$\mathbf{i}_D,\mathbf{j}_D,\mathbf{r}_D$ suffice to determine $D$.  In
this paper, we will encode $D$ by the \emph{biword}
$(\mathbf{r}_{D},\mathbf{j}_{D})$ departing from the literature which
typically uses $(\mathbf{r}_{D},\mathbf{i}_{D})$.

If $D$ is a pipe dream for $\pi$ then it is easy to see that
$\mathbf{r}_{D}$ is a word for $\pi$.  And $D$ is reduced if and only
if $\mathbf{r}_{D}$ is.  Furthermore, the column numbers
$\mathbf{j}_{D}$ always form a bounded word for $\mathbf{r}_{D}$.
Although the bounded pair $(\mathbf{r}_{D}, \mathbf{j}_{D})$
determines $D$, not every bounded pair corresponds to a pipe dream.
In fact, a bounded pair $(\mathbf{a}, \mathbf{b})=((a_{1},\ldots ,
a_{p}), (b_{1},\ldots, b_{p}))$ corresponds to a pipe dream if and
only if the list $[(i_1,b_1),\ldots,(i_p,b_p)]$ has $p$ distinct
elements  listed in the reading order, where $i_k=a_k-b_k+1$.
Equivalently, $(\mathbf{a}, \mathbf{b})$ corresponds to a pipe dream
if and only if the pairs $(i_1,-b_1),\ldots,(i_p,-b_p)$ are in
strictly increasing lex order.

For example, Figure~\ref{fig:rcgraphs} shows the pipe dream $D$
corresponding with reduced word $\mathbf{r}_{D}=(5,2,1,3,4,5)$, row
numbers $\mathbf{i}_{D}=(1,1,1,2,3,5)$, and column numbers
$\mathbf{j}_{D}=(5,2,1,2,2,1)$.

Using the biword $(\mathbf{r}_{D},\mathbf{j}_{D})$ to encode a reduced
pipe dream $D$, we can apply the bounded bumping algorithm to $D$ in
either direction and for any $t_0$ where $\mathbf{r}_D$ is nearly
reduced.  One can observe that the bounded pairs encountered during
the steps of the bounded bumping algorithm do \emph{not} all encode
pipe dreams, but it will turn out that the departures from ``pipe
dream encoding status'' are temporary, and have a straightforward
structure that will be analyzed in the proof of
Lemma~\ref{l:stack.push} below.

\begin{lem}\label{l:stack.push}
Let $D$ be a reduced pipe dream and suppose that $r_{D}$ is
nearly reduced at $t$.  Let $\d\in \{+,- \}$ and write
$$\Bump^{\d}_{t}(\mathbf{r}_{D}, \mathbf{j}_{D})=
(\bfa',\bfb', i, j, \outcome).$$ Then the bounded pair
$(\bfa',\bfb')$ also encodes a reduced pipe dream.
\end{lem}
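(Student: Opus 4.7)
My plan is to reinterpret the algorithm geometrically as a sequence of horizontal slides of individual crossings in the pipe dream. The key preliminary observation is that $\Push^{\d}_t$ applied simultaneously to both $\bfa'$ and $\bfb'$ changes $a'_t$ and $b'_t$ by the same amount $\d$, so the row $i'_t = a'_t - b'_t + 1$ of the $t$-th crossing (in the current reading order) is preserved while its column $b'_t$ shifts by $\d$. Hence each push slides the $t$-th crossing one cell to the left (for $\d = -$) or to the right (for $\d = +$) within its row of the pipe dream, leaving all other crossings fixed.

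The proof would then proceed by induction on the number of pushes, maintaining the following invariant: at every stage, the biword either (a) encodes a valid, though possibly non-reduced, pipe dream, or (b) contains exactly one pair of positions sharing a cell, namely $t$ and its immediate predecessor (for $\d = +$) or successor (for $\d = -$) in the reading order, where $t$ is the column just pushed. A one-step case analysis shows that a push from state (a) either remains in (a), when the target cell is unoccupied (the only potentially affected order comparisons involve the neighbors of $t$, and these are easily checked), or enters state (b), when the target cell was already occupied, which by the reading order must be the adjacent cell in the same row. In state (b), a direct computation using $a'_k = i'_k + b'_k - 1$ shows that $a'_t = a'_{t'}$ at the colliding positions $t, t'$, so $\bfa'$ is non-reduced with $\defect_t(\bfa') = t'$. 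Thus step 5 forces the next push to be at $t'$, which slides the collided crossing one further cell along the row and either restores state (a) or produces a new collision at $t', t''$ one position further along, preserving the invariant.

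For termination, both exit branches follow cleanly from the invariant. If the algorithm exits at step 4 (bumped), then $\bfa'$ is reduced, precluding any collision, so by the invariant the output encodes a valid pipe dream. If it exits at step 3 (deleted), the pushed crossing has slid off the left edge ($b'_t = 0$); since any pending collision involves this crossing by the invariant, deleting it simultaneously clears the collision, and the remaining $p - 1$ entries still satisfy the pipe dream condition. Proposition~\ref{t:little}(3) then supplies the reducedness of the output word.

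The principal obstacle I anticipate is verifying that the defect column of the wiring diagram of $\bfa'$ always coincides with the collision position in the biword. This amounts to showing that the non-reducedness of $\bfa'$ in state (b) is entirely local, produced by the duplicated entries $a'_t = a'_{t'}$ at adjacent wiring-columns, with no other non-reduced obstruction appearing elsewhere in the word during the intermediate steps. Establishing this localization requires careful bookkeeping of how the reading order on pipe-dream cells interacts with the wiring-column order, but it should ultimately follow from the at-most-one-collision invariant together with the fact that the input pipe dream $D$ is reduced.
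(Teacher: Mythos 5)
Your proposal is correct and matches the paper's proof in all essentials: the paper also interprets a push as a one-cell slide of a crossing within its row (since $\bfa' - \bfb'$, hence the row number, is preserved), observes that a collision must involve the adjacent reading-order position in the same row, and groups the resulting defect chain into a single ``stack push'' along that row which resolves into either a valid pipe dream or a deletion at column~$0$. The obstacle you flag --- checking that the defect column always coincides with the collision position --- is not a genuine difficulty: adjacent duplicate letters $a'_t = a'_{t\pm 1}$ already exhibit the two wires crossing again at the neighboring column, and since $\bfa'$ remains nearly reduced at $t$ throughout the run, the uniqueness in Lemma~\ref{lem:nearly_reduced} forces $\defect_t(\bfa')$ to be exactly that neighboring column, with no other obstruction possible.
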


\begin{proof}
Consider the effect of the bounded bumping algorithm in terms of pipe
dreams.  To be concrete, assume $\d=-$, the case $\d=+$ being similar.
Observe that when we initially decrement-push
$(\mathbf{r}_{D},\mathbf{j}_{D}) $ in column $t$, it has the effect of
moving the $t^{th}$ crossing in the reading order on $D$, say in
position $(i,j)\in D$, one column to the left to position
$(i,j-1)$. If this location is already occupied, $(i,j-1) \in D$, then
$\Push_{t}^{-} r_{D}$ returns a nearly reduced word with identical
letters in positions $t$ and $t+1$.  The resulting bounded pair does
not encode a pipe dream.  Then,
the
next step of the bounded bumping
algorithm will decrement-push at $t+1$. If $(i,j-2) \in D$ also, then
$\bfa'= \Push_{t+1}^{-}\Push_{t}^{-} r_{D}$ will again have duplicate
copies of the letter $i+j-1$ in positions $t+1$ and $t+2$ so the next
decrement-push will be in position $t+2$, and so on.  Note that since
the algorithm decrement-pushes both of the words in the bounded pair
in the same position at each iteration, the entrywise differences
$\bfa'-\bfb' = \mathbf{r}_{D} - \mathbf{j}_{D}$
agree, so the
original row numbers $\mathbf{i}_{D}$
are maintained
 unless a deletion occurs.

We can group the push steps along one row so a decrement-push in
position $(i,j)$ pushes all of the adjacent $+$'s to its left over by
one as a stack.  Thus, the effect of the bounded bumping algorithm on
the pipe dream amounts to a sequence of such ``stack pushes''.  If at
the end of a stack push, a $+$ in column 1 of the pipe dream is
decrement-pushed, the bounded bump algorithm terminates by deleting
that position because there will be a 0 in the bounded word.
Otherwise, a stack push ends with a bounded pair that corresponds to a
pipe dream, which may or may not be reduced. If it is reduced, the
algorithm stops and returns $\outcome=\bumped$.  Otherwise, we find
the defect and continue with another stack push in a different row.
In either case, the final bounded pair $(\bfa',\bfb')$ encodes a reduced pipe dream.
\end{proof}

Next we give the promised bijective proof of the Transition Equation
for Schubert polynomials using pipe dreams.  The bijection we give was
independently observed by Anders Buch \cite[p.11]{Knutson.2012}.  The
proof will involve several technical steps,
Lemmas~\ref{l:deleted.step} to \ref{l:bumped.step.+}, which are stated
and proved after the main argument.

\begin{proof}[Proof of Theorem~\ref{t:transitionA}]
In the case $\pi = \id$, we have $\mathfrak{S}_{\pi}=1$ so the theorem holds
trivially.  Assume $\pi \neq \id$.  Recall $\nu = \pi t_{r,s}$, and let
\begin{equation}\label{eq:Tset}
  \Tset(\pi):= \rp(\nu) \cup \bigcup_{\substack{ q<r:
      \\
      \ell(\pi)=\ell(\nu t_{qr}) }}
  \rp(\nu t_{qr}).
\end{equation}
We think of $\nu = \nu t_{r,r}$ so each pipe dream in $\Tset(\pi)$ is
for a permutation of the form $\nu t_{q,r}$ with $1\leq q\leq r$,
though not all such $\nu t_{q,r}$ necessarily occur.

By definition, the left side of \eqref{e:transA} is the sum of $x^D$
over all $D\in \rp(\pi)$.  Similarly, the right side can be expressed
as a sum over all reduced pipe dreams $E \in \Tset(\pi)$.  Each such
$E$ contributes either $x_{r} x^E$ or $x^E$ respectively to the sum on
the right side.  We will give a bijection $\TransitionMap: \rp(\pi)
\longrightarrow \Tset(\pi)$ that preserves weight, except in the cases
$\TransitionMap(D)=E \in \rp(\nu)$, where the weight will change by $x_r$, so $x^D
= x_r x^E$.

\begin{algorithm}[Transition Map] \label{algorithm:TransitionMap}\
Suppose $\pi \neq \id$ is given, and let $(r,s)$ and $\nu$
be defined as in Theorem~\ref{t:transitionA}.

 \noindent \textbf{Input}: $D$, a non-empty reduced pipe dream for
 $\pi$ encoded as the biword $(\mathbf{r}_{D},\mathbf{j}_{D})$.

\noindent \textbf{Output}: $\TransitionMap(D) = E \in \Tset(\pi)$.

\begin{enumerate}

\item  Let $t_0$ be the unique column containing the $\{r,s\}$-wiring
   crossing in the right-labeled wiring diagram for $\mathbf{r}_D$.

\item Compute $\Bump^{-}_{t_0}(\mathbf{r}_{D}, \mathbf{j}_{D}) =
  (\bfa',\bfb', i, j, \outcome).$

\item If $\outcome=\deleted$, then we will show in
  Lemma~\ref{l:deleted.step} that $i=r-1$, $j=\ell(\pi)$, and
  $(\mathbf{a}', \bfb')$ encodes a pipe dream $E \in\rp(\nu) \subset
  \Tset(\pi)$.  Return $E$ and \textbf{stop}.

  \item If $\outcome=\bumped$, then we will show in
   Lemma~\ref{l:bumped.step} that $(\mathbf{a}', \bfb')$ encodes a
    pipe dream $E \in\rp(\nu t_{qr})$ for some $q<r$ with
    $\ell(\pi)=\ell(\nu t_{qr})$.  Thus, $E \in \Tset(\pi)$.  Return
    $E$ and \textbf{stop}.
\end{enumerate}
 \end{algorithm}

See Example~\ref{ex:T} below.  The inverse map
$\TransitionMap^{-1}(E)$ again has  two cases.

\begin{algorithm}[Inverse Transition Map] \label{algorithm:invTransitionMap}\
Suppose $\pi \neq \id$ is given, and let $(r,s)$ and $\nu$
be defined as in Theorem~\ref{t:transitionA}.

   \noindent \textbf{Input}: $E \in \Tset(\pi)$ a reduced pipe dream
   encoded by the biword $(\mathbf{r}_{E},\mathbf{j}_{E})$. In particular
   $E \in \rp(\nu t_{q,r})$ for some $1 \leq q \leq r$.

 \noindent \textbf{Output}: $\TransitionMap^{-1}(E) = D$, a reduced
 pipe dream for $\pi$.

\begin{enumerate}

  \item If $q=r$, then $E \in \rp(\nu )$.  Set $j \leftarrow
    \ell(\pi)$, $\mathbf{g} \leftarrow \Insert_{j}^{r-1}(
    \mathbf{r}_{E}),\,$ $\mathbf{h} \leftarrow \Insert_{j}^{0}
    (\mathbf{j}_{E}).$

\item If $q<r$, set $\mathbf{g} \leftarrow \mathbf{r}_{E}$ and $\mathbf{h}
\leftarrow \mathbf{j}_{E}$.  Let $j$ be the column containing the
$\{q,r\}$-crossing in the right-labeled wiring diagram of
$\mathbf{r}_{E}$ which must exist since $E \in \rp(\nu t_{q,r})
\subset \Tset(\pi)$.

\item Compute $\Bump^{+}_j(\mathbf{g}, \mathbf{h}) =
  (\mathbf{g}',\mathbf{h}', i', t, \bumped)$.  Here the outcome will
  always be $\bumped$ since we are applying
  increment-pushes. Lemma~\ref{l:bumped.step.+} below shows that
  $(\mathbf{g}',\mathbf{h}')$ encodes a pipe dream $D \in \rp(\pi)$.

\item Return $D$ and \textbf{stop}.
\end{enumerate}
\end{algorithm}

We claim that $\TransitionMap$ is an injection.  First note that, by
Proposition~\ref{t:little}(1), the bounded bumping algorithm is
reversible given the column $j$ of the final push and, in addition, in
the case the outcome is $\deleted$, the value $i$ of the letter
omitted.  Thus, to prove injectivity, let $E \in \Tset(\pi)$, then $E
\in \rp(\nu t_{q,r})$ for some $q \leq r$.  We need to show $i$ and
$j$ can be determined from $q$.

If $q=r$, then the final push in the bounded bumping algorithm was in
the last position so $i=r-1$ and $j=\ell(\pi)$. If $q<r$ then
$(q,r)$ is an inversion in $\pi t_{r,s} t_{q,r}$.
Lemma~\ref{l:bumped.step} shows that $j$ is determined by the unique
column of the right-labeled wiring diagram of $r_E$ containing the
$\{q,r\}$-wire crossing which must exist since it corresponds to an
inversion.  Thus, $\TransitionMap^{-1}\TransitionMap(D) = D$ so if
$\TransitionMap(D)=E=\TransitionMap(D')$ then $D=D'$.

Similarly, for all $E \in \Tset(\pi)$ we have $\TransitionMap
\TransitionMap^{-1}(E) = E$ so $\TransitionMap$ is surjective.
Therefore, $\TransitionMap$ is a bijection.

Finally, we show that $\TransitionMap$ is weight preserving.  Say
$D\in \rp(\pi)$ and $\TransitionMap(D)=(E,(q,r))$.  Recall that if the
row numbers $\mathbf{i}_D = (i_1,\ldots,i_p)$, then $x^D=x_{i_1}\cdots
x_{i_p}$.  The row numbers are determined by $\mathbf{i}_D =
\mathbf{r}_D - \mathbf{j}_D + \mathbf{1}$ so they are preserved by
each push step in the bounded bumping algorithm since $r_k - j_k$ is
preserved for each $k$.  If $q<r$, then the algorithm terminates with
$x^D=x^E$.  If $q=r$ then the algorithm terminates when the $p$th position is
deleted and at that point $i_p=(r-1)-0+1=r$ so $x^D=x_{r} x^E$.
\end{proof}

\begin{example}\label{ex:T}
If $D$ is the pipe dream on the left in Figure~\ref{fig:bijection},
then the corresponding permutation is $\pi=[1,4,3,2]$. The
lex largest inversion of $\pi$ is $(3,4)$ so $\nu = \pi
t_{3,4} =[1,4,2,3]$.  The $\{3,4\}$-crossing is circled.  Using the
biword encoding of $D$, we have $\mathbf{r}_D=(2,3,2)$ and
$\mathbf{j}_D=(2,2,1)$.  To compute $\TransitionMap(D)$, we initiate
the bump at $t_0=1$ since the $\{3,4\}$-crossing is first in reading order on
$D$.
\[\Bump^{-}_{1}((2,3,2),(2,2,1)) = ((1,3,2),(1,2,1),1,1,\bumped).\]
It requires just one push map since $(1,3,2)$ is reduced.  The
crossing in column 1 in the right-labeled wiring diagram for $(1,3,2)$
is between wires $(1,3)$.  Thus, $\TransitionMap(D)=E \in \rp(\nu
t_{1,3})$ where $E$ is the pipe dream encoded by
$\mathbf{r}_E=(1,3,2)$ and $\mathbf{j}_E=(1,2,1)$.  Observe that $E$
is the second pipe dream in Figure~\ref{fig:bijection}.

\end{example}

\begin{example}
  In Figure~\ref{fig:pipe dream bump}, we give a more complicated
example of computing $\TransitionMap(D)$.  Note that a defect can
occur either above or below the pushed crossing.  Going from the
fourth to the fifth pipe dream, two consecutive pushes on the same row
are combined into one step.  This is an example of a nontrivial
``stack push''.
\end{example}

\begin{figure}
\begin{center}
\includegraphics[width=\textwidth]{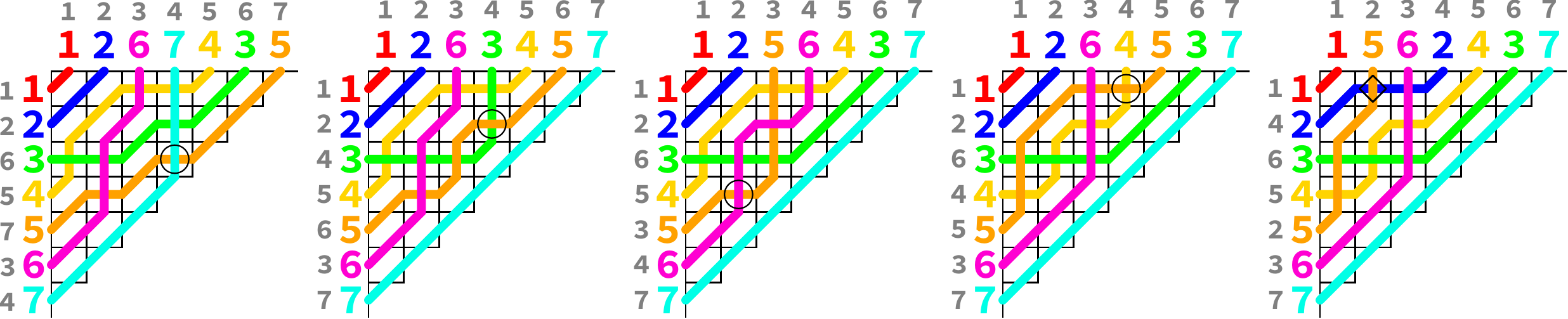}
\caption{If $D$ is the pipe dream on the left, then
  $\TransitionMap(D)$ is the pipe dream on the right. In between we
  show the stack pushes in the bounded bumping algorithm.  The crossing initiating a stack push
  is circled for each step, and the final crossing which
  moved in the last step is marked with a diamond. Here,
  $\pi=[1265734]$, hence $\pi^{-1} = [1267435]$, $r=5, s=7$,
  $\nu = [1265437]$.  In this case, $\TransitionMap(D)$ is a pipe
  dream for $\nu t_{25}=[1465237]$ so $q=2$.
  \label{fig:pipe dream  bump}}
\end{center}
\end{figure}


\begin{lem}~\label{l:deleted.step}
  Assume the notation in Theorem~\ref{t:transitionA} and the
  definition of the transition map $\TransitionMap$. Let $D \in
  \rp(\pi)$.  If
  $$
  \Bump^{-}_{t_0}(\mathbf{r}_{D}, \mathbf{j}_{D}) = (\bfa',\bfb', i,
  j, \deleted),
  $$ then $i=r-1$, $j=\ell(\pi)$,  and $(\mathbf{a}', \bfb')$ encodes a
  pipe dream $E \in\rp(\nu)$ so $\TransitionMap(D) \in \Tset(\pi)$.
\end{lem}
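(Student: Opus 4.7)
The plan is to verify the three conclusions of the lemma in order: that $(\bfa',\bfb')$ encodes a pipe dream $E\in\rp(\nu)$, that $i=r-1$, and that $j=\ell(\pi)$.

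First, for $E\in\rp(\nu)$: by definition of $t_0$, the wires $r<s$ are exactly the pair that crosses at column $t_0$ in the right-labeled wiring diagram of $\mathbf{r}_D$, so Proposition~\ref{t:little}(2) with $(k,l)=(r,s)$ gives $\Delete_{t_0}\mathbf{r}_D\in R(\pi t_{r,s})=R(\nu)$. Since $\outcome=\deleted$, Proposition~\ref{t:little}(3) then implies $\bfa'\in R(\nu)$. By Lemma~\ref{l:stack.push}, the pair $(\bfa',\bfb')$ encodes a reduced pipe dream, which must therefore lie in $\rp(\nu)\subset\Tset(\pi)$.

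Next, for $i=r-1$ and $j=\ell(\pi)$: I would dissect the last iteration of step~2 of Algorithm~\ref{algorithm:little bump}. The only way step~3 can trigger is if the push at some column $t$ takes $\bfb'_t$ from $1$ to $0$; in this case $j=t$, and the push simultaneously takes $\bfa'_t$ from some value $i+1\geq 1$ to $i$, so the deleted crossing lies in row $i+1$ and column~$1$ of the intermediate encoding. The remaining content of the lemma is that this row equals $r$ and this word-position equals $\ell(\pi)$. For the row claim, I would exploit the stack-push picture from the proof of Lemma~\ref{l:stack.push}: each stack push is confined to a single row, and I would prove an inductive invariant along the sequence of stack pushes showing that wire~$r$ always participates in the crossing currently being pushed (since the bump is initiated at the $\{r,s\}$-crossing and the $\d=-$ convention propagates the defect along wire~$r$). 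Because wire~$r$ enters any pipe dream crossing horizontally and begins at row~$r$ on the left boundary, the only column-$1$ crossing that can involve wire~$r$ is the one at $(r,1)$; hence the deleted crossing must sit at $(r,1)$ and $i+1=r$. For the word-position claim, I would use that $r$ is the largest descent of $\pi$, so the intermediate pipe dream never contains a crossing strictly below row~$r$; together with the location $(r,1)$ of the deleted crossing, this forces it to be last in the reading order, so $t=\ell(\pi)$.

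The main obstacle is the wire-$r$ invariance: making precise the claim that wire~$r$ participates in the pushed crossing at every stage, especially within stack pushes consisting of many consecutive same-row pushes. I expect to prove this via a short preliminary observation about a single step~2 iteration in the $\d=-$ direction, mirroring the $\d=-$ case of Proposition~\ref{t:little}(2) at the per-push level, and then iterate it across successive stack pushes.
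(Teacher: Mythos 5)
Your proposal matches the paper's proof closely: the same combination of Proposition~\ref{t:little}(3) and Lemma~\ref{l:stack.push} establishes $E\in\rp(\nu)$, and the same stack-push invariant (wire $r$ remaining the horizontal wire of the pushed crossing) forces the deleted crossing to sit at position $(r,1)$, after which the observation that $r$ is the last descent of $\pi$ gives $j=\ell(\pi)$. One caution: your phrase ``wire $r$ enters any pipe dream crossing horizontally'' should be read as exactly the invariant to be proved along the bump, since as a general statement it fails whenever wire $r$ crosses a wire with a smaller label.
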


\begin{proof}
The fact that $(\mathbf{a}', \bfb')$ encodes a pipe dream $E
\in\rp(\nu)$ follows directly from
Proposition~\ref{t:little}(\ref{item:3}),
Lemma~\ref{l:stack.push} and the construction of $\nu$ and $t_0$.

To show $i=r-1$ and $j=\ell(\pi)$, it suffices to prove that the last
step of the bounded bumping algorithm stack pushed a crossing in
position $(r,1)$ into the 0th column and out of the pipe dream.  This
is because $r$ is the last descent of $\pi$ so there cannot be any
crossings in the reading order on $D$ after $(r,1)$.

Say the $\{r,s\}$-wire crossing in $D$ is in row $x$ column $y$.  The
bumping algorithm is initiated with a stack push to the left starting
at position $(x,y)$.  The $r$-wire enters $(x,y)$ horizontally since
$r<s$ and $D$ is a reduced pipe dream.  If there is no empty position
in row $x$ to the left of column $y$, then $x=r$ since the wires are
labeled increasing down the left side of $D$ by definition of a pipe
dream.  Otherwise, if $y'$ is the largest column such that $y'<y$ and
$(x,y') \not \in D$, then the initial decrement-stack push on $D$
would result in the pipe dream $E=D-{(x,y)}\cup {(x,y')}$.  Let $s'$
be the wire crossing with the $r$-wire at $(x,y')$ in $E$.  Now, $E$
cannot be reduced since $\outcome=\deleted$ in the bounded bumping
algorithm.  So by Lemma~\ref{lem:nearly_reduced}, we know there exists
exactly one other position in $E$ where wires $r$ and $s'$ cross, say
in position $(x'',y'')$.  By analyzing the possible wire configurations
for two wires to cross exactly twice in a pipe dream, we see that the
$r$ wire is the horizontal wire crossing in the defect position
$(x'',y'')$ in $E$.  Furthermore,
$D -(x,y),\ E -(x,y'),\ E -(x'',y'') \in
\rp(\nu)$.

Update $x \leftarrow x''$, $y \leftarrow y''$ and apply another stack push at
$(x,y)$.  Recursively applying the same argument we see that the only
crossing in $D$ that can be pushed into column $0$ must be in position
$(r,1)$.
\end{proof}


\begin{lem}\label{l:bumped.step}
  Assume the notation in Theorem~\ref{t:transitionA} and the
  definition of the transition map $\TransitionMap$. Let $D \in \rp(\pi)$. While computing $\TransitionMap(D)$,   if
  \[
  \Bump^{-}_{t_0}(\mathbf{r}_{D}, \mathbf{j}_{D}) = (\bfa',\bfb', i,
  j, \bumped),
  \]
  then the crossing in column $j$ of the right-labeled wiring
  diagram of $\mathbf{a}'$ corresponds with the $r$-wire and the
  $q$-wire for some $q<r$ such that $\ell(\pi)=\ell(\nu t_{qr})$.
  Therefore, $(\mathbf{a}', \bfb')$ encodes a pipe dream $E \in\rp(\nu
  t_{qr}) \subset \Tset(\pi)$.
\end{lem}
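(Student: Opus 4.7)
The plan is to read essentially everything off from Proposition~\ref{t:little}(\ref{item:2}), combined with Lemma~\ref{l:stack.push}. I would apply Proposition~\ref{t:little}(\ref{item:2}) with input word $\bfa=\mathbf{r}_D\in R(\pi)$, push column $t=t_0$, direction $\d=-$, and outcome $\bumped$. By construction of $t_0$ in Algorithm~\ref{algorithm:TransitionMap}, the two wires crossing in column $t_0$ of the right-labeled wiring diagram for $\mathbf{r}_D$ are exactly $r$ and $s$ with $r<s$, so in the notation of that proposition $(k,l)=(r,s)$ and $\Delete_{t_0}\mathbf{r}_D\in R(\pi t_{rs})=R(\nu)$.

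With $\outcome=\bumped$ the proposition further yields $\bfa'\in R(\nu t_{xy})$, where $\{x<y\}$ are the wire labels of the final crossing, situated in column $j$ of the right-labeled wiring diagram of $\bfa'$. Moreover, the clause ``if $\d=-$, then $k=y$'' forces $y=r$, so setting $q:=x$ I obtain $q<r$ and the column-$j$ crossing is precisely between the $r$-wire and the $q$-wire, with $\bfa'\in R(\nu t_{qr})$. Because the $\bumped$ branch of Algorithm~\ref{algorithm:little bump} never deletes a letter, $|\bfa'|=|\mathbf{r}_D|=\ell(\pi)$, and since $\bfa'$ is reduced for $\nu t_{qr}$ this forces $\ell(\nu t_{qr})=\ell(\pi)$, exactly the condition defining the union in~\eqref{eq:Tset}.

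Finally, Lemma~\ref{l:stack.push} ensures that the output bounded pair $(\bfa',\bfb')$ encodes a reduced pipe dream $E$; since the reduced word of $E$ is $\bfa'\in R(\nu t_{qr})$, we conclude $E\in\rp(\nu t_{qr})\subset\Tset(\pi)$, as required. I expect no genuine obstacle here: the only delicate point is faithfully applying the ``$\d=-\Rightarrow k=y$'' clause of Proposition~\ref{t:little}(\ref{item:2}), which immediately identifies $r$ as the larger of the two wires at the final crossing; everything else is bookkeeping absorbed by the earlier lemmas.
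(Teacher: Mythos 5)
Your proposal is correct and matches the paper's own (very terse) proof, which simply cites Proposition~\ref{t:little}(\ref{item:2}) and Lemma~\ref{l:stack.push}; you have filled in exactly the intended details, in particular using the ``$\d=-\Rightarrow k=y$'' clause to identify $r$ as the larger wire at the final crossing and the preservation of word length to get $\ell(\nu t_{qr})=\ell(\pi)$.
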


\begin{proof}
The proof follows from Proposition~\ref{t:little}(\ref{item:2}) and
Lemma~\ref{l:stack.push}.
\end{proof}

Given the notation established in Theorem~\ref{t:transitionA}, we can
state an important consequence of the proof of Little's main theorem
\cite[Theorem 7]{little2003combinatorial}.  We will use this theorem
to prove Lemma~\ref{l:bumped.step.+}.  Recall that Little's bumping algorithm in
the $+1$ direction on
reduced words maps
$\bfa$ maps to $\bfa'$ if and
only if \[ \Bump^{+}_j(\mathbf{a}, \mathbf{b}) =
(\mathbf{a}',\mathbf{b}', i, t, \bumped) \] for any (equivalently every) bounded word
$\bfb$ for $\bfa$.

\begin{thm}\label{thm:little.consequence}\cite{little2003combinatorial}
Assume the notation in Theorem~\ref{t:transitionA}.
Further assume there exists some $q<r$ such that $\ell(\pi)=\ell(\nu t_{qr})$.
Given any bounded pair $(\bfa,\bfb)$ for $\nu t_{qr}$, let $j$ be the
column of the $\{q,r \}$-wire crossing in $\bfa$. If
\[ \Bump^{+}_j(\mathbf{a}, \mathbf{b}) =
(\mathbf{a}',\mathbf{b}', i, t, \bumped), \] then, $\bfa' \in R(\pi)$
and $t$ is the column containing the $\{r,s\}$-crossing in the wiring
diagram of $\bfa'$.
\end{thm}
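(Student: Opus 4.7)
The plan is to reduce the claim to the main theorem of \cite{little2003combinatorial}. First, observe that the bounded word $\bfb$ is inessential here: since $\d=+$, every push in step 2 of Algorithm \ref{algorithm:little bump} strictly increases entries of both $\bfa'$ and $\bfb'$, so all entries of $\bfb'$ stay at least $1$ and the zero-deletion rule in step 3 is never triggered. Hence the run of $\Bump^{+}_{j}$ depends only on $\bfa$ and $j$, and coincides with Little's unbounded map $\theta_{r}$ applied to $\bfa$ at the $\{q,r\}$-crossing, so it suffices to work as in Little's original setting.

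Next, apply Proposition \ref{t:little}(\ref{item:2}) to determine the general shape of the output. Because column $j$ of $\bfa$ contains the $\{q,r\}$-crossing, we have $\Delete_{j}\bfa \in R(\nu t_{qr} \cdot t_{qr}) = R(\nu)$. Combining this with the $\bumped$ outcome and the rule $l=x$ forced by $\d=+$, we obtain
\[
\bfa' \in R(\nu\, t_{r,y}) \quad \text{for some } y > r,
\]
and the column $t$ of the final push contains the $\{r,y\}$-crossing in the right-labeled wiring diagram of $\bfa'$. Thus the entire content of the theorem is reduced to showing that $y=s$.

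The main obstacle is precisely the identification $y=s$. This is where the hypothesis that $(r,s)$ is the lex-largest inversion of $\pi$ enters, together with the length condition $\ell(\pi)=\ell(\nu t_{qr})$. Following Little, one tracks an ``active wire'' through the sequence of defects: each intermediate defect pairs the $r$-wire (the upper wire of the original $\{q,r\}$-crossing) with some other wire, while the increment-pushes progressively shift the $r$-wire downward through the diagram. Because $r$ is the position of the last descent of $\pi$ (so $\pi$ is increasing on $[r+1,\infty)$) and $s$ is the largest index with $\pi(s)<\pi(r)$, the structure of the wiring diagram above and to the right of the $\{q,r\}$-crossing forces every wire encountered during the bump to have label at most $s$. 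The bump can therefore terminate in a reduced configuration only when the $r$-wire creates a brand-new crossing with the $s$-wire; any earlier stop would produce a non-reduced word, and no later stop is consistent with the extremality of $s$. Hence $y=s$, giving $\bfa' \in R(\nu\, t_{r,s}) = R(\pi)$ with $t$ containing the $\{r,s\}$-crossing, as required.
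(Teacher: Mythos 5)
Your setup is right and matches how the paper itself handles this statement (the paper cites Little for the theorem, but proves the generalization, Lemma~\ref{l:bumped.step.+.gen}, by exactly your first two steps): the bounded word is indeed inert when $\d=+$, and Proposition~\ref{t:little}(2) applied to $\bfa\in R(\nu t_{qr})$ with the $\{q,r\}$-crossing in column $j$ correctly yields $\Delete_j\bfa\in R(\nu)$ and, since $\d=+$ forces $l=x=r$, that $\bfa'\in R(\nu t_{r,y})$ for some $y>r$ with the final crossing $\{r,y\}$ in column $t$. The gap is in your argument that $y=s$. The claim that ``every wire encountered during the bump has label at most $s$'' is asserted without justification (and is not the fact you need), and the sentence ``any earlier stop would produce a non-reduced word'' is circular: the algorithm stops precisely when the word becomes reduced, so this observation cannot tell you which wire the terminal crossing pairs with $r$. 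As written, the identification $y=s$ is not proved.

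The missing step has a short, purely permutation-level argument that avoids tracking the bump dynamically. Increment-pushes preserve length, so $\bfa'$ is a reduced word of length $\ell(\nu t_{qr})=\ell(\pi)=\ell(\nu)+1$ for $\nu t_{r,y}$; hence $\ell(\nu t_{r,y})=\ell(\nu)+1$. Now use the structure of $\nu=\pi t_{rs}$ coming from $(r,s)$ being the lex largest inversion of $\pi$: one has $\nu(r)<\nu(s)$, $\nu(m)<\nu(r)$ for all $r<m<s$, and $\nu(s)<\nu(s+1)<\nu(s+2)<\cdots$. Consequently $\nu(y)>\nu(r)$ forces $y\geq s$, and for any $y>s$ the index $m=s$ satisfies $\nu(r)<\nu(s)<\nu(y)$, so $\ell(\nu t_{r,y})>\ell(\nu)+1$. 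The unique $y>r$ with $\ell(\nu t_{r,y})=\ell(\nu)+1$ is therefore $y=s$, giving $\bfa'\in R(\pi)$ and placing the $\{r,s\}$-crossing in column $t$. Replacing your third paragraph with this length-counting argument closes the gap and brings your proof in line with the paper's proof of Lemma~\ref{l:bumped.step.+.gen}.
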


\begin{lem}\label{l:bumped.step.+}
Assume the notation in Theorem~\ref{t:transitionA}.  Let $E \in
\rp(\nu t_{q,r}) \subset \Tset(\pi)$ for some $1 \leq q <r$.  Assume
$E$ is encoded by the bounded pair
$(\mathbf{r}_{E}, \mathbf{j}_{E})$,
 $j$ is the column
of the $\{q,r \}$-crossing in
$\mathbf{r}_{E}$,
and
\[ \Bump^{+}_j(\mathbf{a},
\mathbf{b}) = (\mathbf{a}',\mathbf{b}', i, t, \bumped).  \] Then
$(\mathbf{a}',\mathbf{b}')$ encodes a pipe dream $D \in \rp(\pi)$
and $t$ is the column containing the $\{r,s\}$-crossing in the wiring
diagram of $\bfa'$.
\end{lem}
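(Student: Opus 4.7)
The plan is to assemble this lemma from two results already in hand: Lemma~\ref{l:stack.push}, which governs how the bounded bumping algorithm acts on pipe-dream-encoded bounded pairs, and Theorem~\ref{thm:little.consequence}, which identifies what the Little bump in the $+$ direction does at the level of underlying permutations. Neither step requires new combinatorial analysis; the work is just to check that the hypotheses of each result are met.

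First I would verify that $\mathbf{r}_E$ is nearly reduced at $j$. By assumption $E \in \rp(\nu t_{q,r})$, so $\mathbf{r}_E \in R(\nu t_{q,r})$. The column $j$ carries the $\{q,r\}$-crossing in the right-labeled wiring diagram of $\mathbf{r}_E$, and deleting this crossing produces a reduced word for $(\nu t_{q,r}) t_{q,r} = \nu$. Thus $\Delete_j \mathbf{r}_E$ is reduced, so $\mathbf{r}_E$ is nearly reduced at $j$, which is exactly the hypothesis needed to invoke both Lemma~\ref{l:stack.push} and Theorem~\ref{thm:little.consequence}.

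Next, apply Lemma~\ref{l:stack.push} to the reduced pipe dream $E$ with direction $\d = +$ and starting column $j$. Its conclusion is that the output bounded pair $(\mathbf{a}', \mathbf{b}')$ again encodes some reduced pipe dream; call it $D$. This handles the pipe-dream-encoding claim, with the one remaining question being which permutation $D$ represents.

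Finally, apply Theorem~\ref{thm:little.consequence}, whose hypothesis $\ell(\pi) = \ell(\nu t_{q,r})$ is built into the assumption $E \in \Tset(\pi)$. It asserts that $\mathbf{a}' \in R(\pi)$ and that $t$ is the column of the $\{r,s\}$-crossing in the wiring diagram of $\mathbf{a}'$. Combining this with the previous paragraph shows that $D$, the reduced pipe dream encoded by $(\mathbf{a}', \mathbf{b}')$, lies in $\rp(\pi)$, and that $t$ locates the $\{r,s\}$-crossing as claimed. There is no substantive obstacle here; essentially all the real content has been front-loaded into Lemma~\ref{l:stack.push} and Theorem~\ref{thm:little.consequence}, and this lemma is the short companion to Lemma~\ref{l:bumped.step} needed for the inverse transition map.
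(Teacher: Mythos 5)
Your proposal is correct and follows the paper's own proof exactly: invoke Lemma~\ref{l:stack.push} to see that the output bounded pair encodes a reduced pipe dream, then invoke Theorem~\ref{thm:little.consequence} to identify the underlying permutation as $\pi$ and locate the $\{r,s\}$-crossing. The only addition is your explicit check that $\mathbf{r}_E$ is nearly reduced at $j$, which the paper leaves implicit; this is a harmless (indeed welcome) bit of extra care.
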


\begin{proof}

By Lemma~\ref{l:stack.push}, we know $(\mathbf{a}',\mathbf{b}')$
encodes a reduced pipe dream $D$ for some permutation with the same
length as $\pi$.
%
By Theorem~\ref{thm:little.consequence}, $\bfa' \in R(\pi)$,
so we conclude $D \in \rp(\pi)$.  The column of the $\{r,s
\}$-crossing also follows from the theorem.
\end{proof}


This completes all the lemmas needed for Theorem~\ref{t:transitionA}.
Finally in this section, we introduce the transition chains.  The
transition chains will appear in the examples of the main bijection
for Macdonald's identity.  However, these chains do not appear in the
formal proof of Theorem~\ref{t:macdonald} explicitly.

Recall that to compute a Schubert polynomial $\mathfrak{S}_{\pi }$,
one applies the Transition Equation recursively until the expansion is
given as a positive sum of monomials.  We could also use the
Transition Map $\TransitionMap$ repeatedly to see exactly what happens
to each pipe dream in $\rp(\pi )$ in this process.  The transition
chain records these steps in a way that enables the process to be
reversed.

 \begin{defn}\label{d:young.chain} Given $D \in \rp(\pi)$, define the
associated \textit{transition chain} $Y(D)$ \textit{for} $\pi$
recursively as follows.  If $D$ is the empty pipe dream, then $D \in
\rp(\id)$ and $Y(D):=()$, the empty list.  If $D \in \rp(\pi)$ is not
empty, compute $\TransitionMap(D)=E \in \rp(\pi t_{r,s} t_{q,r})$ and
$Y(E)= ( (q_{k-1},r_{k-1}), \ldots,(i_2,r_2),(i_1,r_1))$.  Prepend
$(q,r)$ onto the front, so that \begin{equation}\label{e:young.chain}
Y(D) : =((q,r),(q_{k-1},r_{k-1}),
\ldots,(q_2,r_2),(q_1,r_1)).
\end{equation}
\end{defn}

For example, starting with the pipe dream $D$ encoded by $((2,3,2),
(1,2,2))$ on the left in Figure~\ref{fig:bijection}, we apply
transition maps along the second row until the we get to the empty pipe
dream.  This sequence of pipe dreams goes along with the following
data.

\[
\begin{array}{|c|c|c|c|c|}
\hline
\pi &	(r,s) &	 (r_{D}, j_{D}) &	(r_{E},j_{E}) &	(q,r)\\
\hline
\left[ 1,4,3,2\right] & (3,4) & (232,122) &	(132,122) &	      (1,3)\\
\left[ 2,4,1,3\right] & (2,4) & (132,122) &	(12,11) & (2,2)\\
\left[ 2,3,1,4\right] & (2,3) & (12,11)  &	(1,1)  &  (2,2)\\
\left[ 2,1,3,4\right] & (1,2) & (1,1) & (,)&	(1,1)  \\
\hline
\end{array}
\]
Thus, $Y(D) = ((1,3),(2,2),(2,2),(1,1))$.

\begin{cor}\label{cor:transition.chain}
 The reduced pipe dreams for $\pi$ and the transition chains
for $\pi$ are in bijection via the map $Y$.
\end{cor}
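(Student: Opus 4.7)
The plan is to prove this by induction on $\pi$ in the inversion order $\lessthan$ of Definition~\ref{defn:lex.largest.inversion.order}, leveraging the bijectivity of $\TransitionMap \colon \rp(\pi) \to \Tset(\pi)$ established in Theorem~\ref{t:transitionA}. The base case $\pi = \id$ is immediate: the only reduced pipe dream is the empty one, and its transition chain is the empty list $()$, so $Y$ is trivially a bijection.

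For the inductive step with $\pi \neq \id$, let $(r,s)$ be the lex largest inversion of $\pi$ and set $\nu = \pi t_{r,s}$. By Remark~\ref{rem:lex.largest.inversion.order}, every permutation $\nu t_{q,r}$ occurring in the decomposition
$$
\Tset(\pi) = \bigcup_{\substack{q \leq r \\ \ell(\nu t_{q,r}) = \ell(\pi)}} \rp(\nu t_{q,r})
$$
satisfies $\nu t_{q,r} \lessthan \pi$, so the inductive hypothesis furnishes a bijection between $\rp(\nu t_{q,r})$ and the set of transition chains for $\nu t_{q,r}$. Given $D \in \rp(\pi)$, the recursive definition writes $Y(D) = ((q,r), Y(\TransitionMap(D)))$, where $(q,r)$ records which $\rp(\nu t_{q,r})$ contains $\TransitionMap(D)$. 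To see $Y$ is injective on $\rp(\pi)$, observe that $\pi$ alone determines $(r,s)$ and hence $\nu$, so the first pair $(q,r)$ of $Y(D)$ pins down the intermediate permutation $\nu t_{q,r}$; the tail $Y(\TransitionMap(D))$ then determines $\TransitionMap(D) \in \rp(\nu t_{q,r})$ by the inductive hypothesis, and finally $D = \TransitionMap^{-1}(\TransitionMap(D))$ is uniquely recovered using the inverse transition map from Theorem~\ref{t:transitionA}. Surjectivity onto the set of transition chains for $\pi$ is built into the definition of this set as the image of $Y$.

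The only real subtlety — and the reason the induction closes — is that the pair $(q,r)$ alone (rather than the triple $(q,r,s)$) is enough to identify the intermediate permutation, since the lex largest inversion $(r,s)$ is determined by $\pi$. This is the point at which the recursive structure of the transition chain matches the recursive structure of the Transition Equation. Termination of both the forward recursion (defining $Y$) and the backward reconstruction (inverting $Y$) is guaranteed by the strict descent $\nu t_{q,r} \lessthan \pi$ together with the finiteness noted in Remark~\ref{rem:lex.largest.inversion.order}.
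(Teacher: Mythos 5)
Your proposal is correct and follows essentially the same route as the paper: induction on the inversion order via Remark~\ref{rem:lex.largest.inversion.order}, with injectivity coming from the fact that the leading pair $(q,r)$ together with $\pi$ identifies the intermediate permutation $\nu t_{q,r}$, the tail of the chain identifies $\TransitionMap(D)$ by induction, and $D$ is then recovered by the inverse transition map of Theorem~\ref{t:transitionA}. The paper's proof is just a compressed version of this same argument.
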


\begin{proof}
This statement clearly holds for $\id$.  By induction on inversion
order
and Remark~\ref{rem:lex.largest.inversion.order},
we can assume $Y$ maps all pipe dreams in $\Tset(\pi)$
bijectively to their transition chains.  The claim now follows since
$\TransitionMap^{-1}$ is a bijection and the observation that the
computation in Algorithm~\ref{algorithm:invTransitionMap} only relies
the input $E \in \Tset(\pi)$ and the pair $(q,r)$.
\end{proof}

\section{Transition Equation for Bounded Pairs}\label{s:trans.bounded.pairs}

In the last section, the Transition Equation for Schubert polynomials
was proved via a bijection on reduced pipe dreams (which can be
interpreted as bounded pairs of a special kind).  Next we give an
analogue of the Transition Equation for the enumeration of all
bounded pairs of a permutation, together with a bijective proof.  Combining the two
bijections will lead to our bijective proof of
Theorem~\ref{t:macdonald}.  Some terms are defined here in more
generality than we need for this proof -- they will be used later in
Section~\ref{s:specialization}.

\begin{defn}\label{d:bounded.pair.poly} Fix $\pi \in S_{n}$ of length
  $p$.  Given $\mathbf{a}\in R(\pi)$ and $\mathbf{b}$ a bounded word
  for $\mathbf{a}$, let $x^{\mathbf{a}}:=x_1^{a_{1}}x_2^{a_{2}}\cdots
  x_p^{a_{p}}$ and similarly for $y^{\mathbf{b}}$.  Let $\q$ be an
  formal variable, and let $\mathbf{x}=(x_1, x_2, \ldots, x_p)$ and
  $\mathbf{y}=(y_1, y_2, \ldots, y_p)$ be two alphabets of formal
  variables.  Define the \textit{bounded pair polynomial} to be
\[
\Bpoly_{\pi}(\mathbf{x},\mathbf{y};\q) := \sum_{(\bfa,\bfb)}
x^{\mathbf{a}}y^{\mathbf{b}}\q^{\comaj(\mathbf{a})}
\]
where the sum is over all bounded pairs
$(\mathbf{a},\mathbf{b})$ for $\pi$.  For $\pi=\id$, set
$\Bpoly_{\pi}(\mathbf{x},\mathbf{y};\q):=1$.  Thus, setting all of the
variables to 1 gives the number of bounded pairs for $\pi$:
\[
\Bpoly_{\pi}(\mathbf{1})= \Bpoly_{\pi}(\mathbf{1},\mathbf{1};1)=  \sum_{(a_1,a_2,\ldots, a_p) \in R(\pi)} a_1\cdot
a_2 \cdots a_p.
\]
\end{defn}

For example, $\Bpoly_{s_{k}}(\mathbf{1}) =k$ for a simple transposition
$s_{k}$.  Also, $\Bpoly_{[3,2,1]}=6$ as mentioned in the introduction.

\begin{thm}[Transition Equation for Bounded Pairs]\label{t:transitionBP}
For all permutations $\pi$ such that $\ell(\pi )=p>0$, the
number of bounded pairs satisfies the following recursive formula:
\begin{equation}\label{e:transB}
\Bpoly_{\pi}(\mathbf{1})= p\ \Bpoly_{\nu}(\mathbf{1}) \ + \sum_{\substack{
q<r:
\\
\ell(\pi)=\ell(\nu t_{qr})
}}
\Bpoly_{\nu t_{qr}} (\mathbf{1}),
\end{equation}
where $(r,s)$ is the lex largest inversion of $\pi$, and
$\nu=\pi t_{rs}$.  The base cases of the recurrence is $\Bpoly_{\id }(\mathbf{1}) =1$.
\end{thm}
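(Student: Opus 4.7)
The plan is to exhibit an explicit bijection
\[
  \BoundedPairs(\pi)\;\longleftrightarrow\;\bigl(\{1,\ldots,p\}\times\BoundedPairs(\nu)\bigr)\;\sqcup\;\bigsqcup_{\substack{q<r\\\ell(\nu t_{qr})=p}}\BoundedPairs(\nu t_{qr})
\]
built from the bounded bumping algorithm, in direct parallel with the pipe-dream Transition Map of Algorithm~\ref{algorithm:TransitionMap}.  The difference is that the input bounded pair need not encode a pipe dream, so the deletion branch produces a column index $c\in\{1,\ldots,p\}$ that can take any value (instead of being forced to $c=p$ as in the pipe-dream case).  Once the bijection is established, the recurrence~\eqref{e:transB} is immediate by counting: the $\bumped$ branch produces the sum over $q<r$, while the $\deleted$ branch gives $p$ preimages for each element of $\BoundedPairs(\nu)$.

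The forward map sends $(\bfa,\bfb)\in\BoundedPairs(\pi)$ as follows.  Locate the unique column $t_0$ of the $\{r,s\}$-crossing in the right-labeled wiring diagram of $\bfa$ and compute $\Bump^-_{t_0}(\bfa,\bfb)=(\bfa',\bfb',i,j,\outcome)$.  By Proposition~\ref{t:little}\,(2) applied with $\d=-$ (so $k=y=r$), the $\bumped$ case places $(\bfa',\bfb')$ in $\BoundedPairs(\nu t_{qr})$ for some unique $q<r$ with $\ell(\nu t_{qr})=p$, and this is the image.  By Proposition~\ref{t:little}\,(3), the $\deleted$ case places $(\bfa',\bfb')$ in $\BoundedPairs(\nu)$, and the image is the pair $(j,(\bfa',\bfb'))\in\{1,\ldots,p\}\times\BoundedPairs(\nu)$.

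The inverse map reverses each branch using the increment direction.  For $(\bfa',\bfb')\in\BoundedPairs(\nu t_{qr})$ with $q<r$, locate the column $j'$ of the $\{q,r\}$-crossing in $\bfa'$ and apply $\Bump^+_{j'}(\bfa',\bfb')$; Theorem~\ref{thm:little.consequence} guarantees that the result is a bounded pair for $\pi$.  For $(c,(\bfa',\bfb'))\in\{1,\ldots,p\}\times\BoundedPairs(\nu)$, insert a new letter at column $c$---the value $0$ into $\bfb'$ and the value $i:=\rho-1$ into $\bfa'$, where $\rho$ is the row occupied by right-wire $r$ at column $c-1$ in the wiring diagram of $\bfa'$---and then apply $\Bump^+_c$ to the resulting pair.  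Proposition~\ref{t:little}\,(1) guarantees that the two constructions are mutual inverses once both are shown to land in the claimed target sets.

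The main obstacle is the $\deleted$ branch of the inverse: one must verify that $\Bump^+_c$ applied to the inserted pair always terminates with $\outcome=\bumped$ and produces a word in $R(\pi)$, rather than in $R(\sigma)$ for some other permutation $\sigma$ of length $p$.  This is a strengthening of Theorem~\ref{thm:little.consequence} that allows the ``inserted crossing'' to involve right-wire $r$ together with an arbitrary neighboring wire in the wiring diagram of $\bfa'$, rather than strictly with wire $q$ for some $q<r$.  I expect this to follow from a wiring-diagram tracking argument analogous to Little's original: as the inserted crossing propagates through the chain of defects in $\Bump^+_c$, one of its two wires remains labeled $r$ throughout, while the other evolves into the wire labeled $s$ at the column where the algorithm terminates.
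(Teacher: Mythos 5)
Your proposal is correct and follows essentially the same route as the paper: the same decrement-bump at the $\{r,s\}$-crossing column for the forward map (with the deleted column recorded as the extra factor of $[1,p]$), the same increment-bump inverse with an insertion whose foot lies on the $r$-wire, and the same appeal to Proposition~\ref{t:little} for reversibility. The ``strengthening of Theorem~\ref{thm:little.consequence}'' that you flag as the main obstacle is exactly the paper's Lemma~\ref{l:bumped.step.+.gen}, and your sketched mechanism (the $r$-wire persists through the chain of pushes while the structure of $\nu$ forces the terminal partner wire to be $s$) is precisely how that lemma is proved.
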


Observe the same permutations appear in the right hand side of
\eqref{e:transB} as in \eqref{e:transA} so
Remark~\ref{rem:lex.largest.inversion.order} applies here as well.

\begin{proof} Similarly to the proof of Theorem~\ref{t:transitionA},
we give a bijective proof of this recurrence by defining a map
$\BoundedPairTransMap$ that maps $\BoundedPairs(\pi)$ to
\[
\Xset(\pi):=
\Bigl( \BoundedPairs(\nu) \times [1,p] \Bigr) \ \cup   \bigcup_{\substack{
q<r:
\\
l(\pi)=l(\nu t_{qr})
}} \BoundedPairs(\nu t_{qr}) \times \{0\}
\]
whenever $p=\ell(\pi)>0$.

\begin{algorithm}[Bounded Transition]
  \label{algorithm:bt}\
Suppose $\pi \neq \id$ is given, and let $(r,s)$ and $\nu$
be defined as in Theorem~\ref{t:transitionBP}.

  \noindent \textbf{Input}: A bounded pair $(\mathbf{a}, \mathbf{b})$
  for $\pi$.

\noindent \textbf{Output}:
$\BoundedPairTransMap(\mathbf{a},\mathbf{b})=((\mathbf{a}',
\bfb'), \dvalue)\in \Xset(\pi)$.

\begin{enumerate}
\item Let $t_{0}$ be the unique column containing the $\{r,s\}$-wire crossing
  in the right-labeled wiring diagram for $\bfa$.

\item Compute $\Bump^{-}_{t_0}(\mathbf{a},\mathbf{b}) =
  (\bfa',\bfb',i, j, \outcome)$.

\item If $\outcome=\deleted$, then $j$ is the last crossing pushed in
the bounded bumping algorithm before being deleted so $j \in [1,p]$.
Return $((\mathbf{a}', \bfb'), j)$ and \textbf{stop}.  By
Proposition~\ref{t:little}(\ref{item:3}), we know $(\mathbf{a}',
\bfb')$ is a bounded pair for $\nu$.

\item If $\outcome=\bumped$, return $((\mathbf{a}', \bfb'), 0)$ and
\textbf{stop}.   Proposition~\ref{t:little}(\ref{item:2}) shows that
one of the wires crossing in column $j$ of the right-labeled wiring
diagram for $\mathbf{a}'$ is the $r$-wire, the other is labeled by
some $q<r$ with $\ell(\pi)=\ell(\nu t_{qr})$.  Therefore,
$(\mathbf{a}', \bfb')$ is a bounded pair for $\nu t_{qr}$.
\end{enumerate}
\end{algorithm}

\begin{algorithm}[Inverse Bounded Transition]
  \label{algorithm:bti}\
Suppose $\pi \neq \id$ is given, and let $(r,s)$ and $\nu$
be defined as in Theorem~\ref{t:transitionBP}.

  \noindent \textbf{Input}: $((\mathbf{e}, \mathbf{f}), \dvalue) \in
  \Xset(\pi)$, in particular $(\mathbf{e}, \mathbf{f})$ is a bounded
  pair for $\nu t_{q,r}$ for some $1 \leq q \leq r$.

\noindent \textbf{Output}: $\BoundedPairTransMap^{-1} ((\mathbf{e},
\mathbf{f}), \dvalue)= (\mathbf{a}, \mathbf{b})$, a bounded pair for $\pi$.

\begin{enumerate}
\item If $q=r$, then $\dvalue\in [1,p]$ and $\mathbf{e}=(e_1,\ldots,
  e_{p-1}) \in R(\nu )$.  If $\dvalue<p$, set $\omega \leftarrow
  s_{e_{p-1}} s_{e_{p-2}} \cdots s_{e_{\dvalue}}$, and if $\dvalue=p$ set
  $\omega \leftarrow \id$.  Set
  \begin{align*}
    i &\leftarrow\omega^{-1}(r), \\
    \mathbf{g} &\leftarrow  \Insert_\dvalue^i(\mathbf{e}),\\
    \mathbf{h} & \leftarrow  \Insert_\dvalue^0(\mathbf{f}),\\
    j & \leftarrow \dvalue.
  \end{align*}

\item If  $q<r$, then $\dvalue=0$.  Set $\mathbf{g} \leftarrow \mathbf{e}$
and $\mathbf{h}\leftarrow \mathbf{f}$.  Let $j$ be the column of the
$\{q,r\}$-wiring crossing in the right-labeled wiring diagram of
$\mathbf{e}$ which must exist since $(\mathbf{e}, \mathbf{f}) \in \BoundedPairs(\nu t_{q,r})
\subset \Xset(\pi)$.

\item Compute $\Bump^{+}_j(\mathbf{g}, \mathbf{h}) =
(\mathbf{g}',\mathbf{h}', i', t, \bumped).$ Return $(\mathbf{g}',
\mathbf{h}')$ and \textbf{stop}.  Note the outcome will always be
$\bumped$ since we are applying
increment-pushes. Lemma~\ref{l:bumped.step.+.gen} below shows that in
all cases, $(\mathbf{g}',\mathbf{h}')$ is a bounded pair for $\pi$.
\end{enumerate}
\end{algorithm}

Observe that $\BoundedPairTransMap$ is an injection since the bounded
bumping algorithm is reversible given the column $j$ of the final push
and the value $r$ in the case $\outcome = \deleted$.  Thus,
$\BoundedPairTransMap^{-1}\BoundedPairTransMap(\bfa,\bfb) =
(\bfa,\bfb)$ so
$\BoundedPairTransMap(\bfa,\bfb)=((\mathbf{e},\mathbf{f}),j)=\BoundedPairTransMap(\bfa',\bfb')$
implies $ (\bfa,\bfb)= (\bfa',\bfb')$.  Also, $\BoundedPairTransMap$
is surjective since $\BoundedPairTransMap^{-1} $ is well defined on
all of $\Xset(\pi)$ and one can observe that $\BoundedPairTransMap
\BoundedPairTransMap^{-1}$ is again the identity map.  Therefore,
$\BoundedPairTransMap: \BoundedPairs(\pi) \longrightarrow \Xset(\pi)$
is a bijection proving the Transition Equation for Bounded Pairs.
\end{proof}

The following lemma is a generalization of Lemma~\ref{l:bumped.step.+}.

\begin{lem}\label{l:bumped.step.+.gen}
  Assume the notation in Theorem~\ref{t:transitionBP}.  Let
  $\bfa=(a_1,\ldots,a_p)$ be any nearly reduced word at $1\leq j\leq
  p$ such that $\Delete_j (\mathbf{a}) \in R(\nu)$ and the wires
 such that $\Delete_j (\mathbf{a}) \in R(\nu)$ and the wires
  crossing in column $j$ of the right-labeled wiring diagram for
  $\bfa$ are labeled by $q<r$ where $r$ is the last descent of $\pi$.
  Let $\bfb$ be a bounded word for $\bfa$.  If
    \[
    \Bump^{+}_j(\mathbf{a}, \mathbf{b}) = (\mathbf{a}',\mathbf{b}', i', t, \bumped),
    \]
    then $(\mathbf{a}',\mathbf{b}')$ is a bounded pair for $\pi$ and
    $t$ is the column containing the $\{r,s\}$-wire crossing in the
    right-labeled wiring diagram of $\bfa'$.
\end{lem}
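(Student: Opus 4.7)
The plan is to adapt the proof of Lemma~\ref{l:bumped.step.+} to the more general bounded-pair setting.  First I would observe that since $\outcome = \bumped$, the bounded bumping algorithm terminated at Step~4, so $\bfa'$ is reduced; and since every increment-push preserves the entrywise inequality $b_i \leq a_i$ without triggering any deletion, $\bfb'$ is a bounded word for $\bfa'$.  Next, the hypothesis $\Delete_j(\bfa) \in R(\nu)$ is preserved by Proposition~\ref{t:little}(\ref{item:3}), giving the invariant $\Delete_t(\bfa') \in R(\nu)$ at termination.  Combined with $\bfa'$ reduced, Proposition~\ref{t:little}(\ref{item:2}) yields $\bfa' \in R(\nu t_{r', s'})$, where $r' < s'$ are the right-labels of the two wires crossing in column $t$ of $\bfa'$'s right-labeled wiring diagram.

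The remaining and central task is to show that $(r', s') = (r, s)$, which will give $\bfa' \in R(\pi)$ and identify $t$ as the column of the $\{r, s\}$-crossing in $\bfa'$.  In the case where $\bfa$ is itself reduced, the relation $\bfa \in R(\nu t_{q, r})$ together with $\ell(\pi) = \ell(\nu t_{q, r}) = p$ satisfies the hypotheses of Theorem~\ref{thm:little.consequence}, which immediately delivers the conclusion.  This case mirrors the proof of Lemma~\ref{l:bumped.step.+}, and the argument carries over with no use of pipe-dream structure, since the bounded bumping algorithm depends only on the word and its bounded companion, not on any pipe-dream encoding.

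The hard part will be handling the case where $\bfa$ is nearly reduced at $j$ but not itself reduced, as can happen when Algorithm~\ref{algorithm:bti} inserts a new crossing whose wire pair duplicates the pattern of an existing crossing of $\mathbf{e}$.  By Lemma~\ref{lem:nearly_reduced}, in this situation $\bfa$ is also nearly reduced at exactly one other column $j'$, and the same pair of wires with right-labels $q$ and $r$ crosses at both $j$ and $j'$.  My approach will be to show that the initial iteration(s) of the bounded bumping algorithm resolve the duplicate crossing through a short defect chain, after which the subsequent trajectory of pushes is identical to that obtained by starting from a suitable reduced word in $R(\nu t_{q, r})$.  From that reduced intermediate state, Theorem~\ref{thm:little.consequence} applies to pin the final crossing as the $\{r, s\}$-crossing.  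Carefully establishing this equivalence between the two trajectories, by tracking the wire configuration through each $\defect$ step and invoking the symmetry $\defect_j(\bfa) = j'$, $\defect_{j'}(\bfa) = j$, is the main technical obstacle.
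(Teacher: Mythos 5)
Your proposal diverges from the paper's proof at the crucial step of pinning down the final crossing. You and the paper agree up to the point where $\Delete_t(\bfa')\in R(\nu)$ (via Proposition~\ref{t:little}(\ref{item:3})) and hence $\bfa'\in R(\nu t_{r',s'})$ with $\{r',s'\}$ the wires crossing at column $t$. From there the paper does \emph{not} appeal to Theorem~\ref{thm:little.consequence}. It first uses Proposition~\ref{t:little}(\ref{item:2}) (for $\d=+$) only to conclude that the $r$-wire remains one of the two wires involved at every push, so $r'=r$. Then it finishes with a purely order-theoretic argument driven by the fact that $(r,s)$ is the lex largest inversion of $\pi$: since $\nu=\pi t_{r,s}$ has $\nu(m)<\nu(r)$ for all $r<m<s$ and $\nu(s)<\nu(s+1)<\cdots$, the only $l>r$ with $\ell(\nu t_{r,l})=\ell(\nu)+1$ is $l=s$. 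That uniqueness immediately yields $s'=s$, hence $\bfa'\in R(\pi)$, with no case distinction on whether the input $\bfa$ is reduced.

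This matters because your route has a genuine gap that the paper's route avoids. Theorem~\ref{thm:little.consequence} is stated for a bounded pair for $\nu t_{qr}$, which by definition requires $\bfa\in R(\nu t_{qr})$, i.e.\ $\bfa$ reduced. In Algorithm~\ref{algorithm:bti} the case $q=r$ produces an inserted word $\bfg=\Insert_\dvalue^i(\mathbf{e})$ that is nearly reduced at $\dvalue$ but typically \emph{not} reduced, so the theorem simply does not apply there. You flag this correctly, but the proposed repair — arguing that the initial defect chain ``resolves the duplicate crossing'' after which the trajectory coincides with one started from a reduced word — is only a sketch, and it is not obvious it can be made to work: after the first increment-push the word may still not be reduced, and there is no canonical reduced word in $R(\nu t_{q,r})$ whose bump trajectory you can claim agrees with yours without essentially re-proving the lemma. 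The paper's combination of ``$r$ stays involved'' plus the lex-maximality of $(r,s)$ gives $l=s$ directly for any nearly reduced input, which is precisely the generality the lemma needs over Lemma~\ref{l:bumped.step.+}.
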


\begin{proof}
  By assumption, $\Delete_j \mathbf{a} \in
R(\nu)$ so $\Delete_t \mathbf{a}' \in R(\nu)$ by
Proposition~\ref{t:little}(3).  Say wires $\{k,l\}$ with $k<l$ cross
in column $t$ of $\mathbf{a}'$.  Then  $\bfa' \in R(\nu t_{k,l})$.

 By design, the $r$-wire is the larger labeled wire involved in the
crossing in column $j$ of the right-labeled wiring diagram
for $\bfa$, hence the $r$ wire will continue to be one of
the two wires crossed for every increment-push in the
bounded bumping algorithm so $k=r<l$ by
Proposition~\ref{t:little}(2).  Recall,
$\nu(r) < \nu(s)$, \
$\ell(\nu t_{r,s})=\ell(\nu)+1$, and $(r,s)$ is the lex
largest inversion of $\pi=\nu t_{r,s}$, so we
know $\nu(m)<\nu(r)$ for
every $m$ such that $r<m<s$. Thus, $l\geq s$.  Furthermore,
$\nu(s)<\nu(s+1)<\nu(s+2) < \ldots$, so the only possible
value of $l$ such that $\ell(\nu t_{r,l})=\ell(\nu)+1$ is
$l=s$.  Thus, we can conclude $\bfa \in R(\pi)$.
\end{proof}

\begin{defn}\label{d:young.chain.bounded.pair} Given
$(\mathbf{a},\mathbf{b}) \in \BoundedPairs(\pi)$, define the associated
\textit{transition chain} $Y'(\mathbf{a},\mathbf{b})$ recursively as
follows.  If $\pi = \id$, then $Y'(\textbf{a},\textbf{b}):=()$, the
empty list.  If $\pi \neq \id $, compute
$\BoundedPairTransMap(\mathbf{a},\mathbf{b})=
(\mathbf{a}',\mathbf{b}') \in \BoundedPairs (\pi t_{r,s}t_{q,r})
\subset \Xset(\pi)$ and $Y'(\mathbf{a}',\mathbf{b}')= ( (q_{k-1},r_{k-1}),
\ldots,(q_2,r_2),(q_1,r_1))$.  Prepend $(q,r)$ to get
\begin{equation}\label{e:young.chain.bounded.pair} Y'(\mathbf{a},\mathbf{b}) :
=((q,r),(q_{k-1},r_{k-1}), \ldots,(q_2,r_2),(q_1,r_1)).
\end{equation}
\end{defn}

Many bounded pairs for $\pi$ map to the same transition chain via
$Y'$.  For example, when $\pi =[1,4,3,2]$ all 6 of the following
bounded pairs map to $((1, 3), (2, 2), (2,2), (1,1))$ via $Y'$.

\[
\begin{array}{|c|c|}
\hline
\mathbf{a} & \mathbf{b}\\
\hline
3 2 3 &	 1 2 2\\
3 2 3 &	  1 2 3 \\
2 3 2 &	  2 1 1 \\
2 3 2 &	  2 1 2 \\
2 3 2 &	  2 2 1 \\
2 3 2 &	  2 3 1\\
\hline
\end{array}
\]



\section{Bijective proof of Macdonald's formula}\label{s:bij.proof}

In this section, we spell out the promised bijection proving
Macdonald's formula in Theorem~\ref{t:macdonald}.
%
We introduce some notation first and then define the
Macdonald map $M$ on all bounded pairs.

Recall from the
introduction that both sides of the formula can be interpreted
combinatorially. The sum on the left side of \eqref{e:macdonald.formula}
clearly equals $|\BoundedPairs(\pi)|$.  Let
$$\codes(\pi)= [1,1] \times [1,2] \times \cdots \times [1,p]$$ where
$p= \ell(\pi)$ and $[i,j] = \{i,i+1,\ldots,j \}$.  Recall $\mathbf{c}
= (c_1,c_2,\ldots, c_p) \in \codes(\pi)$ is a sub-staircase word of
length $p$.  By Definition~\ref{d:schubs}, one observes that the right
side of \eqref{e:macdonald.formula} equals $|\codes(\pi) \times
 \rp(\pi) |$.  We will refer to the elements $( \mathbf{c}, D) \in
\cup_{\pi \in S_{\infty}} \codes(\pi) \times \rp(\pi)$ as $\cd$-pairs
for $\pi$.

We can now define a map $M$ from all bounded pairs to all $\cd$-pairs
which preserves the underlying permutation.

\begin{algorithm}[Macdonald Map]
  \label{algorithm:mac}\

  \noindent \textbf{Input}:
A bounded pair $(\mathbf{a}, \mathbf{b}) =
((a_1,\ldots, a_p),(b_1,\ldots, b_p))$.  Let $\pi = s_{a_1}s_{a_2}
\cdots s_{a_p}$.  By definition of a bounded pair, $\mathbf{a}$ is
reduced, so $p =\ell(\pi)$.

\noindent \textbf{Output}: $\MacdonaldMap (\mathbf{a},\mathbf{b}) =
( \mathbf{c}, D) \in \codes(\pi) \times \rp(\pi)$.

\begin{enumerate}
\item If $\pi$ is the identity, then we must have $(\mathbf{a}, \mathbf{b}) =
  ((),())$. Set $\mathbf{c}=()$ and $D =\{\}$.   Return $( \mathbf{c}, D)$ and \textbf{stop}.

\item Compute
  $\BoundedPairTransMap(\mathbf{a},\mathbf{b})=((\mathbf{a}', \bfb'),
  \dvalue) \in \Xset(\pi)$. Say $(\mathbf{a}', \bfb')$ is a bounded pair for
  $ \nu t_{q,r}$ where $(r,s)$ is the lex largest inversion
  for $\pi$, $\nu = \pi t_{r,s}$ and $1 \leq q \leq r$.

\item Recursively compute $\MacdonaldMap(\mathbf{a}', \mathbf{b}') =
(\mathbf{c}', D')$.  By induction on inversion order and
Remark~\ref{rem:lex.largest.inversion.order}, we can assume that
$(\mathbf{c}', D') \in \codes(\nu t_{q,r}) \times \rp(\nu t_{q,r})$.

\item Set $D = \TransitionMap^{-1}(D')$.  If $q<r$, set $\mathbf{c} =
\mathbf{c}'$.  Otherwise, if $q=r$, set $\mathbf{c} = \Insert_p^
%
\dvalue
(
\mathbf{c}')$.  Return $( \mathbf{c}, D)$ and \textbf{stop}.  Observe
that in either case, $\mathbf{c} \in \codes(\pi)$. By
Algorithm~\ref{algorithm:invTransitionMap}, $D \in \rp(\pi)$.
\end{enumerate}
\end{algorithm}

\begin{example} Consider the bounded pair
$(\mathbf{a},\mathbf{b}) = ((2, 3, 2), (2, 1, 2 ))$ for the
permutation $\pi =[1,4,3,2]$.  The steps from
Algorithm~\ref{algorithm:mac} in this case are summarized in
Table~\ref{table:232.212}.  The result is
$M((2, 3, 2), (2, 1, 2 ))= (\mathbf{c},D)$ where $\mathbf{c}=(1,1,2)$
and $D$ is the pipe dream encoded by the biword
$(\mathbf{r}_D,\mathbf{j})=((2,3,2), (2,2,1))$.  The transition chain
is $Y(D)=((1,3),(2,2),(2,2),(1,1))$.  Figure~\ref{fig:bijection} from
Section~\ref{s:intro} illustrates the computations in this table using
drawings of pipe dreams and wiring diagrams.
\end{example}

\begin{figure}
\[
\begin{array}{|c|c|c|c|c|}
\hline
\pi & (\mathbf{a},\mathbf{b}) & (q,r) & \dvalue & ( \mathbf{c}, \mathbf{r}_D, \mathbf{j}_D)   \\
\hline
\left[1 4 3 2\right] & (2 3 2, 2 1 2)&  (1, 3) & 0 &  ( 1 1 2, 2 3 2, 2 2 1) \\

\left[2 4 1 3\right] & (1 3 2, 1 1 2)&  (2,2) & 2 &  ( 1 1 2, 1 3 2, 1 2 1) \\

\left[2 3 1 4\right] & (1 2, 1 2)&  (2,2) & 1 &  ( 1 1, 1 2, 1 1) \\

\left[2 1 3 4\right] & (1, 1)&  (1,1) & 1 &  (1, 1, 1) \\

\left[1 2 3 4\right] & (\emptyset, \emptyset) & & & (\emptyset, \emptyset, \emptyset) \\
\hline
\end{array}
\]
\caption{Data for the computation $M((2, 3, 2), (2, 1, 2
  ))$.}\label{table:232.212}
\end{figure}

\begin{example}
  Consider the bounded pair
  $((5, 4, 3, 5, 6, 4, 5), (1, 4, 2, 3, 5, 3, 5))$ for
  $\pi =[1, 2, 6, 5, 7, 3, 4]$.
The steps from
Algorithm~\ref{algorithm:mac} in this case are summarized in
  Table~\ref{table:5435645}.  The result is the pair
  $( \mathbf{c}, D)=(\mathbf{c}, \mathbf{r}_D, \mathbf{j}_D)$ where
  $D$ is the pipe dream $D$ on the far left in
  Figure~\ref{fig:pipe dream bump} and $\mathbf{c}=(1,1,1,3,2,1,3)$.
  Again, in the table, we use the biword encoding of pipe dreams,
  $(\mathbf{r}_D, \mathbf{j}_D)$.
\end{example}

\begin{figure}
\[
\begin{array}{|c|c|c|c|c|}
\hline
\pi & (\mathbf{a},\mathbf{b}) & (q,r) & \dvalue & ( \mathbf{c}, \mathbf{r}_D, \mathbf{j}_D)   \\
\hline
\left[1 2 6 5 7 3 4\right] & (5 4 3 5 6 4 5, 1 4 2 3 5 3 5)&  (2, 5) & 0 &
( 1 1 1 3 2 1 3 , 4 3 5 6 4 3 5, 4 3 4 4 2 1 2) \\

\left[1 4 6 5 2 3\right] & (5 3 2 4 5 3 4, 1 3 1 2 4 2 4)&  (4,4) & 3 &  (  1 1 1 3 2 1 3, 3 2 4 5 3 4,  2 3 3 2 1 1) \\

\left[1 4 6 3 2 5\right] & (5 2 3 4 2 3, 1 2 1 3 1 3)&  (1, 4) & 0 & ( 1 1 1 3 2 1, 3 2 3 5 4 3, 3 2 2 3 2 1) \\

\left[ 4 6 1 3 5\right] & (5 1 3 4 2 3, 1 1 1 3 1 3)&  (3,3) & 1 & ( 1 1 1 3 2 1, 3 1 2 5 4 3, 3 1 1 3 2 1) \\

\left[2 4 5 1 3\right] & (1 3 4 2 3, 1 1 3 1 3)&  (3,3) & 2 &  (1 1 1 3 2, 3 1 2 4 3, 3 1 1 2 1) \\

\left[2 4 3 1\right] & (1 3 2 3, 1 2 1 3)&  (3,3) & 3 &  ( 1 1 1 3, 3 1 2 3, 3 1 1 1) \\

\left[2 4 1 3\right] & (1 3 2, 1 2 2)&  (1, 2) & 0 &  ( 1 1 1, 3 1 2, 3 1 1) \\

\left[3 2 1 4\right] & (1 2 1, 1 1 1)&  (2,2) & 1 &  ( 1 1 1, 2 1 2, 2 1 1) \\

\left[3 1 2 4\right] & (2 1, 1 1)&  (1,1) & 1 &  ( 1 1, 2 1, 2 1) \\

\left[2 1 3 4\right] & (1, 1)&  (1,1) & 1 &  (1, 1, 1) \\

\left[1 2 3 4\right] & (\emptyset, \emptyset) & & & (\emptyset, \emptyset,\emptyset) \\
\hline
\end{array}
\]
\caption{Data for the computation $M((5, 4, 3, 5, 6, 4, 5), (1, 4, 2, 3, 5, 3, 5))$. }\label{table:5435645}
\end{figure}

\begin{proof}[Proof of Theorem~\ref{t:macdonald}]
  Define $\MacdonaldMap_{\pi}$ to be the restriction of
  $\MacdonaldMap$ to $\BoundedPairs(\pi)$.  We will show by induction
  that $\MacdonaldMap_{\pi}$ is a bijection from $\BoundedPairs(\pi)$
  to $\codes(\pi) \times \rp(\pi)$, as required to prove the theorem.

  For the base case, if $\pi = \id$, then $\MacdonaldMap_{\id}$ is the
  bijection mapping $((),()) \mapsto ((),\{\})$. Now assume that
  $\MacdonaldMap_{\omega}: \BoundedPairs(\omega) \longrightarrow
  \codes(\omega) \times \rp(\omega)$ is a bijection for all
  permutations $\omega$ such that $\omega \lessthan \pi$, as in
  Remark~\ref{rem:lex.largest.inversion.order}.

By Theorem~\ref{t:transitionBP}, we know that $\BoundedPairTransMap: \BoundedPairs(\pi)
\longrightarrow \Xset(\pi)$ is a bijection where
  \[
  \Xset(\pi)= \Bigl( \BoundedPairs(\nu) \times [1,p] \Bigr) \ \cup
  \bigcup_{\substack{ q<r: \\ l(\pi)=l(\nu t_{qr}) }}
  \BoundedPairs(\nu t_{qr}) \times \{0\}.
\]
Let
  \[
\Yset(\pi)=
\Bigl(  \codes(\nu)  \times \rp(\nu) \times [1,p] \Bigr) \
\cup
\bigcup_{\substack{
q<r:
\\
l(\pi)=l(\nu t_{qr})
}}
 \codes(\nu t_{qr})   \times  \rp(\nu t_{qr}) \times \{0\}.
\]
The induction hypothesis implies that the restricted map
$\MacdonaldMap \times \id: \Xset(\pi) \longrightarrow \Yset(\pi)$ is a
bijection preserving the underlying permutation.
That is, if $(\bfa,\bfb,\dvalue) \in \Xset(\pi) $, then $\bfa \in R(\nu t_{q,r})$ for some $q \leq r$, and if $\MacdonaldMap \times
\id(\bfa,\bfb,\dvalue)  = (\mathbf{c},D,\dvalue)$ then $D \in \rp(\nu t_{q,r})$
as well.

Define $\RT: \codes(\pi) \times \rp(\pi) \longrightarrow \Yset(\pi)$
by mapping
$$\RT (\mathbf{c}, D)=
\begin{cases}
(\widehat{\mathbf{c}}, \TransitionMap(D),c_p) & |D|>|\TransitionMap(D)|
  \\ \\ ( \mathbf{c}, \TransitionMap(D),0) & |D|=|\TransitionMap(D)|,
  \end{cases}
$$
where $\widehat{\mathbf{c}}:=(c_1,\ldots,c_{p-1})$.
\noindent Since $\TransitionMap$ is a bijection,
$\RT$ is a bijection with well defined inverse $\RT^{-1}:
\Yset(\pi) \longrightarrow \codes(\pi) \times \rp(\pi)$.

The map $\MacdonaldMap_\pi: \BoundedPairs(\pi) \longrightarrow
\codes(\pi) \times \rp(\pi)$ can be written as the composition of
three bijections, $\MacdonaldMap_\pi = \RT^{-1} \circ (\MacdonaldMap
\times \id) \circ \BoundedPairTransMap$, hence is itself a bijection.
This concludes the induction.
\end{proof}

Observe from the proof above, we can show by induction that
$M_{\pi}^{-1} = \BoundedPairTransMap
^{-1}
 \circ (\MacdonaldMap^{-1} \times
\id) \circ \RT$.  Thus, we can write out the algorithm for $M^{-1}$
analogously with Algorithm~\ref{algorithm:mac}.

\begin{cor}
The inverse of $M$ is given by
Algorithm~\ref{algorithm:inverse.mac}.
\end{cor}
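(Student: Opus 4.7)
The plan is to verify the corollary by unpacking the composition $M_\pi^{-1} = BT_\pi^{-1} \circ (M^{-1} \times \id) \circ RT$ established at the end of the proof of Theorem~\ref{t:macdonald}, and showing that Algorithm~\ref{algorithm:inverse.mac} implements this composition step by step. The proof is essentially a matter of tracing through the three constituent bijections in the correct order, so nothing genuinely new has to be proved; rather the obligation is to confirm that the natural inverse procedure lines up with the stated algorithm.

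First I would set up induction on inversion order, using Remark~\ref{rem:lex.largest.inversion.order} to justify the well-foundedness of the recursion. The base case $\pi=\id$ is immediate: the only $cD$-pair for $\id$ is $((),\{\})$, which must be sent to the bounded pair $((),())$, matching the algorithm's initialization. For the inductive step, given a $cD$-pair $(\mathbf{c},D) \in \codes(\pi) \times \rp(\pi)$ with $\pi \neq \id$, one first applies $RT$ to obtain an element of $\Yset(\pi)$. By definition of $RT$, this requires computing $\TransitionMap(D)$ via Algorithm~\ref{algorithm:TransitionMap}, and then deciding whether to strip off the last letter $c_p$ of $\mathbf{c}$ (in the $\deleted$ case, where $|D|>|\TransitionMap(D)|$ and the resulting permutation is $\nu$) or to keep $\mathbf{c}$ intact and record the marker $0$ (in the $\bumped$ case, where the resulting permutation is $\nu t_{q,r}$ for some $q<r$).

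Next I would apply the induction hypothesis to the image pair under $\MacdonaldMap \times \id$, recursively obtaining a bounded pair $(\mathbf{a}',\mathbf{b}')$ for $\nu t_{q,r}$. Finally, the algorithm invokes $\BoundedPairTransMap^{-1}$ via Algorithm~\ref{algorithm:bti}, feeding in $((\mathbf{a}',\mathbf{b}'),\dvalue)$ where $\dvalue$ is $c_p$ or $0$ according to which branch was taken in the $RT$ step. This produces a bounded pair for $\pi$, as required. The agreement with Algorithm~\ref{algorithm:inverse.mac} is then a direct read-off: each of its numbered steps corresponds to one factor of the composition.

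The only mildly subtle point, and the main thing to check carefully, is the bookkeeping of the sub-staircase word $\mathbf{c}$. In the forward direction, Algorithm~\ref{algorithm:mac} either leaves $\mathbf{c}'$ alone (when $q<r$) or prepends the letter $\dvalue$ at position $p$ via $\Insert_p^{\dvalue}$ (when $q=r$), with $1 \leq \dvalue \leq p$ automatically ensuring the sub-staircase condition. In the inverse, one must verify that reading off $c_p$ (and truncating to $\widehat{\mathbf{c}}$) in the $\deleted$ branch of $RT$ correctly reproduces the $\dvalue$ value originally recorded. This follows because $RT$ was constructed precisely as the inverse of the $\mathbf{c}$-update step inside $\MacdonaldMap$, and because the bijectivity of $\TransitionMap$ and $\BoundedPairTransMap$ (Theorems~\ref{t:transitionA} and \ref{t:transitionBP}) guarantees that the $\deleted$ versus $\bumped$ branch can be detected from $(\mathbf{c},D)$ alone. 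With this checked, the identity $M_\pi \circ M_\pi^{-1} = \id$ and $M_\pi^{-1} \circ M_\pi = \id$ follow from the corresponding identities for each factor, completing the induction.
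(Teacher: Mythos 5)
Your proposal is correct and follows essentially the same route as the paper: the paper's own justification is precisely the observation that $M_{\pi}^{-1} = \BoundedPairTransMap^{-1} \circ (\MacdonaldMap^{-1} \times \id) \circ \RT$ follows by induction from the factorization of $M_\pi$ in the proof of Theorem~\ref{t:macdonald}, with Algorithm~\ref{algorithm:inverse.mac} read off as the step-by-step implementation of that composition. Your additional care about the bookkeeping of $c_p$ versus the marker $\dvalue$ matches the paper's construction of $\RT$ and its inverse.
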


\begin{algorithm}[Inverse Macdonald Map]
   \label{algorithm:inverse.mac}\

   \noindent \textbf{Input}: $(\mathbf{c}, D)$ where $D$ is a reduced
   pipe dream for some permutation $\pi$ and
   $\mathbf{c}=(c_1,c_2,\ldots, c_p)$ is a sub-staircase word  of length
   $p =\ell(\pi)$.

 \noindent \textbf{Output}: $\MacdonaldMap^{-1}(\mathbf{c},D)=
 (\mathbf{a},\mathbf{b})$, a bounded pair for $\pi$.

 \begin{enumerate}
 \item If $\pi$ is the identity, then we must have $\mathbf{c}=()$ and
   $D =\{\}$.  Return $(\mathbf{a}, \mathbf{b}) =((),())$ and
   \textbf{stop}.

\item Compute $\TransitionMap(D)=D' \in \Tset(\pi)$.  Say $D'$ is a
  reduced pipe dream for $\nu t_{q,r}$ where $(r,s)$ is the
  lex largest inversion for $\pi$, $\nu = \pi t_{r,s}$ and $1 \leq q
  \leq r$.  If $q<r$, set $\dvalue =0$ and
  $\mathbf{c}'=\mathbf{c}$. Otherwise, if $q=r$, set
$\dvalue = c_p$ and
  $\mathbf{c}'=(c_1,\ldots, c_{p-1})$.

 \item Recursively compute $\MacdonaldMap^{-1} (\mathbf{c}', D') =
(\mathbf{a}', \mathbf{b}')$.  By induction on inversion order and
Remark~\ref{rem:lex.largest.inversion.order}, we can
assume that $(\mathbf{a}', \mathbf{b}') \in \BoundedPairs(\nu t_{q,r})$.

 \item Compute $\BoundedPairTransMap^{-1}((\mathbf{a}', \mathbf{b}'),
\dvalue) =(\mathbf{a}, \mathbf{b})$.  Return $(\mathbf{a}, \mathbf{b})$ and
\textbf{stop}.  By the proof of Theorem~\ref{t:transitionBP}, we know
that $(\mathbf{a}, \mathbf{b})$ is a bounded pair for $\pi$.

 \end{enumerate}
 \end{algorithm}

\begin{remark}
  Observe that in step 2 of both Algorithm~\ref{algorithm:mac} and
  Algorithm~\ref{algorithm:inverse.mac}, the data $(q,r)$ and $\dvalue$ are
  determined from the input.  So if $\MacdonaldMap(\mathbf{a},
  \mathbf{b}) = ( \mathbf{c}, D)$, then by step 4 of both algorithms,
  these 3 quantities, $\dvalue,q,r$ are the same.  In particular, $Y(D)=Y'(\mathbf{a},
  \mathbf{b})$.
\end{remark}

\begin{remark}
In general, it is not easy to ``eyeball'' the map $M$ or $M^{-1}$ by
simply straightening out or bending wires without passing through the
transition chain.  Every biword coming from a reduced pipe dream for
$\pi$ is a bounded pair for $\pi$, but the converse does not hold.
Thus, the map $M^{-1}$ rarely acts as the identity map.  In fact, we
know $M$ is a $p!$ to 1 map if we project the image onto the pipe
dreams.
\end{remark}

\section{$\q$-analogue of Macdonald's formula}\label{s:specialization}

In this section, we prove Theorem~\ref{t:macdonald.q.analog}
bijectively.  The first step is to rewrite the left side
of~\eqref{e:fomin.stanley.formula} as a specialization of the bounded
pair polynomial $\Bpoly_\pi(\mathbf{x},\mathbf{y};\q)$ defined in
Definition~\ref{d:bounded.pair.poly}.  We then prove that this
specialized polynomial satisfies a $\q$-analog of the Transition
Equation for Bounded Pairs, and thus argue that every step of our
algorithmic bijection $\BoundedPairTransMap$ respects the $\q$-weight
so $\MacdonaldMap$ is a $\q$-weight preserving bijection in addition to
preserving the underlying permutation.

Specializing each $x_i=\q$ and $y_i=\q^{-1}$ in
$\Bpoly_\pi(\mathbf{x},\mathbf{y};\q)$, where the third parameter is the
same formal variable $\q$, gives a one parameter version of the bounded pair
polynomial

\[
 \SpecializedBpoly_{\pi}(\q):
 = \sum \q^{(\mathbf{a},\mathbf{b})} =  \sum_{(a_1,a_2,\ldots, a_p) \in R(\pi)} \q^{\comaj(\mathbf{a})}[a_1]\cdot
 [a_2] \cdots [a_p]
\]
 where the first sum is over all bounded pairs
 $(\mathbf{a},\mathbf{b})$ for $\pi$ and $\q^{(\mathbf{a},\mathbf{b})}$ is  the \textit{combined weight}
\begin{equation}\label{e:combined.weight}
\q^{(\mathbf{a},\mathbf{b})}:=\q^{\comaj(\mathbf{a})}\prod_{i=1}^p
 \q^{a_i -b_i}.
\end{equation}

 For example, let $s_r$ be a simple transposition for some $r\geq 1$.
 Then $\SpecializedBpoly_{s_{r}}(\q)=[r]=1+ \q + \q^2 +\ldots + \q^{r-1}$.
 See also the example after Theorem~\ref{t:macdonald.q.analog}.

\begin{thm}[$\q$-Transition Equation for Bounded Pairs]\label{t:transitionBP.qanalog}
  For all permutations $\pi$ such that $\ell(\pi )=p>0$, the
  polynomials $\SpecializedBpoly_{\pi}(\q)$ satisfy the following recursive formula:
\begin{equation}\label{e:trans.qanalog}
\SpecializedBpoly_{\pi}(\q)= (1+\q+\cdots + \q^{p-1}) \q^{r-1} \SpecializedBpoly_{\nu}(\q) \ + \sum_{\substack{
q<r
\\
l(\pi)=l(\nu t_{qr})
}}
\SpecializedBpoly_{\nu t_{qr}} (\q)
\end{equation}
where $(r,s)$ is the lex largest inversion in
$\pi$, and $\nu=\pi t_{rs}$.  The base case of the
recurrence is $\SpecializedBpoly_{\id}(\q)=1$.
\end{thm}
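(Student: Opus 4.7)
The plan is to mimic the proof of Theorem~\ref{t:transitionBP} by showing that the bijection $\BoundedPairTransMap\colon \BoundedPairs(\pi) \to \Xset(\pi)$ constructed there is weight-preserving once $\Xset(\pi)$ is equipped with appropriate correction factors. I would assign to each $((\bfa',\bfb'),0) \in \BoundedPairs(\nu t_{qr}) \times \{0\}$ the weight $\q^{(\bfa',\bfb')}$, and to each $((\bfa',\bfb'),k) \in \BoundedPairs(\nu) \times [1,p]$ the weight $\q^{r-1}\q^{k-1}\q^{(\bfa',\bfb')}$. Summing these over $\Xset(\pi)$, invoking the inductive hypothesis on the inversion order $\prec$ (justified by Remark~\ref{rem:lex.largest.inversion.order}), and using $\sum_{k=1}^{p}\q^{k-1} = [p]_\q$ recovers the right-hand side of \eqref{e:trans.qanalog}.

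The main task is then to verify that $\BoundedPairTransMap$ carries $\q^{(\bfa,\bfb)}$ to the assigned weight of its image. In the $\bumped$ case each push in Algorithm~\ref{algorithm:little bump} simultaneously decrements $\bfa'_t$ and $\bfb'_t$, preserving $\sum_i(a_i-b_i)$; Proposition~\ref{t:little}(4) gives that the ascent sets of $\bfa$ and $\bfa'$ coincide, so $\comaj(\bfa)=\comaj(\bfa')$ and $\q^{(\bfa,\bfb)}=\q^{(\bfa',\bfb')}$. In the $\deleted$ case the pushes still preserve $\sum_i(a_i-b_i)$, while the terminal deletion drops an $(i,0)$ pair, giving $\sum_i(a_i-b_i) = \sum_j(a'_j-b'_j) + i$. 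To match the correction factor $\q^{r+k-2}$ it then suffices to prove the comaj identity $\comaj(\bfa) - \comaj(\bfa') = r+k-2-i$.

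I expect this comaj identity to be the main obstacle. My approach is to invoke the $\deleted$ part of Proposition~\ref{t:little}(4), which identifies the ascent set of $\Insert_k^i(\bfa')$ with that of $\bfa$, reducing the identity to the purely local insertion computation $\comaj(\Insert_k^i(\bfa')) - \comaj(\bfa') = r+k-2-i$. Only positions $k-1$ and $k$ of the ascent set can be directly altered by inserting $i$ at column $k$, while all ascents of $\bfa'$ at positions $\geq k$ shift up by one; a short case analysis on whether $\bfa'_{k-1} < i$ and $i < \bfa'_k$ hold, combined with the identification of $i$ as the row where the $r$-wire sits at the insertion point (as in Algorithm~\ref{algorithm:bti}) and the fact that $r$ is the last descent of $\pi$, should telescope to the desired value $r+k-2-i$.

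Combining the two cases and applying induction on $\prec$ (base case $\SpecializedBpoly_\id(\q)=1$) then yields $\SpecializedBpoly_\pi(\q) = \sum_{X \in \Xset(\pi)} \mathrm{weight}(X)$, which equals the right-hand side of \eqref{e:trans.qanalog}, completing the proof.
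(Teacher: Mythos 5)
Your overall framework---reusing the bijection $\BoundedPairTransMap\colon\BoundedPairs(\pi)\to\Xset(\pi)$ and attaching correction weights to the two parts of $\Xset(\pi)$---is the same as the paper's, and your $\bumped$ case is handled exactly as in the paper (ascent-set preservation from Proposition~\ref{t:little}(4) together with invariance of the differences $a_i-b_i$ under pushes). The gap is in the $\deleted$ case. Assigning weight $\q^{r-1}\q^{k-1}\q^{(\bfa',\bfb')}$ to $((\bfa',\bfb'),k)$ amounts to the pointwise claim that deletion at position $k$ always multiplies the combined weight by exactly $\q^{r+k-2}$, i.e.\ that $\comaj(\bfa)-\comaj(\bfa')=r+k-2-i$. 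That identity is false. What is actually true---and is the content of the paper's Lemma~\ref{l:aug.comaj.diff} on the augmented comaj difference word---is only a multiset statement: as $k$ ranges over $[1,p]$, the exponents $\comaj(\bfa)-\comaj(\bfa')+i$ form a \emph{permutation} of the interval $[r-1,r+p-2]$, and this permutation is generally far from the arithmetic progression your weights presuppose. For the running example $(4,3,5,6,4,3,5)$ with $j=5$ the paper computes $v^5=(7,6,5,4,8,9,10,3)$, a permutation of $[3,10]$ rather than $(3,4,\ldots,10)$; even for a single-letter word one gets decreasing sequences such as $(j,j-1)$. The sum $\sum_k\q^{v_k}=\q^{r-1}[p]_\q$ still comes out right, which is all the recurrence needs, but your term-by-term identity does not hold.

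Consequently the reduction you propose cannot close the argument. The quantity $\comaj(\Insert_k^i(\bfa'))-\comaj(\bfa')$ equals the number of ascents of $\bfa'$ in positions $\geq k$ plus a local correction at positions $k-1$ and $k$; it therefore depends on global data about $\bfa'$ and cannot be a function of $r$, $k$, $i$ alone, so no case analysis on whether $\bfa'_{k-1}<i$ and $i<\bfa'_k$ will telescope to $r+k-2-i$. The missing ingredient is precisely Lemma~\ref{l:aug.comaj.diff}: one must show that the map $k\mapsto\comaj(y_k^j(\bfa'))-\comaj(\bfa')+h_k^j(\bfa')-1$ is a bijection onto $[r-1,r+p-2]$, which the paper proves by a genuine induction on initial substrings of the reduced word, tracking the height of the $j$-wire and establishing that every entry of the difference word is a running record. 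To repair your proof, replace the pointwise weight assignment by the statement that, for each fixed $(\bfa',\bfb')\in\BoundedPairs(\nu)$, the multiset of weights carried by $\{((\bfa',\bfb'),k):k\in[1,p]\}$ is $\{\q^{r-1}\q^{(\bfa',\bfb')},\ldots,\q^{r+p-2}\q^{(\bfa',\bfb')}\}$, and supply a proof of that combinatorial fact.
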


A bijective proof of Theorem~\ref{t:transitionBP.qanalog} implies a
bijective proof of Theorem~\ref{t:macdonald.q.analog} since $\SpecializedBpoly_{\pi}(\q)$
is by definition the left side of \eqref{e:fomin.stanley.formula},
while the right side satisfies the same recurrence and base
conditions as \eqref{e:trans.qanalog} by the Transition Equation for
Schubert polynomials, Theorem~\ref{t:transitionA}.

To prove Theorem~\ref{t:transitionBP.qanalog}, we need to understand
how the maps $\BoundedPairTransMap$ and $\BoundedPairTransMap^{-1}$
change the combined weight for bounded pairs.  This
involves an investigation of the $\comaj$ statistic on reduced words.
Recall for motivation the well known formula due to MacMahon relating
the number of inversions to the $\comaj$  statistic
\begin{equation}\label{eq:macmahon}
\left(1+ \q + \cdots + \q^{n-1}\right) \sum_{\nu \in S_{n-1}}
\q^{\ell(\nu)} = \sum_{\pi \in S_n} \q^{\ell(\pi)} = \sum_{\pi \in S_n}
\q^{\comaj(\pi)}.
\end{equation}
The first equality follows simply by inserting $n$ into the one-line
notation for $\nu\in S_{n-1}$ and observing the change in the number
of inversions.  The second equality can similarly be proved using the
code of a permutation and the Carlitz bijection \cite{carlitz.1975},
see also \cite{gilespe.2016,skandera.2001}.  Note, the Carlitz
bijection is different than Foata's famous bijective proof of the
second equality \cite{Foata}.

We next state a mild generalization of a lemma due to Gupta
\cite{gupta.1978} about how $\comaj$ changes when one additional
letter is inserted into a word in every possible way.  Gupta's proof
covers the case where the numbers $a_k$ are all distinct.
Lemma~\ref{l:comaj} below extends this to sequences with no two
adjacent values equal.  We include a short proof of Gupta's lemma
below as a prelude to extending this analysis to reduced words in
Lemma~\ref{l:aug.comaj.diff} and the proof of
Theorem~\ref{t:transitionBP.qanalog}.  For another proof and further
applications of ``insertion lemmas'' in the liturature, see
\cite{Haglund-Loehr-Remmel.2005,Novick.2010}.  

Fix any sequence of real numbers $\mathbf{a}=(a_{1},\ldots
,a_{p})$. For $1\leq i\leq p+1$, let
$$\mathbf{a}^{j}_{i}:=\Insert_i^j(\mathbf{a}) =
(a_{1},\ldots ,a_{i-1}, j, a_{i},\ldots , a_{p})$$ be the
result of inserting $j$ into $\mathbf{a}$ to become column
$i$.  We also extend the definition of the comajor index to
arbitrary real sequences:
$\comaj(\mathbf{a}):=\sum_{i:a_i<a_{i+1}}i$.


\begin{lem}\cite{gupta.1978}\label{l:comaj}
Fix any sequence of real numbers $\mathbf{a}=(a_{1},\ldots ,a_{p})$
such that no two adjacent elements $a_i, a_{i+1}$ are equal, and let $j$ be a real number
different from $\{a_{1},\dots , a_{p} \}$.  Then, we have
\[
\bigl\{\comaj(\mathbf{a}^{j}_i) - \comaj(\mathbf{a}) \given 1\leq i\leq p+1 \bigr\} =
\{0,1,2,\ldots, p \}.
\]
\end{lem}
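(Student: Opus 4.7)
The plan is to prove the lemma by induction on $p$. The base case $p=0$ is immediate, since then $\mathbf{a}=()$ and the single insertion $\mathbf{a}^j_1=(j)$ has $\comaj=0=\comaj(\mathbf{a})$, so the difference set is $\{0\}$.

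For the inductive step I would peel off the last letter of $\mathbf{a}$. Set $\mathbf{a}^- := (a_1,\ldots,a_{p-1})$. Then for $1\leq i\leq p$ one has $\mathbf{a}^j_i = ((\mathbf{a}^-)^j_i, a_p)$, while $\mathbf{a}^j_{p+1} = (\mathbf{a}^-, a_p, j)$. Using the elementary identity $\comaj(b_1,\ldots,b_m,b_{m+1}) = \comaj(b_1,\ldots,b_m) + m\,[b_m<b_{m+1}]$ (where $[P]$ denotes $1$ if $P$ holds and $0$ otherwise) and observing that the last entry of $(\mathbf{a}^-)^j_i$ is $a_{p-1}$ for $i\leq p-1$ but is $j$ for $i=p$, I obtain, writing $\Delta_i := \comaj(\mathbf{a}^j_i) - \comaj(\mathbf{a})$ and defining $\Delta_i^-$ analogously from $\mathbf{a}^-$,
\begin{align*}
\Delta_i &= \Delta_i^- + [a_{p-1}<a_p], \qquad 1\leq i\leq p-1, \\
\Delta_p &= \Delta_p^- + p[j<a_p] - (p-1)[a_{p-1}<a_p], \\
\Delta_{p+1} &= p[a_p<j].
\end{align*}
The inductive hypothesis applied to $\mathbf{a}^-$ (whose consecutive entries remain distinct) yields $\{\Delta_i^- : 1\leq i\leq p\} = \{0,1,\ldots,p-1\}$, and the explicit value $\Delta_p^- = (p-1)[a_{p-1}<j]$ follows directly from $(\mathbf{a}^-)^j_p = (a_1,\ldots,a_{p-1},j)$.

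The remainder is a case analysis on the relative order of the three distinct values $a_{p-1}$, $a_p$, $j$, which produces six subcases. In each subcase one reads off $\Delta_p^-$, $\Delta_p$, and $\Delta_{p+1}$ from the displayed formulas; combined with the observation that adding the constant $[a_{p-1}<a_p]\in\{0,1\}$ shifts the multiset $\{\Delta_i^-\}_{i=1}^{p-1}$ in a controlled way, the union $\{\Delta_1,\ldots,\Delta_{p+1}\}$ turns out to be exactly $\{0,1,\ldots,p\}$. For instance, when $a_{p-1}<a_p<j$ the formulas give $\Delta_p^- = p-1$, so $\Delta_p = 0$ and $\Delta_{p+1} = p$; the inductive hypothesis then forces $\{\Delta_i^- : i\leq p-1\} = \{0,\ldots,p-2\}$, which shifts to $\{1,\ldots,p-1\}$, so that together $\{\Delta_1,\ldots,\Delta_{p+1}\} = \{0,1,\ldots,p\}$.

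The main obstacle is simply organizing the six-case bookkeeping; each subcase is a short elementary check. An alternative plan would be to compute $\Delta_i$ directly via the ascent set of $\mathbf{a}$ and then prove injectivity by a separate monotonicity argument, but I find the induction cleaner because surjectivity (and hence bijectivity, since domain and codomain both have size $p+1$) emerges together with the explicit computation of individual $\Delta_i$ values.
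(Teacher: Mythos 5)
Your proposal is correct and follows essentially the same route as the paper's proof: induction on the length by splitting off the last letter, with a six-way case analysis on the relative order of the last two letters and $j$ (the paper phrases this as appending a new letter $k$ to a shorter word, but the computation is identical). The only cosmetic point is that your displayed formulas reference $a_{p-1}$ and so formally require $p\geq 2$; you should either add $p=1$ as a second base case (as the paper does) or note that the coefficient $(p-1)$ makes the relevant term vanish there.
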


For example, take $\mathbf{a}=(2,3,5,2)$ and $j=4$.  Then
$\comaj(\mathbf{a})=3$.  The five words obtained from
$\mathbf{a}$ by inserting 4 in all columns are given below
with their comaj.
\[
\begin{array}{|c|c|c|c|}
\hline
i &  \bfa^{4}_i & \comaj(\bfa^{4}_i) & \comaj(\bfa^{4}_i) - \comaj (\bfa)\\
  \hline
1 & 42352 & 5 & 2\\
2 & 24352 & 4 & 1\\
3 & 23452 & 6 & 3\\
4 & 23542 & 3 & 0\\
5 & 23524 & 7 & 4\\
\hline
\end{array}
\]
As one can see, the difference in comaj takes on the five values from
0 to 4 in permuted order.  We refer to the word $(2,1,3,0,4)$ as the
\emph{comaj difference word}.

The conclusion of the lemma does not hold when adjacent
equal elements are allowed even if we extend the
definition of $\comaj$ to cover weak ascents.  For example,
if $\mathbf{a}=(1,1)$ and $j=1$, then
$\comaj(\mathbf{a}^{j}_i)-\comaj(\mathbf{a})$ is constant
for all $i=0,1,2$.

\begin{proof}
  The statement is easily checked for $p=0,1$.  Let $k$ be a real
  number distinct from $\{j,a_p\}$, and let
  $\mathbf{a'} =(a_{1},\ldots ,a_{p},k)$.  Assume by induction on
  $p \geq 1$ that the statement holds for $\mathbf{a}$, so we can assume
  $\{\comaj(\mathbf{a}^{j}_i)-\comaj(\mathbf{a}) \given 1\leq i\leq p+1
  \} = \{0,1,2,\ldots, p \}.$

  The final element in the comaj difference word for $\mathbf{a}$ can
  be determined from the relative order of $a_p,j$.  If $a_{p}<j$,
  then $\comaj(\mathbf{a}_{p+1}^j)-\comaj(\mathbf{a})=p$ and if
  $a_{p}>j$, then $\comaj(\mathbf{a}_{p+1}^j)-\comaj(\mathbf{a})=0$.

Next, consider the relative order of $a_{p}, j, k $ and how it affects
the comaj difference word.  The possible orders correspond with the 6 permutations
in $S_3$.  For instance, if $j<a_{p}<k$, then
$\comaj(\mathbf{a'}^{j}_i)-\comaj(\mathbf{a'})=\comaj(\mathbf{a}^{j}_i)-\comaj(\mathbf{a})+1$
for all $1\leq i\leq p$ since $k$ adds one new ascent to the right of all
these columns which gets shifted over when $j$ is inserted.  Since
$j<a_{p}$, $\comaj(\mathbf{a}_{p+1}^j)-\comaj(\mathbf{a})=0$ as noted
above so $\{\comaj(\mathbf{a'}^{j}_i) - \comaj(\mathbf{a'}) \given 1\leq
i\leq p \} = \{2,\ldots, p+1 \}$
by the induction hypothesis.
Furthermore, since $j<a_{p}<k$,
$\comaj(\mathbf{a'}_{p+1}^j)-\comaj(\mathbf{a'})=1$ and
$\comaj(\mathbf{a'}_{p+2}^j)-\comaj(\mathbf{a'})=0$.  Thus, the claim
holds for $\mathbf{a'}$ in this case as well.

Each of the remaining 5 cases is similar.  They only depend on the
relative order of $a_{p}, k, j$ and not on any of the specific
values $a_{1},\dots , a_{p}, j, k$.
We leave the remaining cases to the reader or their computer to check.
\end{proof}

We can now use Lemma~\ref{l:comaj} to give a bijective
proof of MacMahon's formula, Equation~\eqref{eq:macmahon}.
A similar argument is implicit in~\cite{carlitz.1975}.

\begin{cor} For all $n\geq 2$,
$$
\sum_{\pi \in S_n} \q^{\comaj(\pi)} =
(1+\q+\cdots + \q^{n-1}) \sum_{\nu \in S_{n-1}} \q^{\comaj(\nu)}.
$$
\end{cor}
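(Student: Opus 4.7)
The plan is to invoke Lemma~\ref{l:comaj} with inserted value $j=n$, applied to each $\nu \in S_{n-1}$ viewed as its one-line sequence $(\nu(1),\ldots,\nu(n-1))$. The enabling observation is that every $\pi \in S_n$ arises uniquely as $\nu^n_i = \Insert_i^n(\nu)$ for some pair $(\nu,i) \in S_{n-1} \times \{1,\ldots,n\}$: one recovers $\nu$ by deleting the letter $n$ from $\pi$'s one-line notation, and records $i$ as the column that $n$ occupied. This gives a bijection $S_{n-1}\times\{1,\ldots,n\}\longrightarrow S_n$, and the goal is to show that summing over the fiber above each $\nu$ produces the expected $\q$-factor $1+\q+\cdots+\q^{n-1}$.

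First I would check the hypotheses of Lemma~\ref{l:comaj}. Since $\nu$ is a genuine permutation, all entries of $(\nu(1),\ldots,\nu(n-1))$ are distinct, so in particular no two adjacent entries coincide; and $n$ does not appear among these entries. With $p=n-1$, the lemma then asserts
\[
\bigl\{\comaj(\nu^n_i) - \comaj(\nu) \given 1 \leq i \leq n\bigr\} = \{0, 1, \ldots, n-1\}.
\]
Multiplying by $\q^{\comaj(\nu)}$ and summing over $i$ therefore gives
\[
\sum_{i=1}^{n} \q^{\comaj(\nu^n_i)} = \bigl(1 + \q + \cdots + \q^{n-1}\bigr)\, \q^{\comaj(\nu)}.
\]

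Finally, I would sum this identity over all $\nu \in S_{n-1}$. The bijection above converts the resulting double sum on the left into $\sum_{\pi \in S_n} \q^{\comaj(\pi)}$, while the right side factors as $(1+\q+\cdots+\q^{n-1})\sum_{\nu \in S_{n-1}}\q^{\comaj(\nu)}$, which is precisely the claim. There is essentially no obstacle once Lemma~\ref{l:comaj} is in hand; the only points to verify are that the lemma's hypotheses hold (immediate, as noted) and that insertion of $n$ indeed enumerates $S_n$ with the right multiplicity (also immediate, since $n$ is the maximum element and hence its position in the one-line notation of $\pi$ is unambiguous).
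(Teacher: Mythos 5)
Your proof is correct and takes essentially the same approach as the paper: both insert $n$ into each of the $n$ possible positions of $\nu \in S_{n-1}$ and invoke Lemma~\ref{l:comaj} to conclude that the $\comaj$ increments are exactly $\{0,1,\ldots,n-1\}$. Your write-up is merely more explicit about checking the hypotheses and about the bijectivity of the insertion map.
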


\begin{proof}
  For each permutation $\nu \in S_{n-1}$ written in one-line notation,
  there are $n$ ways to insert $n$.  By Lemma~\ref{l:comaj}, the comaj
  statistic will increase by a distinct value in $\{0,1,\ldots, n-1\}$
  for each of these ways.
\end{proof}

Next we prove a variation of Lemma~\ref{l:comaj} involving reduced words.
The idea is similar to the proof of Lemma~\ref{l:comaj}, though the proof is much more technical.

Given a reduced word $\mathbf{a}=(a_{1},\ldots ,a_{p})$,
draw its \textbf{left}-labeled wiring diagram, as in the
first diagram of Figure~\ref{fig:insert.at.2}. Fix a
positive integer $j$ and consider the $j$-wire.  Let
$h^j_i(\mathbf{a})$ be the row of the $j$-wire in column
$i-1$, so $h^j_i(\mathbf{a})=s_{a_{i-1}}\cdots
s_{a_2}s_{a_1}(j)$. In the notation of
Section~\ref{ss:reduced words},
$h^j_i(\bfa)=\pi_{i-1}^{-1}(j)$, where $\pi_t$ denotes the
permutation at time $t$ of $\bfa$. We insert a new crossing
in column $i$ with its left foot meeting the $j$-wire;
i.e. \  define
$$\widetilde{\mathbf{a}}:=\Insert_{i}^{h^j_i(\mathbf{a})-1}(\bfa) =
(a_1,a_2,\ldots,a_{i-1},h^j_i(\mathbf{a})-1,a_i,\ldots,
a_p).$$  Then, $\widetilde{\mathbf{a}}$ may or may not be a reduced
word itself, but it is nearly reduced at $i$.  Now we want
to apply a Little bump to $\widetilde{\mathbf{a}}$.  To be consistent
with our earlier definitions, we write this in terms of
\emph{bounded} bumping: every word is a bounded word for
itself, so we can apply the bounded bumping algorithm with
input $(\widetilde{\mathbf{a}}, \widetilde{\mathbf{a}}, i, +)$. Say
$\Bump_{i}^{+}(\widetilde{\mathbf{a}}, \widetilde{\mathbf{a}}) =
(\widetilde{\mathbf{a}}',\widetilde{\mathbf{b}}',g,h,\outcome)$. Set
\[
y_i^j(\mathbf{a}):=\widetilde{\bfa}'.
\]

\begin{figure}
\begin{center}
  \includegraphics[width=\textwidth]{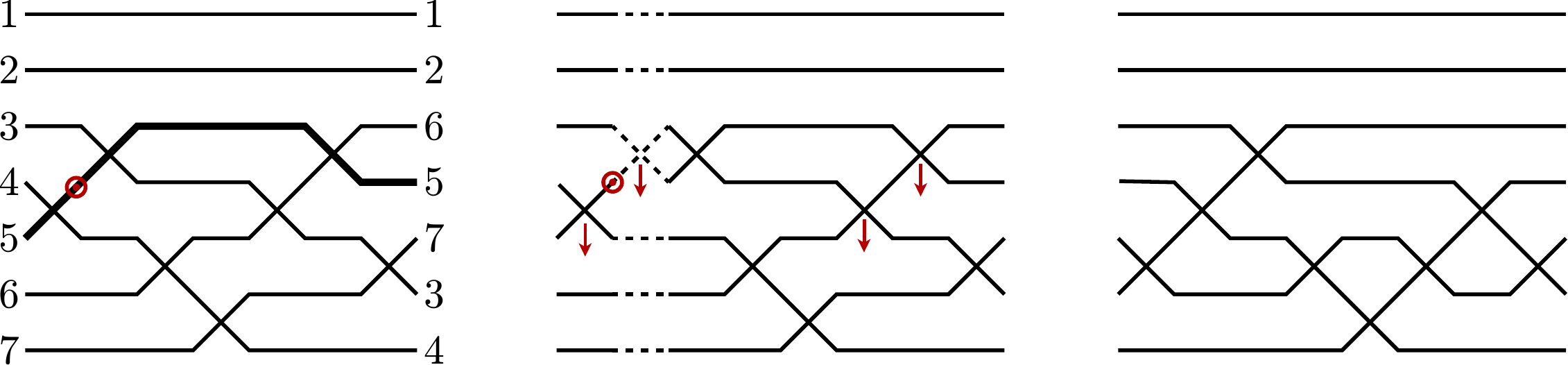} \caption{An
example of the transformation from $\mathbf{a}$ (left) to
$\widetilde{\mathbf{a}}$ (middle) to $\widetilde{\mathbf{a}}'$
(right), with $i=2$ and $j=5$.  The row of wire $5$ just before the
$2$nd crossing is $h_2^5(\mathbf{a})=4$, so we insert a crossing on
row 3 in such a way as to become column 2.  Then we apply an
increment-bump to this crossing to obtain $\widetilde{\mathbf{a}}'$.}
\label{fig:insert.at.2}
\end{center}
\end{figure}

Define the \textit{augmented comaj difference word} for
$\mathbf{a}$ along wire $j$ to be
$\mathbf{v}^j(\mathbf{a}):=(v^j_1(\mathbf{a}), \ldots,
v^j_{p+1}(\mathbf{a}))$ where
$$v^j_i(\mathbf{a}) = \comaj(y_i^j(\mathbf{a})) - \comaj(\mathbf{a})
+h^j_i(\mathbf{a}) -1.$$ The augmented comaj difference
word measures the change in the power of $\q$ in the
combined weight when the $\BoundedPairTransMap$  algorithm
ends in a deletion in each column of step 2(a).

Consider the running example of the reduced word
$\mathbf{a}=(4,3,5,6,4,3,5)$, and fix $j=5$. We compute
$\comaj(\mathbf{a}) = 11$.  The wiring diagram for $\bfa$ is shown in
the first diagram of Figure~\ref{fig:insert.at.2}.  Observe that the
row of the $5$-wire in the wiring diagram decreases to 3 and then
increases to 4 in matrix coordinates.  In the second diagram in
Figure~\ref{fig:insert.at.2}, we show the wiring diagram for
$\widetilde{a}=(4,3,3,5,6,4,3,5)$ computed by inserting an extra
crossing in the second column with its left foot on the $5$-wire, so
inserting a 3 into $\bfa$.  The arrows indicate pushes in the bounded
bumping algorithm for $\Bump_{2}^{+}(\widetilde{\mathbf{a}},
\widetilde{\mathbf{a}})$; they occur in columns 2,1,7,6 in sequential
order.  The third diagram shows the wiring diagram of $y_{2}^{5}(\bfa) = \widetilde{\mathbf{a}}'
= (5,4,3,5,6,5,4,5)$.  Compute $\comaj(y_{2}^{5}(\bfa)) = 14$, so
$v_{2}^{5}(\bfa) =14-11+3 =6$.
Next we display the data to compute
the augmented comaj difference word for $\bfa$.

$$
\begin{array}{|c|c|c|c|c|c|c|}
\hline
i & h^5_i(\mathbf{a}) &  \text{insert}  & y_i^5(\mathbf{a}) & \comaj(y_i^5(\mathbf{a}))  &h^5_i(\mathbf{a})-1  & v^5_i(\mathbf{a}) \\
\hline
1 & 5 & 4 4 3 5 6 4 3 5 & 5 4 3 5 6 5 4 5 & 14 &  4 & 7 \\
2 & 4 & 4 3 3 5 6 4 3 5 & 5 4 3 5 6 5 4 5 & 14 &  3 & 6 \\
3 & 3 & 4 3 2 5 6 4 3 5 & 5 4 3 5 6 5 4 5 & 14 &  2 & 5 \\
4 & 3 & 4 3 5 2 6 4 3 5 & 5 4 5 3 6 5 4 5 & 13 &  2 & 4 \\
5 & 3 & 4 3 5 6 2 4 3 5 & 5 4 5 6 3 5 4 5 & 17 &  2 & 8 \\
6 & 3 & 4 3 5 6 4 2 3 5 & 5 4 5 6 5 3 4 5 & 18 &  2 & 9 \\
7 & 4 & 4 3 5 6 4 3 3 5 & 5 4 5 6 5 3 4 5 & 18 &  3 & 10 \\
8 & 4 & 4 3 5 6 4 3 5 3 & 4 3 5 6 4 3 5 4 & 11 &  3 & 3 \\
\hline
\end{array}
$$
Thus, $v^5(\mathbf{a})=(7, 6, 5, 4, 8, 9, 10, 3)$.  Further examples
appear in the Appendix which demonstrates the computation of
$\mathbf{v}^5
(\mathbf{a})$ for all the initial substrings of $(4, 3,
5, 6, 4, 3, 5)$.

\begin{lem}\label{l:aug.comaj.diff}
  Given a reduced word $\mathbf{a}=(a_{1},\ldots ,a_{p})$ and a fixed
  positive integer $j$, the augmented comaj difference word
  $\mathbf{v}^j(\mathbf{a})=(v^j_1(\mathbf{a}), \ldots, v^j_{p+1}(\mathbf{a}))$
  is a permutation of the integers in the closed interval $[h^j_{p+1}(\mathbf{a}) -1,
  h^j_{p+1}(\mathbf{a}) +p -1]$.  Moreover, every entry of
  $\mathbf{v}^j(\mathbf{a})$ is a record,
  i.e.\ it is either greater than all preceding entries or
  less than all preceding entries.
\end{lem}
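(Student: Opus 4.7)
The plan is to first exploit Proposition~\ref{t:little}(4) to convert the Little-bump-based definition of $v^j_i(\mathbf{a})$ into a statement purely about comaj change under insertion. Applying the bounded bumping algorithm with input $(\widetilde{\mathbf{a}}, \widetilde{\mathbf{a}}, i, +)$, step 3 (deletion) is never triggered since increment-pushes of a positive bounded word cannot produce $0$; hence the outcome is always $\bumped$, and the ascent set is preserved, so $\comaj(y^j_i(\mathbf{a})) = \comaj(\widetilde{\mathbf{a}})$. This yields
\begin{equation*}
v^j_i(\mathbf{a}) \;=\; \comaj(\widetilde{\mathbf{a}}) - \comaj(\mathbf{a}) + h^j_i(\mathbf{a}) - 1,
\end{equation*}
which depends only on the insertion of $h^j_i-1$ at position $i$ into $\mathbf{a}$. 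After this reduction, the Little bump is no longer needed explicitly, and we are left with a purely combinatorial question about how $\comaj$ behaves under a single insertion whose value varies with the insertion position.

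I would then proceed by induction on $p = |\mathbf{a}|$. The base case $p = 0$ gives the single entry $v^j_1 = j-1$, which fills the singleton range $[j-1, j-1]$ and is trivially a record. For the inductive step, set $\mathbf{a}' = (a_1, \ldots, a_p, a_{p+1})$ and split into three cases based on how wire $j$ moves at column $p+1$: up ($a_{p+1} = h^j_{p+1}$, so $h^j_{p+2} = h^j_{p+1} - 1$), down ($a_{p+1} = h^j_{p+1} - 1$, so $h^j_{p+2} = h^j_{p+1} + 1$), or unchanged (otherwise, with $h^j_{p+2} = h^j_{p+1}$). In each case, compute $v^j_{p+2}(\mathbf{a}')$ directly, observing that insertion at the last position requires only a single push before terminating, and relate $v^j_i(\mathbf{a}')$ to $v^j_i(\mathbf{a})$ for $i \leq p+1$ by tracking the change in the number of ascents at positions $\geq i$ caused by appending $a_{p+1}$, together with any new ascent or descent at position $p+1$ of $\widetilde{\mathbf{a}'}$.

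The main obstacle is the case-by-case verification that the new terminal entry $v^j_{p+2}(\mathbf{a}')$ lands precisely at the new minimum or new maximum of the appropriately shifted range $[h^j_{p+2}(\mathbf{a}') - 1,\ h^j_{p+2}(\mathbf{a}') + p]$, so that, combined with the inductive record structure for $\mathbf{v}^j(\mathbf{a})$, the extended sequence $\mathbf{v}^j(\mathbf{a}')$ still bijects onto the contiguous range and remains a record sequence. This parallels the inductive proof of Lemma~\ref{l:comaj}, but is complicated by two extra features: the inserted value $h^j_i(\mathbf{a}) - 1$ varies with $i$ rather than being fixed, and it may coincide with existing entries of $\mathbf{a}$, so the no-adjacent-equal-values hypothesis of Lemma~\ref{l:comaj} can fail -- precisely in the situations where the Little bump fires nontrivially. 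Carefully managing these coincidences, which correspond exactly to the defect columns traversed by the bump, is the technical heart of the argument.
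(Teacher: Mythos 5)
Your reduction of $v^j_i(\mathbf{a})$ to a statement about comaj under a single insertion is the right idea and is exactly what the paper does, but the precise identity you write down is wrong, and the error is not a side issue to be managed later in the induction—it is the crux.

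You claim $\comaj\bigl(y^j_i(\mathbf{a})\bigr)=\comaj(\widetilde{\mathbf{a}})$ by invoking Proposition~\ref{t:little}(4). This fails whenever $\widetilde{\mathbf{a}}$ has adjacent equal entries, which happens precisely when $a_{i-1}=h^j_i(\mathbf{a})-1$. Take the paper's running example $\mathbf{a}=(4,3,5,6,4,3,5)$, $j=5$, $i=7$: here $h^5_7=4$ and $a_6=3=h^5_7-1$, so $\widetilde{\mathbf{a}}=(4,3,5,6,4,3,3,5)$ has $\comaj(\widetilde{\mathbf{a}})=12$, while the Appendix table gives $\comaj\bigl(y^5_7(\mathbf{a})\bigr)=18$. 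The paper instead uses
$$\comaj\bigl(y_i^j(\mathbf{a})\bigr)=\comaj\bigl(a_1,\ldots,a_{i-1},\,h^j_i(\mathbf{a})-\tfrac12,\,a_i,\ldots,a_p\bigr),$$
inserting a \emph{half}-integer to break all ties; in the example this yields $18$, as required. The underlying point is that when $\widetilde{\mathbf{a}}$ has a tie at positions $i-1,i$, the very first increment-push already changes the ascent status at column $i-1$, and it is only the subsequent pushes whose ascent set is stable; so the invariant is the ascent set of $\Push^+_i\widetilde{\mathbf{a}}$, not of $\widetilde{\mathbf{a}}$, and that is exactly what the $-\tfrac12$ formula captures. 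Proposition~\ref{t:little}(4) is only safe to quote as stated when the input word has no adjacent equal entries.

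The remainder of your plan—induction on $p$ with the three-way split $1\le i\le p$, $i=p+1$, $i=p+2$, the additive shift by $\delta$ in the first regime, and a direct check that the two new entries land at an extreme of the correctly shifted interval—is precisely the paper's argument. But that induction is run entirely off the insertion identity, so the case $1\le i\le p$ (which uses $v^j_i(\mathbf{a'})=v^j_i(\mathbf{a})+\delta$) silently depends on the $-\tfrac12$ version. You did correctly sense that the coincidences $a_{i-1}=h^j_i-1$ are where the bump fires nontrivially, but the fix has to happen in the reduction step itself, not be deferred to the case analysis.
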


The proof is by induction based on initial substrings of $\mathbf{a}$,
similar to the proof of Lemma~\ref{l:comaj}.  It is complicated by the
fact that we are inserting a crossing along the $j$-wire so the value
which we are inserting varies with column.

\begin{proof}


We  prove the statement by induction on $p$.  The statement is
true for the empty reduced word since $\mathbf{v}^j() = (j-1)$.  There
are 4 cases to check if $p=1$.  Say $\mathbf{a}=(a_{1})$, so
$h^j_1(\mathbf{a})=j$ and $h^j_2(\mathbf{a})=s_k(j).$ Then,
\[
\mathbf{v}^j((a_{1}))= \begin{cases}
(j,j-1)  & j<a_{1}  \\
(j,j+1)  & j=a_{1}\\
(j-1,j-2)  & j=a_{1}+1\\
(j-1,j)  & j>a_{1}+1.
\end{cases}
\]
Thus, all 4 cases for $p=1$ satisfy the statements in
the lemma.

Assume the lemma holds by induction for all reduced words up to length
$p\geq 1$.  We will show it holds for all reduced words of length $p+1$.
Let
\[
\mathbf{a'}=(a_{1},\ldots ,a_{p},k)
\]
be a reduced word extending $\mathbf{a}$.  Thus, $h^j_i(\mathbf{a'})=
h^j_i(\mathbf{a})$ for $1\leq i \leq p+1$.  Let
\[
h=h^j_{p+1}(\mathbf{a})=s_{a_{p}}\cdots s_{a_{1}}(j),
\]
then $h^j_{p+2}(\mathbf{a'}) = s_k(h)$.

Our goal is to compute the augmented comaj differences
$v^j_i(\mathbf{a}')$ for $1\leq i\leq p+2$.  We will treat the three
cases $1\leq i\leq p$, $i=p+1$ and $i=p+2$ separately.

First consider the case $1 \leq i \leq p$.  The bounded bumping
algorithm preserves the ascent set of a word by
Proposition~\ref{t:little}(4), so one can observe
that $$\comaj(y_i^j(\mathbf{a})) =
\comaj(a_1,a_2,\ldots,a_{i-1},h^j_i(\mathbf{a})-1/2,a_i,\ldots,
a_p).$$ This fact will allow us to compute the augmented comaj differences
without knowing the exact sequence of pushes required in the bounded
bumping algorithm.
Using the above
observation,
\begin{align*}
\comaj(y_i^j(\mathbf{a'})) =&\comaj(y_i^j(\mathbf{a}))+ (p+1)\cdot \delta,
\end{align*}
while
$$\comaj(\mathbf{a'}) = \comaj(\mathbf{a}) + p\cdot \delta,
$$
 where
$$\delta:=\begin{cases}
1,&a_p<k,\\
0,&\text{otherwise.}
\end{cases}
$$
Since also
$h^j_i(\mathbf{a'})= h^j_i(\mathbf{a})$, we conclude by combining the
last three equations that
$$v_i^j(\mathbf{a'}) = v_i^j(\mathbf{a}) + \delta.$$
We compute $v_{p+1}^j(\mathbf{a})=\comaj(y_{p+1}^j(\mathbf{a})) -
\comaj(\mathbf{a}) +h^j_{p+1}(\mathbf{a}) -1 = p\cdot \delta + h -1.$
By induction, we know $(v_1^j(\mathbf{a}), \ldots, v_p^j(\mathbf{a}))$
is a permutation with every entry being a record of the interval
$[h-1,h +p -1 ] \setminus \{p\cdot \delta + h -1\}$.  Therefore
$(v_1^j(\mathbf{a'}), \ldots, v_p^j(\mathbf{a'}))$ is in fact a
permutation of an interval of consecutive integers such that every
entry is a record.

Now we consider the case $i=p+1$.  We claim that the value
$v_{p+1}^j(\mathbf{a'})$ is completely determined by the
values $ h,a_p,k,p$ as follows.  Note that $a_p$ exists since $p\geq
1$ by assumption, and $a_p \neq k$
since they are adjacent in a reduced word. All possible
cases are
\[
v_{p+1}^j(\mathbf{a'}) = \begin{cases}
h+p & a_p>k\geq h\\
h-1 & a_p<k<h\\
p\cdot \delta + h -1 +\delta  & \text{otherwise. }
\end{cases}
\]
We conclude here that $(v_1^j(\mathbf{a'}), \ldots,
v_{p+1}^j(\mathbf{a'}))$ is in fact a permutation of an interval of
consecutive integers such that every entry is a record.  In the case
$a_p>k\geq h$, the interval is $[h,h+p]$, and in the other two cases
the interval is $[h-1,h+p-1]$.

Finally, consider the case $i=p+2$.  Again, the value
$v_{p+2}^j(\mathbf{a'})$ is straightforward to calculate from the
definition of the augmented comaj vector given $k$ and the fact that
$s_{k}(h)=h^j_{p+2}(\mathbf{a'})$ mentioned above:
$$
v_{p+2}^j(\mathbf{a'}) = \begin{cases}
s_{k}(h)-1 & k\geq s_k(h)\\
p+s_{k}(h) & k < s_k(h).
\end{cases}
$$
Thus, $v_{p+2}^j(\mathbf{a'})$ will be an extreme value in the
interval $[s_{k}(h) -1, s_{k}(h) +p]$ as required for the lemma.  All
that remains to prove the lemma is to ascertain how
$v_{p+2}^j(\mathbf{a'})$ relates to $[h,h+p]$ when $a_{p}>k\geq h$ or
$[h-1,h+p-1]$ otherwise.  This again breaks into cases depending on if
$s_{k}(h)=h,h-1,h+1$.  We leave this straightforward verification to
the reader.
\end{proof}

\bigskip

\begin{proof}[Proof of Theorem~\ref{t:transitionBP.qanalog}] As
mentioned in the introduction to this section, we will show that the
bijection $\BoundedPairTransMap$ from Algorithm~\ref{algorithm:bt}
preserves the $\q$-weight in the following sense.  Assume
$\BoundedPairTransMap(\mathbf{a},\mathbf{b}) =
((\mathbf{e},\mathbf{f}), \dvalue) \in \Xset(\pi)$.  Let $j =
\pi(r)$ so that $h_{p+1}^j(\bfa)=r$.  We will show that the combined weight defined in
\eqref{e:combined.weight} satisfies
\begin{equation}\label{e:q.weights}
\q^{(\mathbf{a},\mathbf{b})} =
\begin{cases}
\q^{v_\dvalue^j(\mathbf{e})}
\q^{(\mathbf{e},\mathbf{f})}  &  \dvalue >0  \\
\q^{(\mathbf{e},\mathbf{f})}  & \dvalue = 0.
\end{cases}
\end{equation}
Once this is complete, we know from Lemma~\ref{l:aug.comaj.diff} that
$v^j(\bfa)$ is a permutation of $[r-1,r+p-1]$.  Hence,
Theorem~\ref{t:transitionBP.qanalog} follows by a straightforward
verification.

Every $((\mathbf{e},\mathbf{f}), \dvalue) \in \Xset(\pi)$ corresponds
with a pair $q\leq r$ such that $(\mathbf{e},\mathbf{f})$ is a bounded
pair for $\nu t_{q,r}$.  Recall, $\dvalue =0$ if and only if $q<r$.

In the case $\dvalue = 0$, the combined weight is preserved since the
bounded bumping algorithm preserves the ascent set of a word by
Proposition~\ref{t:little}(4).  Furthermore, the differences $a_i
-b_i$ for all $i$ are preserved by every push step in the bounded
bumping algorithm.

When $\dvalue>0$, $(\mathbf{e},\mathbf{f})$ is a bounded pair for
$\nu$.  The computation for
$\BoundedPairTransMap(\mathbf{a},\mathbf{b})$ removed a letter from
column $\dvalue$ on the last step.  The crossing removed had its right
foot on the wire labeled $r$ in the right-labeled diagram for
$\mathbf{e}$, or equivalently the wire labeled $j=\pi(r)$ when the
diagram is labeled increasing along the left side.  Therefore, the row
of the removed crossing is $h_\dvalue^j(\mathbf{e})-1$.  We also must
have $y_\dvalue^j(\mathbf{e}) = \mathbf{a}$ by definition of the
$y_\dvalue^j$ map and the fact that the bounded bumping algorithm is
reversible by Proposition~\ref{t:little}(1).  So
$h_\dvalue^j(\mathbf{e})-1 = a_\dvalue - b_\dvalue$.  In all columns
$i \neq \dvalue$, the difference $a_i -b_i$ is preserved by every push step in the
bounded bumping algorithm.  Using the
notation $$v_\dvalue^j(\mathbf{e}) = \comaj(\mathbf{a}) -
\comaj(\mathbf{e}) + h_\dvalue^j(\mathbf{e})-1,$$ we have shown
$\q^{(\mathbf{a},\mathbf{b})} = \q^{v_\dvalue^j(\mathbf{e})}
\q^{(\mathbf{e},\mathbf{f})}$.
\end{proof}

\section{Fomin-Kirillov Formulas}\label{s:fk}

Fomin and Kirillov \cite{Fomin-Kirillov} gave several
identities generalizing Macdonald's formula, and posed the
problem of finding bijective proofs.  We show that our
bijection implies a bijective proof of one of these
identities involving dominant permutations.  We first state
the identity, starting with an important special case.  In
the interest of brevity, we will assume the reader has some
familiarity with plane partitions and standard Young
tableaux.  More information on these objects may be found
in the cited references.

Let $w_{0}=[n,n-1,\ldots, 1] \in S_{n}$. The following
formula specializes to Macdonald's formula
\eqref{e:macdonald.formula} when $x=0$ and the coefficient
of the leading term is $\#R(w_{0})$.  The last quantity
equals the number of standard Young tableaux of staircase
shape with $n-1$ rows, as proved by Stanley \cite{S3}, and
later bijectively by Edelman and Greene
\cite{edelman-greene}.

\begin{thm}\cite[Thm.~1.1]{Fomin-Kirillov} \label{t:fomin.kirillov}
  We have the following identity of polynomials in $x$ for
  the permutation $w_0 \in S_n$:
\begin{equation}\label{e:fomin.kirillov.formula}
  \sum_{(a_1,\ldots, a_{\binom{n}{2}} ) \in R(w_0)} (x+a_1) \cdots (x+a_{\binom{n}{2}}) \ = \binom{n}{2} ! \prod_{1\leq i < j\leq n} \frac{2x+i+j-1}{i+j-1}.
\end{equation}
\end{thm}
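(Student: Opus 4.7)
The plan is to rewrite the LHS of \eqref{e:fomin.kirillov.formula} combinatorially, apply an extension of our Macdonald bijection $\MacdonaldMap$, and then invoke results of Lenart \cite{Lenart.2004} and Serrano--Stump \cite{Serrano.Stump.FPSAC, Serrano.Stump.2012} to identify the enumeration with the product on the RHS. Expanding each factor as $(x+a_i) = x + \sum_{b_i=1}^{a_i} 1$ and summing over reduced words $\bfa \in R(w_0)$ rewrites the LHS as a weighted count of ``$x$-extended bounded pairs'' $(\bfa,\bfb)$, where each $b_i$ is either a star token of weight $x$ or an integer in $[1,a_i]$ of weight~$1$. Genuine bounded pairs (no stars) correspond to the $x=0$ specialization and recover Macdonald's identity for $w_0$, where $\mathfrak{S}_{w_0}(1,\ldots,1)=1$.

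Next I extend the Macdonald map so that a star $b_i=*$ is treated as a forced initial deletion in the bounded bumping algorithm. Since $w_0$ has a unique reduced pipe dream $D_0$, namely the full staircase of $+$'s, the usual $\MacdonaldMap$ sends each genuine bounded pair bijectively to a pair $(\mathbf{c}, D_0)$ with $\mathbf{c}\in\codes(w_0)$. The extended map produces pairs $(\mathbf{c}, T)$ where $T$ is a decorated object over the staircase shape that records which crossings have been removed on account of stars. Using the transition chain framework from Section~\ref{s:trans} together with the reversibility guaranteed by Proposition~\ref{t:little}, this extension should be a weight-preserving bijection whose $x$-weight equals the number of star-induced deletions.

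For the final identification, the Serrano--Stump bijection puts reduced pipe dreams (and, suitably generalized, their decorated variants) in correspondence with reverse plane partitions of staircase shape $(n-1,n-2,\ldots,1)$, while Lenart's formula expresses the relevant Schubert-polynomial specializations in terms of these tableaux. Together, they realize the $x$-weighted enumeration of decorated pipe dreams as the classical hook-content product
\[
\prod_{1\leq i<j\leq n}\frac{2x+i+j-1}{i+j-1}.
\]
Multiplying by $|\codes(w_0)|=\binom{n}{2}!$ then yields the RHS of \eqref{e:fomin.kirillov.formula}. The main obstacle will be calibrating the extended Macdonald map so that the star statistic on the input side matches exactly the weight statistic on the output RPP under the Serrano--Stump translation; this comes down to a careful bookkeeping matching the deletion columns chosen by the bumping algorithm to cells of the staircase, which we anticipate follows by combining the Serrano--Stump construction with the transition chain data from Corollary~\ref{cor:transition.chain}.
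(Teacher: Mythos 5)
The paper's route to Theorem~\ref{t:fomin.kirillov} is substantially simpler than what you propose, and your star-token expansion introduces a gap that I do not think you can close cleanly. The paper observes that the map $(a_1,\ldots,a_p)\mapsto(x+a_1,\ldots,x+a_p)$ is a bijection $R(w_0)\to R(1^x\times w_0)$. Thus the left side of \eqref{e:fomin.kirillov.formula} is literally $\sum_{\bfa'\in R(1^x\times w_0)}a_1'\cdots a_p'$, and Theorem~\ref{t:macdonald} (already proved bijectively via $\MacdonaldMap$) applied to $1^x\times w_0$ gives $\binom{n}{2}!\,\mathfrak{S}_{1^x\times w_0}(1,\ldots,1)$. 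Wachs's flagged Schur function expression together with Proctor's evaluation then identifies $\mathfrak{S}_{1^x\times w_0}(1,\ldots,1)$ with the hook-content product, since $1^x\times w_0$ is vexillary with staircase shape. Equivalently, expanding $(x+a_i)=\sum_{b_i=1}^{x+a_i}1$ exhibits the left side directly as $|\BoundedPairs(1^x\times w_0)|$, so no new bijection is required at all --- only the one already built for Macdonald's identity. Lenart and Serrano--Stump enter the paper's argument only in the $\q$-refined Theorem~\ref{t:fomin.kirillov.2}, not here.

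Your expansion $(x+a_i)=x+\sum_{b_i=1}^{a_i}1$ splits each factor into an "$x$-star" plus a genuine bounded choice, which decouples the shift from the bound and forces you to build an extended Macdonald map with a new "forced deletion" rule for stars. Two concrete problems arise. First, the semantics of a forced deletion are not actually defined: the bounded bumping algorithm deletes only when a decrement-push produces $b_t'=0$, and a star in position $i$ does not tell the algorithm where to start bumping or which crossing ends up deleted, so the target object $T$ is not pinned down. Second, and more fundamentally, your construction would have to match the coefficient of $x^k$ on the left (the number of star-configurations with exactly $k$ stars, times $\binom{n}{2}!$) with the coefficient of $x^k$ in Proctor's product; those coefficients are not individually realized by a natural statistic on RPPs bounded by $x$, so the claimed weight-preserving bijection has no clear codomain. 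You flag this yourself as "the main obstacle," but it is not a bookkeeping issue --- it is the crux, and the paper sidesteps it entirely by absorbing $x$ into the permutation rather than into a decoration. I would recommend replacing the star expansion with the shift to $1^x\times w_0$ and citing Wachs/Proctor for the Schubert specialization, which reduces the whole theorem to a corollary of Theorem~\ref{t:macdonald}.
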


The second factor on the right side of
\eqref{e:fomin.kirillov.formula} counts the number of plane
partitions with maximum entry $x$.    For a permutation
$\pi=[\pi(1),\ldots, \pi(n)]\in S_n$, write $1^{x} \times
\pi =[1,2,\ldots, x, \pi(1)+x, \pi(2)+x, \ldots,
\pi(n)+x]$.
Via theorems of Wachs \cite{wachs} and
Proctor \cite{proctor1, proctor2, koike-terada}, the second
factor on the right of \eqref{e:fomin.kirillov.formula} is
also the number of terms in the Schubert polynomial for
$1^{x} \times w$ when $x$ is a nonnegative integer.  A
bijection between the sets $R(1^{x} \times \pi)$ and $R(\pi
)$ is given by $(a_1,a_2,\ldots, a_p) \mapsto
(x+a_1,x+a_2,\ldots, x+a_p)$.

Fomin and Kirillov gave a $\q$-analog of the above identity
in which, moreover, $w_0$ is generalized to an arbitrary
dominant permutation. A \emph{dominant permutation} is one
whose code is weakly decreasing.  For any partition
$\lambda \vdash p$, let $\sigma_{\lambda}$ be the dominant
permutation in $S_{p+1}$ whose code is $\lambda$ followed
by zeros.  Let $\rpp^\lambda(x)$ be the set of \emph{weak
reverse plane partitions} whose entries are all in the
range $[0,x]$ for $x \in \mathbb{N}$.  This is the set of
$x$-bounded fillings of $\lambda$ with rows and columns
weakly increasing to the right and down.  Given a weak
reverse plane partition $P$, let $|P|$ be the sum of its
entries. Let
$$[rpp^\lambda(x)]_\q = \sum_{P\in rpp^\lambda(x)} \q^{|P|}.$$

\begin{thm}\cite[Thm.~3.1]{Fomin-Kirillov} \label{t:fomin.kirillov.2}
For any partition $\lambda \vdash p$ and its associated
dominant permutation $\sigma_\lambda$, we have the
following identity for all $x \in \mathbb{N}$:
\begin{align}\label{e:fomin.kirillov.formula.2}
  \sum_{(a_1,a_2,\ldots, a_p) \in R(\sigma_{\lambda})} \q^{\comaj(a_1,a_2,\ldots, a_p)} &[x+a_1]\cdot [x+a_2] \cdots [x+a_p] \\
  &= [p]! \   \mathfrak{S}_{1^{x} \times \sigma_{\lambda}}(1,\q,\q^2,\ldots, \q^{x+p})\\
  &= [p]! \ \q^{b(\lambda)}\   [rpp^{\lambda}(x)]_{\q}
\end{align}
where $b(\lambda) = \sum_i (i-1) \lambda_i$.
\end{thm}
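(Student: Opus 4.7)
My plan is to prove the two equalities of \eqref{e:fomin.kirillov.formula.2} separately. The first equality is essentially a substitution that reduces the statement to the already-proved Fomin--Stanley identity, and the second equality is a classical Schubert-polynomial-to-reverse-plane-partition correspondence which we can handle by invoking Lenart and Serrano--Stump.

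For the first equality, I would introduce the shift map $\varphi(a_1, \ldots, a_p) := (x+a_1, \ldots, x+a_p)$, which is a bijection $R(\sigma_\lambda) \to R(1^x \times \sigma_\lambda)$: inserting $x$ fixed points at the front of $\sigma_\lambda$ corresponds exactly to shifting all simple reflections $s_i \mapsto s_{i+x}$ in any wiring diagram. The ascent set of a word depends only on the relative order of consecutive entries, so $\comaj(\varphi(\mathbf{a})) = \comaj(\mathbf{a})$, and each factor $[x+a_i]$ on the left of \eqref{e:fomin.kirillov.formula.2} becomes $[b_i]$ with $b_i = x + a_i$. After substitution the left-hand side becomes
$$\sum_{\mathbf{b} \in R(1^x \times \sigma_\lambda)} q^{\comaj(\mathbf{b})}\,[b_1] \cdots [b_p],$$
which is the left side of \eqref{e:fomin.stanley.formula} for the permutation $1^x \times \sigma_\lambda$ of length $p$. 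Applying Theorem~\ref{t:macdonald.q.analog} (proved bijectively via the Macdonald map $M$ in Section~\ref{s:specialization}) converts this into $[p]\Fact \, \mathfrak{S}_{1^x \times \sigma_\lambda}(1, q, \ldots, q^{x+p})$, which is the middle quantity in \eqref{e:fomin.kirillov.formula.2}.

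For the second equality, I would invoke the combinatorial identity
$$\mathfrak{S}_{1^x \times \sigma_\lambda}(1, q, \ldots, q^{x+p}) = q^{b(\lambda)} \, [rpp^\lambda(x)]_q.$$
Since $\sigma_\lambda$ is dominant with code $\lambda$, the code of $1^x \times \sigma_\lambda$ is $(0^x, \lambda_1, \ldots, \lambda_\ell)$, and by Definition~\ref{d:schubs} the Schubert polynomial is $\sum_{D \in \rp(1^x \times \sigma_\lambda)} x^D$. The reduced pipe dreams for this permutation are supported in a region obtained by shifting the Young diagram of $\lambda$ downward by $x$ rows. The works of Lenart~\cite{Lenart.2004} and Serrano--Stump~\cite{Serrano.Stump.FPSAC, Serrano.Stump.2012} supply a weight-preserving bijection between $\rp(1^x \times \sigma_\lambda)$ and the set of reverse plane partitions of shape $\lambda$ with entries in $[0,x]$. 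Under this bijection, the row index of each crossing in a pipe dream $D$ corresponds to an entry of the associated RPP $P$ plus a fixed offset coming from that box's row in $\lambda$; summing these offsets over all boxes of $\lambda$ yields precisely $b(\lambda) = \sum_i (i-1)\lambda_i$. Specializing $x_i \to q^{i-1}$ then contributes $q^{b(\lambda) + |P|}$ for each pipe dream/RPP pair, and summing over $P$ yields $q^{b(\lambda)} [rpp^\lambda(x)]_q$.

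The main obstacle I anticipate is the careful bookkeeping in the Lenart/Serrano--Stump bijection---in particular verifying that the accumulated row offsets are exactly $b(\lambda)$ and that the entries of $P$ match the remaining increments in row indices. Once that matching is confirmed, the complete bijective proof is the composition of three bijections: the shift $\varphi$ on reduced words, the Macdonald map $M$ of Algorithm~\ref{algorithm:mac} (whose $q$-weight-preserving property is the content of Theorem~\ref{t:macdonald.q.analog}), and the Lenart/Serrano--Stump pipe-dream-to-RPP correspondence.
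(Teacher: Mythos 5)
Your proposal is correct and follows essentially the same route as the paper: the first equality is reduced to Theorem~\ref{t:macdonald.q.analog} (via the shift identifying $R(\sigma_\lambda)$ with $R(1^x\times\sigma_\lambda)$ and the bijective Macdonald map), and the second equality is delegated to the Lenart/Serrano--Stump correspondence between reduced pipe dreams and $x$-bounded reverse plane partitions, with the $\q^{b(\lambda)}$ factor accounted for by the row-index offsets. The only difference is one of detail: the paper unpacks the Serrano--Stump bijection explicitly (Edelman--Greene column insertion, the flagged tableau $I_D$, then subtracting $u$ from row $u$ to get $K_D$), whereas you treat it as a cited black box.
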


The first equality is given by Macdonald's $\q$-formula.
The second follows from the theorem of Wachs~\cite{wachs}
proving that for every vexillary permutation $\pi$, its
Schubert polynomial is a flagged Schur function of shape
determined by sorting the Lehmer code of $\pi$.  (Dominant
permutations are vexillary). Using our bijection for
Macdonald's formula, we can now give a complete bijective
proof of Theorem~\ref{t:fomin.kirillov.2} as requested in
\cite[Open Problem~1]{Fomin-Kirillov}.

\begin{proof} Fix a partition $\lambda$ and $x \in \mathbb{N}$.   We
  construct a bijection $FK$ from bounded pairs for $\sigma_{\lambda}$
  to the set of
sub-staircase
 words of length $|\lambda|$ times the set of reverse plane partitions
  for $\lambda$ bounded by $x$ as follows.

\begin{enumerate}
\item Given a bounded pair $(\mathbf{a},\mathbf{b})$ for
  $\sigma_{\lambda}$, let $(\mathbf{c},D) =
  \MacdonaldMap(\mathbf{a},\mathbf{b})$ be the corresponding
  $\cd$-pair using the Macdonald Map specified in
  Section~\ref{s:bij.proof}.

\item From $D$, read the vectors of row numbers
    $\mathbf{i}_D=(i_1,\ldots,i_p)$ and diagonal numbers
    $\mathbf{r}_{D}$, as described in
    Section~\ref{s:trans}.  (Note the contrast with
    earlier proofs, where we used column numbers; the
    vector $\mathbf{i}_D$ is sometimes called a
    compatible sequence for $\mathbf{r}_{D}$ -- see
    \cite{BJS}.)

\item  Let $(P_D,Q_D)$ be the insertion tableau and
    recording tableau of the Edelman-Greene bijection
    \cite{edelman-greene} applied to the reduced word
    $\mathbf{r}_{D}$.  Let $(P_D^T,Q_D^T)$ be the
    transposes of these tableaux. (In the terminology of
    \cite{Serrano.Stump.2012}, this is Edeleman-Greene
    ``column insertion''.)

\item Let $I_D=\mathbf{i}_D\circ Q_D^T$ be the tableau
    with the same shape as $Q_D^T$ in which the entry $t$
    replaced with $i_t$, for each $t=1,\ldots,p$. By
    \cite[Theorem~3.3]{Serrano.Stump.2012}, the map $D
    \mapsto I_D$ is a weight preserving bijection from
    reduced pipe dreams for $\sigma_{\lambda}$ to column
    strict tableaux of shape $\lambda$ with row bounds
    $(1+x, 2+x, 3+x, \ldots)$.  Call this family of
    $x$-flagged tableaux $\mathcal{FT}(\lambda,x)$.
(The terminology of flagged tableaux is related to flagged Schur functions, see \cite{wachs}.)



\item From the $x$-flagged tableau $I_D$, construct the
    filling $K_D$ by subtracting $u$ from every entry in
    row $u$. Note that the rows and columns are weakly
    increasing in $K_D$ and every entry is in the
    interval $[0,x]$, so $K_D$ is a reverse plane
    partition.  Serrano and Stump prove in
    \cite{Serrano.Stump.FPSAC} that this is the bijection
    used by Fomin and Kirillov in
    Theorem~\ref{t:fomin.kirillov.2} for the second
    equality.
\end{enumerate}

The resulting map $FK: (\mathbf{a},\mathbf{b}) \to
(\mathbf{c}, K_D)$ is a bijection since each step is a
bijection.  It remains only to show that the $\q$-weight is
preserved. This follows from our bijective proof of
Theorem~\ref{t:macdonald.q.analog} and the fact that
specializing $x_i$ to $\q^{i-1}$ in the Schubert polynomial
 specializes $x^T$ to $\q^{b(\lambda)} \q^{|K_D|}$.
\end{proof}

\section{Future Directions}\label{s:future}

We briefly mention some related open problems and connections to the
literature here.
Recall that pipe dreams can be encoded as bounded pairs, but most
bounded pairs do not encode pipe dreams.  In fact, in
Section~\ref{s:trans}, we gave a simple test for this in terms of
lexicographic order on certain related pairs.  Perhaps there is
another statistic based on these pairs which could be added to
Macdonald's formula to find another generalization.

\begin{open}
  Is there an analog of the bounded pair polynomial in
  Definition~\ref{d:bounded.pair.poly} which specializes to the
  Schubert polynomial when certain parameters are set to 0?  Is there
  a common generalization for the Transition Equation for Schubert
  polynomials, bounded pairs, and its $\q$-analog?
\end{open}

Proctor's formula for plane partitions of staircase shape has a particularly
nice factored form.  This was key to the elegant formula in
\eqref{e:fomin.kirillov.formula}.  Can the staircase shape be
replaced, in any sense, with a more general partition $\lambda$?

In some sense the answer is ``no''.  There exist rather general determinantal
formulas for the partition function of the dimer model on a planar bipartite
graph~\cite{kasteleyn}, and for ensembles of nonintersecting lattice paths in a
directed acyclic graph~\cite{gessel-viennot}; it is famously possible to apply
either formula to yield a determinantal formula for reverse plane partitions of
arbitrary shapes (see, for instance, the book~\cite{bressoud} for an
introduction to this approach to plane partition enumeration).  As observed
in~\cite{Fomin-Kirillov}, typically this determinant cannot be written as a
product of nice factors.  Nonetheless, the more general Fomin-Kirillov formula
in Theorem~\ref{t:fomin.kirillov.2} makes it desirable to improve these
enumerative results as much as possible.

\begin{open}
Is there a nice formula for $|rpp^{\lambda}(x)|$ or
$[rpp^{\lambda}(x)]_\q$ for 
any large class of partitions $\lambda$,
as in the case of staircase shapes as noted in Theorem~\ref{t:fomin.kirillov}?
\end{open}



Stembridge~\cite[Thm. 1.1]{stembridge} gives a formula for a weighted
enumeration of maximal saturated chains in the Bruhat order for any
Weyl group which is very similar to Macdonald's formula.  This formula
is related to the study of degrees of Schubert varieties,
see~\cite{chevalley, postnikov-stanley}, and has no obvious direct
connection to Theorem~\ref{t:macdonald}.  Stanley \cite[Equation
(23)] {Stanley.perms} stated the following version of Stembridge's
weighted enumeration formula in the case of $S_{n}$ and noted the
similarity to Macdonald's formula.  Given $w \in S_n$ of length $p$,
let
\[
\begin{split}
\mathcal{T}(w) := \Bigl\{&\bigl((i_1, j_1), (i_2, j_2), \ldots, (i_p, j_p)\bigr) \;:\; w =
t_{i_1,j_1}t_{i_2,j_2}\cdots t_{i_p,j_p} \\ &\quad\text{and }
\ell(t_{i_1,j_1}t_{i_2,j_2}\cdots t_{i_k,j_k}) = k \text{ for all } 1 \leq k
\leq p \Bigr\}.
\end{split}
\]
\begin{thm}
For $w=w_0 \in S_n$,  we have
\begin{equation}
\label{eqn:mysterious}
\sum_{((i_1, j_1), (i_2, j_2), \ldots, (i_p, j_{p})) \in \mathcal{T}(w_0)}
(j_1-i_1)(j_2-i_2) \cdots(j_{p} - i_{p}) = \binom{n}{2}!.
\end{equation}
\end{thm}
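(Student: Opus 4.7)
The plan is to recognize the left side of~\eqref{eqn:mysterious} as a Schubert-calculus coefficient in the cohomology ring $H^*(G/B)$ of the complete flag variety, and then to evaluate it via a classical identity. Set $p = \binom{n}{2}$ and $\Phi := \sigma_{s_1} + \cdots + \sigma_{s_{n-1}} \in H^*(G/B)$, where $\sigma_w$ denotes the Schubert class dual to $X_w$.

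First I would apply Monk's rule iteratively. Monk's formula states
$$\sigma_{s_r} \cdot \sigma_v = \sum_{\substack{t_{ij}:\, v \lessdot v t_{ij} \\ i \leq r < j}} \sigma_{v t_{ij}},$$
so expanding the product $\Phi^p \cdot \sigma_{\id}$ and summing over $r$ at each step yields
$$[\sigma_{w_0}]\,\Phi^p = \sum_{((i_k, j_k)) \in \mathcal{T}(w_0)} \prod_{k=1}^{p}(j_k - i_k),$$
since for each cover $(i_k, j_k)$ in a maximal Bruhat chain to $w_0$, exactly $j_k - i_k$ values of $r$ satisfy $i_k \leq r < j_k$. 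This identifies the left side of~\eqref{eqn:mysterious} as a Schubert structure constant.

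Next I would pass to the Borel presentation $H^*(G/B) \cong \mathbb{Z}[x_1, \ldots, x_n]/I_S$, where $I_S$ is the ideal of symmetric polynomials of positive degree. Under this isomorphism $\sigma_{s_r}$ is represented by $\mathfrak{S}_{s_r} = x_1 + \cdots + x_r$, and summing gives
$$\Phi \equiv \rho := (n-1)x_1 + (n-2)x_2 + \cdots + x_{n-1} \pmod{I_S},$$
the type-$A_{n-1}$ Weyl vector. The coefficient $[\sigma_{w_0}] f$ of any homogeneous polynomial $f$ of degree $p$ can then be extracted via the longest divided-difference operator: $[\sigma_{w_0}]f = \partial_{w_0}(f)$. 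Hence the left side of~\eqref{eqn:mysterious} equals $\partial_{w_0}(\rho^p)$.

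The main step is the classical identity $\partial_{w_0}(\rho^p) = p!$. This can be proved by induction on $n$ using the factorization $\partial_{w_0^{(n)}} = \partial_{w_0^{(n-1)}} \cdot \partial_{s_{n-1} s_{n-2} \cdots s_1}$ together with the Leibniz rule $\partial_{s_i}(fg) = \partial_{s_i}(f)\, g + s_i(f)\, \partial_{s_i}(g)$, and alternatively follows from the Weyl dimension formula applied to the trivial representation. The hard part will be carrying out this inductive calculation cleanly, since the multinomial expansion of $\rho^p$ produces many non-symmetric contributions that must be tracked carefully through the divided-difference calculus. An alternative proof more in keeping with the combinatorial spirit of this paper would construct a direct bijection between labeled Bruhat chains to $w_0$ (with labels $\ell_k \in \{1, \ldots, j_k - i_k\}$) and bounded pairs for $w_0$; since $\mathfrak{S}_{w_0}(1,\ldots,1) = 1$ for the dominant permutation $w_0$, Theorem~\ref{t:macdonald} would then yield the required count of $p!$ for both sides without explicit appeal to cohomology.
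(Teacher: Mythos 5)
The paper does not actually prove this theorem: it is quoted from Stembridge and Stanley as motivation for an open problem (the paper explicitly states that a bijective proof in the spirit of $\MacdonaldMap$ is not known). Your cohomological route is essentially the classical proof that the paper's references to Chevalley and Postnikov--Stanley allude to, and its skeleton is sound: iterating Monk's rule on $\Phi=\sum_r\sigma_{s_r}$ correctly produces the weight $\prod_k(j_k-i_k)$ over saturated chains, the top-degree coefficient extraction $[\sigma_{w_0}]f=\partial_{w_0}(f)$ is standard, and $\partial_{w_0}(\rho^p)=p!$ is the degree of the flag variety in its $\rho$-embedding. Two caveats. First, the key identity $\partial_{w_0}(\rho^p)=p!$ is asserted rather than proved; your phrase ``Weyl dimension formula applied to the trivial representation'' is not quite right --- what one actually uses is the leading asymptotics of $\dim V_{m\rho}$ as $m\to\infty$, or more directly the antisymmetrization formula
$$\partial_{w_0}\bigl(\textstyle(\sum_i\lambda_ix_i)^p\bigr)\;=\;p!\prod_{i<j}\frac{\lambda_i-\lambda_j}{j-i},$$
obtained by comparing lowest-degree terms in $\det(e^{t\lambda_ix_j})$; setting $\lambda_i=n-i$ makes the product equal to $1$. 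Writing that out would close the gap more cleanly than the Leibniz-rule induction you propose. Second, your suggested ``combinatorial alternative'' --- a direct bijection from labeled Bruhat chains to bounded pairs --- is precisely the paper's open problem and cannot serve as a fallback; in particular no Little-bump analogue for maximal Bruhat chains is known, and the identity is only known for $w_0$, so there is no transition-style recursion available. With the divided-difference computation supplied, your argument is a complete and correct proof, though an algebraic rather than bijective one.
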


\begin{open} 
Can~\eqref{eqn:mysterious} be proven bijectively, using a similar
technique to our $\MacdonaldMap$  bijection?
\end{open}

The left  side of~\eqref{eqn:mysterious} has a natural
interpretation in terms of pairs $(\bfa, \mathbf{b})$ where $\bfa$ is
a word of transpositions $(i_k,j_k)$, and $\mathbf{b} = (b_1, \ldots,
b_p)$ is a sort of ``bounded word'' with bounds $i_k \leq b_k <
j_k$.  However, no analogue of Little's bumping map is known for
maximal saturated Bruhat chains.  Worse,~\eqref{eqn:mysterious} is
only known to hold for the longest word $w_0$.  It would be necessary
to find a generalization of~\eqref{eqn:mysterious} to other $w$ before
the strategy outlined in this paper could apply.

The Grothendieck polynomials are the $K$-theory analog of Schubert polynomials
for the flag manifolds \cite{lascoux.schutzenberger.grothendieck}.  There is a
Transition Formula for these polynomials
\cite{lascoux.grothendieck.transition}.  Anders Buch asked the following
question.

\begin{open}
What is the analog of Macdonald's formula for Grothendieck polynomials
and what is the corresponding bijection?
\end{open}
Fomin-Kirillov~\cite{fomin-kirillov-yang-baxter} state a Macdonald-type formula
for the longest word $w_0$, as a corollary of their work on the degenerate Hecke
algebra.  Curiously the Stirling numbers of the second kind appear on the right
side of the formula.  There are also some partial recent partial results on
this open problem due to Reiner, Tenner and Yong \cite{RTY.2016}.  In
particular, see their Definition 6.2 and Conjecture 6.3.

\section{Appendix}\label{s:appendix}

For $\mathbf{a} = (4)$ with $\comaj(\mathbf{a}) = 0$ and $j=5$,  we have $v^5(\mathbf{a})=(4, 3)$.
$$\begin{array}{|c|c|c|c|c|c|c|}
\hline
i & h^5_i(\mathbf{a}) &  \text{insert}  & y_i^5(\mathbf{a}) & \comaj(y_i^5(\mathbf{a})) &h^5_i(\mathbf{a})-1  & v^5_i(\mathbf{a}) \\
\hline
0 & 5 & (4 4) & (5 4) & 0  & 4 & 4
\\
1 & 4 & (4 3) & (5 4) & 0  & 3 & 3
\\
\hline
\end{array}
$$

For $\mathbf{a} = (4, 3)$ with $\comaj(\mathbf{a}) = 0$ and $j=5$,  we have $v^5(\mathbf{a})=(4, 3, 2)$.
$$\begin{array}{|c|c|c|c|c|c|c|}
\hline
i & h^5_i(\mathbf{a}) &  \text{insert}  & y_i^5(\mathbf{a}) & \comaj(y_i^5(\mathbf{a}))  &h^5_i(\mathbf{a})-1  & v^5_i(\mathbf{a}) \\
\hline
0 & 5 & (4 4 3) & (5 4 3) & 0 & 4 & 4
\\
1 & 4 & (4 3 3) & (5 4 3) & 0 & 3 & 3
\\
2 & 3 & (4 3 2) & (5 4 3) & 0 & 2 & 2
\\
\hline
\end{array}
$$

For $\mathbf{a} = (4, 3, 5)$ with $\comaj(\mathbf{a}) = 2$ and $j=5$,  we have $v^5(\mathbf{a})=(5, 4, 3, 2)$.
$$\begin{array}{|c|c|c|c|c|c|c|}
\hline
i & h^5_i(\mathbf{a}) &  \text{insert}  & y_i^5(\mathbf{a}) & \comaj(y_i^5(\mathbf{a})) &h^5_i(\mathbf{a})-1  & v^5_i(\mathbf{a}) \\
\hline
0 & 5 & (4 4 3 5) & (5 4 3 5) & 3  & 4 & 5
\\
1 & 4 & (4 3 3 5) & (5 4 3 5) & 3  & 3 & 4
\\
2 & 3 & (4 3 2 5) & (5 4 3 5) & 3  & 2 & 3
\\
3 & 3 & (4 3 5 2) & (5 4 5 3) & 2  & 2 & 2
\\
\hline
\end{array}
$$

For $\mathbf{a} = (4, 3, 5, 6)$ with $\comaj(\mathbf{a}) = 5$ and $j=5$,  we have $v^5(\mathbf{a})=(6, 5, 4, 3, 2)$.
$$\begin{array}{|c|c|c|c|c|c|c|}
\hline
i & h^5_i(\mathbf{a}) &  \text{insert}  & y_i^5(\mathbf{a}) & \comaj(y_i^5(\mathbf{a})) &h^5_i(\mathbf{a})-1  & v^5_i(\mathbf{a}) \\
\hline
0 & 5 & (4 4 3 5 6) & (5 4 3 5 6) & 7  & 4 & 6
\\
1 & 4 & (4 3 3 5 6) & (5 4 3 5 6) & 7  & 3 & 5
\\
2 & 3 & (4 3 2 5 6) & (5 4 3 5 6) & 7  & 2 & 4
\\
3 & 3 & (4 3 5 2 6) & (5 4 5 3 6) & 6  & 2 & 3
\\
4 & 3 & (4 3 5 6 2) & (5 4 5 6 3) & 5  & 2 & 2
\\
\hline
\end{array}
$$

For $\mathbf{a} = (4, 3, 5, 6, 4)$ with $\comaj(\mathbf{a}) = 5$ and $j=5$,  we have $v^5(\mathbf{a})=(6, 5, 4, 3, 7, 2)$.
$$\begin{array}{|c|c|c|c|c|c|c|}
\hline
i & h^5_i(\mathbf{a}) &  \text{insert}  & y_i^5(\mathbf{a}) & \comaj(y_i^5(\mathbf{a}))  &h^5_i(\mathbf{a})-1  & v^5_i(\mathbf{a}) \\
\hline
0 & 5 & (4 4 3 5 6 4) & (5 4 3 5 6 4) & 7 & 4 & 6
\\
1 & 4 & (4 3 3 5 6 4) & (5 4 3 5 6 4) & 7 & 3 & 5
\\
2 & 3 & (4 3 2 5 6 4) & (5 4 3 5 6 4) & 7 & 2 & 4
\\
3 & 3 & (4 3 5 2 6 4) & (5 4 5 3 6 4) & 6 & 2 & 3
\\
4 & 3 & (4 3 5 6 2 4) & (5 4 5 6 3 4) & 10 & 2 & 7
\\
5 & 3 & (4 3 5 6 4 2) & (4 3 5 6 4 3) & 5 & 2 & 2
\\
\hline
\end{array}
$$

For $\mathbf{a} = (4, 3, 5, 6, 4, 3)$ with $\comaj(\mathbf{a}) = 5$ and $j=5$,  we have $v^5(\mathbf{a})=(6, 5, 4, 3, 7, 8, 9)$.
$$\begin{array}{|c|c|c|c|c|c|c|}
\hline
i & h^5_i(\mathbf{a}) &  \text{insert}  & y_i^5(\mathbf{a}) & \comaj(y_i^5(\mathbf{a})) &h^5_i(\mathbf{a})-1  & v^5_i(\mathbf{a}) \\
\hline
0 & 5 & (4 4 3 5 6 4 3) & (5 4 3 5 6 5 4) & 7 & 4 & 6
\\
1 & 4 & (4 3 3 5 6 4 3) & (5 4 3 5 6 5 4) & 7 & 3 & 5
\\
2 & 3 & (4 3 2 5 6 4 3) & (5 4 3 5 6 5 4) & 7 & 2 & 4
\\
3 & 3 & (4 3 5 2 6 4 3) & (5 4 5 3 6 5 4) & 6 & 2 & 3
\\
4 & 3 & (4 3 5 6 2 4 3) & (5 4 5 6 3 5 4) & 10 & 2 & 7
\\
5 & 3 & (4 3 5 6 4 2 3) & (5 4 5 6 5 3 4) & 11 & 2 & 8
\\
6 & 4 & (4 3 5 6 4 3 3) & (5 4 5 6 5 3 4) & 11 & 3 & 9
\\
\hline
\end{array}
$$

For $\mathbf{a} = (4, 3, 5, 6, 4, 3, 5)$ with $\comaj(\mathbf{a}) = 11$ and $j=5$,  we have $v^5(\mathbf{a})=(7, 6, 5, 4, 8, 9, 10, 3)$.
$$\begin{array}{|c|c|c|c|c|c|c|}
\hline
i & h^5_i(\mathbf{a}) &  \text{insert}  & y_i^5(\mathbf{a}) & \comaj(y_i^5(\mathbf{a})) &h^5_i(\mathbf{a})-1  & v^5_i(\mathbf{a}) \\
\hline
0 & 5 & (4 4 3 5 6 4 3 5) & (5 4 3 5 6 5 4 5) & 14 & 4 & 7
\\
1 & 4 & (4 3 3 5 6 4 3 5) & (5 4 3 5 6 5 4 5) & 14 & 3 & 6
\\
2 & 3 & (4 3 2 5 6 4 3 5) & (5 4 3 5 6 5 4 5) & 14 & 2 & 5
\\
3 & 3 & (4 3 5 2 6 4 3 5) & (5 4 5 3 6 5 4 5) & 13 & 2 & 4
\\
4 & 3 & (4 3 5 6 2 4 3 5) & (5 4 5 6 3 5 4 5) & 17 & 2 & 8
\\
5 & 3 & (4 3 5 6 4 2 3 5) & (5 4 5 6 5 3 4 5) & 18 & 2 & 9
\\
6 & 4 & (4 3 5 6 4 3 3 5) & (5 4 5 6 5 3 4 5) & 18 & 3 & 10
\\
7 & 4 & (4 3 5 6 4 3 5 3) & (4 3 5 6 4 3 5 4) & 11 & 3 & 3
\\
\hline
\end{array}
$$

\section*{Acknowledgments}\label{s:ack}
Many thanks to Connor Ahlbach, Sami Assaf, Anders Buch, Sergey Fomin,
Ira Gessel, Zachary Hamaker, Avi Levy, Peter McNamara, Maria Monks Gillespie, Alejandro Morales, Richard Stanley, Dennis
Stanton, Joshua Swanson,  and Marisa Viola for helpful discussions on this work.

\bibliographystyle{siam} \bibliography{mrwf}

\end{document}